\documentclass[a4paper]{amsart}

\usepackage[english]{babel}
\selectlanguage{english}

\usepackage[colorlinks]{hyperref}
\usepackage{multicol}
\usepackage{multirow}
\usepackage{graphicx}

\usepackage{amsmath}

\usepackage[latin1]{inputenc}
\usepackage{amsthm}
\usepackage{amsfonts}
\usepackage{enumitem}

\newtheorem{theorem} {Theorem}[section]

\newtheorem{lemma}[theorem]{Lemma}

\newtheorem{remark}[theorem]{Remark}

\newtheorem{definition}[theorem]{Definition}

\newcommand{\torre}[2]{\begin{tabular}{c} #1 \\ #2 \end{tabular}}

\newcommand{\conv}  {\operatorname{conv} }
\newcommand{\vol}  {\operatorname{vol} }
\newcommand{\Z}{\mathbb{Z}}
\newcommand{\N}{\mathbb{N}}
\newcommand{\R}{\mathbb{R}}

\setlength{\tabcolsep}{3pt}
\setlength{\arraycolsep}{3pt}

\setlength{\textfloatsep}{10pt plus 1.0pt minus 2.0pt}


\begin{document}

\title{Lattice $3$-polytopes with six lattice points}
\author{M\'onica Blanco}
\author{Francisco Santos}
\thanks{Supported by grants MTM2011-22792 (both authors) and BES-2012-058920 (M.~Blanco) of the Spanish Ministry of Science.}
\maketitle

\setcounter{tocdepth}{1}
\tableofcontents
 
\begin{abstract}
In a previous paper we proved that for each $n$ there is only a finite number of (equivalence classes of) $3$-polytopes of lattice width larger than one and with exactly $n$ lattice points. We classified them for $n=5$ showing that there are exactly 9 different polytopes of width $2$, and none of larger width  (For $n=4$ it is a classical result of White that all empty tetrahedra have width one).

Here, we look at $n=6$. We show that there are $74$ polytopes of width $2$, two polytopes of width $3$, and none of larger width. We give explicit coordinates for representatives of each class, together with other invariants such as their \emph{oriented matroid} (or \emph{order type}) and volume vector. For example, according to the number of interior points these $76$ polytopes divide into $23$ tetrahedra with two interior points (\emph{clean} tetrahedra), $49$ polytopes with one interior point (the $49$ \emph{canonical} three-polytopes with five boundary points previously classified by Kasprzyk) and only $4$ \emph{hollow} polytopes (two tetrahedra, one quadrangular pyramid and one triangular bipyramid).

We also give a complete classification of three-polytopes of width one with $6$ lattice points. In terms of the oriented matroid of these six points, they lie in eight infinite classes and twelve individual polytopes.

Our motivation comes partly from the concept of \emph{distinct pair sum} (or dps) polytopes, which, in dimension $3$, can have at most $8$ lattice points. Among the $74+2$ classes mentioned above, exactly $44 + 1$ are dps. 
\end{abstract}
\bigskip

\keywords{\textbf{Keywords:} Lattice polytopes, unimodular equivalence, lattice points.}

\newpage

\section{Introduction}

A \emph{lattice polytope} $P$ is the convex hull of a finite set of points in $\Z^d$ (or in a $d$-dimensional lattice).
$P$ is \emph{$d$-dimensional}, or a  \emph{$d$-polytope}, if it contains $d+1$ affinely independent points. 
 We call \emph{size} of $P$ its number $\#(P\cap \Z^d)$ of lattice points and \emph{volume} of $P$ its volume normalized to the lattice (that is, $d+1$ points form a simplex of volume one if and only if they are an affine lattice basis). More formally, 
\[
\vol(\conv\{p_1, \dots, p_{d+1}\}):=\left |\det \left( \begin{array}{ccc}
1 &\dots&1\\
p_1 &\dots&p_{d+1}\\
\end{array}
 \right)
 \right |
\]
with $p_i \in \Z^d$.

The \emph{width} of a lattice polytope is the minimum of $\max_{x\in P}f(x) -\min_{x\in P} f(x)$, over all possible (non-constant) choices of an integer linear functional $f:\Z^d\to \Z$. In particular, $P$ has \emph{width one} if its vertices lie in two parallel and consecutive lattice hyperplanes.

Two lattice polytopes $P$ and $Q$ are said \emph{$\Z$-equivalent} or \emph{unimodularly equivalent} if there is an affine integer unimodular transformation $t:\Z^d\to \Z^d$ with $t(P)=Q$. We call such a transformation a \emph{$\Z$-equivalence}. Volume, width, and size are obviously
invariant modulo $\Z$-equivalence.

\medskip

In dimension $2$, once we fix an $n\in \N$, there are finitely many $\Z$-equivalence classes of lattice $2$-polytopes of size $n$. In dimension $3$, in contrast, there are infinitely many classes for each size $n\ge 4$. Still, combining previous results it is easy to show that:

\begin{theorem}[\protect{\cite[Corollary 1.1]{5points}}]
\label{thm:finitewidth>1}
There are finitely many lattice $3$-polytopes of width greater than one for each size $n$.
\end{theorem}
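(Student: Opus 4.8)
The plan is to reduce the statement to a bound on volume: since for every fixed dimension there are only finitely many $\Z$-equivalence classes of lattice polytopes whose normalized volume is below a prescribed constant (a classical fact, going back to Hensley and to Lagarias and Ziegler), it is enough to bound $\vol(P)$ by a function of $n$ alone. I would split into two cases, according to whether $P$ has an interior lattice point.

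If $P$ has at least one interior lattice point, then it has $k$ of them for some $1\le k\le n-4$, since its (at least four) vertices lie on $\partial P$. Hensley's theorem --- in the sharp form of Lagarias and Ziegler --- bounds $\vol(P)$ from above by an explicit function of the dimension and of $k$, hence by a function of $n$.

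The substantial case is that of a \emph{hollow} $P$ (no interior lattice point). Here one invokes the classification of hollow lattice $3$-polytopes (Averkov--Wagner--Weismantel; Nill--Ziegler): apart from finitely many exceptions --- which are harmless, since a finite list of polytopes contributes only finitely many of each size --- a hollow lattice $3$-polytope admits a lattice projection $\pi$ onto a lower-dimensional hollow lattice polytope $Q$. Width does not decrease under a lattice projection, so $\operatorname{width}(Q)\ge\operatorname{width}(P)\ge 2$; hence $Q$ is not the unit segment, so it is a hollow lattice polygon of width at least $2$, and the only such polygon up to $\Z$-equivalence is $T:=\conv\{(0,0),(2,0),(0,2)\}$. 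Choosing coordinates so that $\pi$ is the forgetful map $(x,y,z)\mapsto(x,y)$, the polytope $P$ is sandwiched between a concave upper envelope $g$ and a convex lower envelope $h$ over $T$, and $P$ is the convex hull of its six fibres over $T\cap\Z^2$. The fibre over a vertex of $T$ is a face of $P$, hence a lattice segment carrying at most $n$ lattice points of $P$, so $g-h\le n$ there; the fibre over an edge-midpoint also carries at most $n$ lattice points of $P$, which forces the concavity defect of $g$ and the convexity defect of $h$ at that midpoint to be at most about $n$. After normalising by an integral shear one then has $|g|=O(n)$ and $|h|=O(n)$ at all six lattice points of $T$; and since extremising the height over $P$ is achieved at a vertex of $P$, which projects to a lattice point of $T$, the functions $g$ and $h$ attain their extreme values over $T$ at those six points, so $g-h=O(n)$ on all of $T$ and $\vol(P)$, being a constant multiple of $\int_T(g-h)$, is $O(n)$.

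The hard part is this hollow case: both the structural input (that a non-exceptional hollow $3$-polytope projects onto $T$) and the quantitative passage from ``each of the six fibres of $\pi$ carries few lattice points'' to a uniform $O(n)$ bound on the thickness of $P$ over $T$. Convexity of $P$ and the hypothesis $\operatorname{width}(P)\ge 2$ are both essential here --- the conclusion fails for width one, where the Reeve simplices $\conv\{(0,0,0),(1,0,0),(0,1,0),(1,1,k)\}$ have only $4$ lattice points and normalized volume $k$. A more self-contained alternative is to slice $P$ by a width-achieving functional $f$ with $f(P)=[0,w]$, $w\ge 2$: every intermediate slice $P\cap\{f=i\}$ with $0<i<w$ is then two-dimensional, Brunn--Minkowski makes $t\mapsto(\operatorname{area}(P\cap\{f=t\}))^{1/2}$ concave, and $\vol(P)$ is a constant multiple of $\int_0^w\operatorname{area}(P\cap\{f=t\})\,dt$; concavity then bounds this integral in terms of the areas of the integral slices, whose relation to the lattice-point counts is the delicate remaining point, since those slices are rational rather than lattice polygons.
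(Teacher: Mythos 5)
This theorem is not proved in the present paper: it is imported verbatim from \cite[Corollary 1.1]{5points}, so there is no in-text argument to compare against, only the citation. Your reconstruction follows exactly the route that citation rests on (and that the introduction alludes to with ``combining previous results''): Hensley/Lagarias--Ziegler for polytopes with an interior lattice point, and for hollow polytopes the Nill--Ziegler projection theorem, which after discarding the finitely many exceptions and the width-one situations leaves only polytopes projecting onto $T=\conv\{(0,0),(2,0),(0,2)\}$. Your treatment of that last case is correct and does close as sketched: every vertex of $P$ is a lattice point and hence lies over one of the six lattice points of $T$, so $P$ is the convex hull of its six fibres and the extreme heights of $P$ are attained over those six points; the three vertex-fibres are faces, hence lattice segments with integer endpoints and lattice length at most $n-1$, so an integral shear makes the heights there $O(1)$ and $O(n)$; each midpoint-fibre has Euclidean length at most $n+1$ (a segment of length $L$ on a lattice line contains at least $L-1$ lattice points, all belonging to $P$), and convexity of the lower envelope $h$ (resp.\ concavity of $g$) bounds $h$ from above (resp.\ $g$ from below) at the edge midpoints by the vertex values, giving two-sided $O(n)$ bounds at all six points, hence $g-h=O(n)$ on $T$ and $\vol(P)=O(n)$; finiteness then follows from the bounded-volume finiteness you invoke at the start. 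The only loose phrasing is the appeal to ``concavity/convexity defects'' at the midpoints, which is just a repackaging of the two facts above, and the Brunn--Minkowski alternative in your last paragraph is unnecessary.
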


So, it makes sense to classify separately, for each $n$, the $3$-polytopes of width one and those of  larger width.  Of width one there are infinitely many, but easy to describe: they consist of two $2$-polytopes of sizes $n_1$ and $n_2$ ($n_1+n_2=n$) placed on parallel consecutive planes (without loss of generality, the planes $z=0$ and $z=1$). 
For each of the two subconfigurations there is a finite number of possibilities, but infinitely many ways to ``rotate'' (in the integer sense, that is via an element of $SL(\Z,2)$) one with respect to the other. 
Of larger width there is none up to $n=4$ (all empty tetrahedra have width one, White~\cite{White}) and there are exactly 9 for $n=5$, all of width two~\cite{5points}. 
Here we completely classify $3$-polytopes of size $n=6$, showing that there are exactly 74 of width two, two of width three, and none of larger width (see precise results below).

\medskip

Our motivation comes partially from the notion of \emph{distinct pair-sum} lattice polytopes (or dps polytopes, for short). A lattice polytope $P$ is called \emph{dps} if all the pairwise sums $a+b$ ($a,b\in P\cap \Z^d$) are distinct. Equivalently, if $P\cap \Z^d$ contains neither three collinear points nor the vertices of a non degenerate parallelogram (\protect{\cite{ChoiLamReznick}}). They are also the lattice polytopes of Minkowski length one, in the sense of~\cite{Beckwith_etal}. 
Dps polytopes of dimension $d$ have size at most $2^d$, since they cannot have two lattice points in the same class modulo $(2\Z)^d$. (Dps $d$-polytopes of size $2^d$ are easy to construct~\cite{ChoiLamReznick}).

In dimension $2$ there are only two dps polytopes: a unimodular triangle, and a triangle of volume three with one interior point. The second one is maximal. 
Partial classification results for dps $3$-polytopes are contained in~\cite{Curcic}, but the following questions of
Reznick (\cite{Reznick-favorite}; see also~\cite[page~6]{ChoiLamReznick}) are open even in this case: 
\begin{itemize}
\item What is the range for the volume of dps polytopes of size $2^d$ in $\R^d$? 
\item Is every dps $d$-polytope a subset of one of size $2^d$?
\item How many  ``inequivalent''  dps polytopes of size $2^d$ are there in $\R^d$? 
\end{itemize}

To answer these  questions for $d=3$ it would be enough to continue the work in this paper to a complete classification of lattice $3$-polytopes of sizes $7$ and $8$. 
We do not know whether that is feasible (at least, it seems to us that new ideas and techniques would be needed) but it has to be noted that adding the dps constraint reduces the number of cases to study quite a bit: for example, Sections~\ref{sec:5coplanar},~\ref{sec:(2,2)coplanarity} and~\ref{sec:(2,1)coplanarity}, and parts of Sections~\ref{sec:width1} and~\ref{sec:(3,1)coplanarity}, of this paper could  be omitted if we only wanted to classify dps polytopes.

\medskip

Our main result can be summarized as follows:

\begin{theorem}
 There are exactly 76 $3$-polytopes of size $6$ and width $>1$. 74 of them have width $2$ and two have width $3$. $44$ and $1$ of those, respectively, are dps. 
\end{theorem}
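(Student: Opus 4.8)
The finiteness of the list is not in question --- it is exactly Theorem~\ref{thm:finitewidth>1} --- so the whole difficulty lies in turning ``finitely many'' into an explicit, complete and irredundant enumeration, which is unavoidably a long case analysis. The plan has four stages: reduce to a genuinely finite search, run that search organized by a combinatorial invariant of the six points, remove $\Z$-equivalent duplicates, and finally certify the width and the dps status of each survivor.

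For the reduction, let $P$ be a lattice $3$-polytope with $\#(P\cap\Z^3)=6$ and width $>1$. Either five of its six lattice points are coplanar --- in which case $P$ is a pyramid over a lattice $2$-polytope with five lattice points, of which there are finitely many, and the search amounts to placing the apex --- or no hyperplane contains five of the points, in which case one checks that any vertex $v$ can be deleted so that $\conv\big((P\cap\Z^3)\setminus\{v\}\big)$ is again a $3$-polytope, necessarily of size $5$. Thus every $P$ of this second kind is obtained by adding one lattice point to a lattice $3$-polytope $P_5$ of size $5$, and $P_5$ either has width $>1$ (one of the $9$ polytopes of~\cite{5points}) or has width $1$ (an explicit two-parameter family of two stacked planar polytopes). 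When $P_5$ has width $>1$ the search is at once finite: monotonicity of width under inclusion forces $\conv(P_5\cup\{v\})$ to have width $>1$, and only finitely many $v$ keep the size down to $6$. When $P_5$ has width $1$ one must instead isolate, inside an infinite family, the finitely many ``twists'' of one planar level against the other, together with the finitely many heights of the added point, that yield width $>1$ with exactly six lattice points; this is precisely the content of the separate treatment of the coplanarity patterns --- the ways the six points can split among a bundle of parallel lattice planes. Each pattern is handled with its own a priori bound and its own enumeration, and the patterns are arranged to be mutually exclusive and collectively exhaustive.

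Running these enumerations --- for each of the finitely many bases, and modulo the residual symmetry (the automorphisms of the base, plus the $SL_2(\Z)$ freedom between the two levels in the width-$1$ case) --- produces a long but finite list of candidate polytopes. To pass to $\Z$-equivalence classes I would compute invariants of each candidate --- the oriented matroid (order type) of the six points, the normalized volume and the volume vector, the numbers of interior and boundary lattice points --- to separate the classes, and for candidates sharing all invariants, either exhibit an explicit unimodular transformation or prove none exists. One expects exactly $76$ classes to survive. Finally, for each representative I would compute the exact width (a finite minimization over a bounded set of functionals), confirming the split into $74$ of width $2$ and two of width $3$ and --- importantly --- checking that the a priori bounds used in the search already rule out width $\ge 4$; and I would test the dps condition directly: $P$ is dps iff its six lattice points contain no three collinear points and no four vertices of a non-degenerate parallelogram, which gives $44$ of the $74$ and $1$ of the $2$.

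The conceptual content here is modest; the real obstacle is completeness and bookkeeping. The genuinely delicate step is the width-$1$ base case: there monotonicity of width is useless (it only yields width $\ge 1$, which we already knew), so nothing comes for free, and one must excise from an infinite family precisely the finite set of twist and height combinations producing width $>1$ and size $6$ --- split cleanly into the (2,1), (2,2), (3,1), $\dots$ coplanarity sub-cases, with the symmetry reductions controlled so that no class is dropped or double-counted. Getting those bounds right, and verifying that the union of all the cases really exhausts every possibility, is the crux of the proof.
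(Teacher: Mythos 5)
Your overall plan is essentially the paper's: split off the five-coplanar-points case, otherwise pass to a size-five subconfiguration via vertex deletion, organize the hard part by the coplanarity patterns of the six points, reduce by symmetry, separate classes by oriented matroid and volume vector, and certify width and the dps condition at the end. But as stated it has a concrete hole in exhaustiveness. Consider a candidate $P$ whose six lattice points are all vertices and in general position (no four coplanar, no interior lattice point). Deleting any vertex leaves a configuration of signature $(3,2)$ (a point interior to a tetrahedron would not be a vertex of $P$), and by Table~\ref{table:5points} every such size-five polytope has width one; so this case lands squarely in your ``width-one base'' branch. Yet it exhibits \emph{no} coplanarity at all, so your proposed organization of that branch ``by the $(2,1)$, $(2,2)$, $(3,1)$, \dots coplanarity sub-cases'' never reaches it, and nothing in your sketch excludes a width-$2$ octahedron or twisted prism with six lattice points. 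The paper disposes of exactly this case by quoting Howe's theorem (Theorem~\ref{thm:Howe}): a lattice $3$-polytope all of whose lattice points are vertices has width one. Without that input (or a substitute argument), your enumeration is not collectively exhaustive and the counts $76 = 74+2$ cannot be certified.

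A second, smaller issue is that in your ``easy'' branch (some vertex deletion has width $>1$) you derive ``only finitely many positions for $v$'' from monotonicity of width plus the finiteness of Theorem~\ref{thm:finitewidth>1}; that finiteness is not effective, so by itself it cannot drive a search. The actual content of the classification is the effective bounds: for instance, the paper shows that the two general-position oriented matroids with interior points each have two vertices whose deletion has signature $(4,1)$, so all such polytopes arise by gluing two of the eight explicit width-two size-five polytopes (a finite, checkable list of gluings), and that a sixth point added across a $(3,1)$-plane must lie at lattice distance $1$ or $3$ from it (Lemma~\ref{lemma:(3,1)thm}), etc. Supplying bounds of this kind in every branch --- rather than appealing to abstract finiteness --- is what turns your outline into a proof.
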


Detailed information about each of these $76$ polytopes is given in Tables~\ref{table:6points-I} and \ref{table:6points-II}. This includes their oriented matroid, volume vector and width. 
Different sections in the tables correspond to the presence or not of certain coplanarities, as summarized in Table~\ref{table:result1} (and to different sections in the paper).
Also, explicit coordinates for a representative of each class are given in Section~\ref{sec:tables}.

\begin{table}[h]
\begin{center}
\begin{tabular}{|c|cc|c|c|}
\hline
\textbf{Description} & \textbf{$\#$ Polys.}&\textbf{ \small (dps)} & \textbf{$\#$ O.~m.'s} & \textbf{Where?}\\
\hline
\hline
$\exists\ 5$ coplanar points & $2$ 
&{\small ($-$)}  & $2$ of $11$ 
&Section~\ref{sec:5coplanar}\\
\hline
\torre{$\exists\ (3,1)$ coplanarity,}{(no $5$ coplanar points)} & $20+1$
 &{\small ($13$)}  &$13$ of $20$ 
 &Section~\ref{sec:(3,1)coplanarity} \\
\hline
\torre{$\exists\ (2,2)$ coplanarity}{(but none of  above)} & $4$ 
&{\small ($-$)}  &$2$ of $15$ 
 &Section~\ref{sec:(2,2)coplanarity}  \\
\hline
\torre{$\exists\ (2,1)$ coplanarity}{(but none of  above)} & $17$ 
 &{\small ($-$)}  &$3$ of $5$ 
 &Section~\ref{sec:(2,1)coplanarity}  \\
\hline
\torre{No coplanarity,}{$1$ interior point} & $20$ 
 &{\small ($20$)}  &$1$ of $1$  
 &Section~\ref{sec:2*(4,1)-1} \\
\hline
\torre{No coplanarity,}{$2$ interior points}  & $11+1$ 
&{\small ($11+1$)}  & $1$ of $1$ 
 &Section~\ref{sec:2*(4,1)-2} \\
\hline
\torre{No coplanarity,}{no interior points}  & $0$ &{\small ($0$)}  & $0$ of $2$ &Theorem~\ref{thm:Howe} \\
\hline
\end{tabular}
\caption{Number of lattice $3$-polytopes of size $6$ and width $>1$ according to coplanarities present in them. Where it says ``$x+1$" it means that ``$x$ have width two and one has width three".}
\label{table:result1}
\end{center}
\end{table}

Emptiness in the last line of Table~\ref{table:result1} follows from the following result that Scarf attributes to Howe:

\begin{theorem}[\protect{\cite[Thm.~1.3]{Scarf}}]
\label{thm:Howe}
If all lattice points of a lattice $3$-polytope are vertices then it has width $1$.
\end{theorem}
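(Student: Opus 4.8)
The plan is to combine a description of the facets of $P$ with an induction on $n:=\#(P\cap\Z^3)$, using White's theorem \cite{White} (every empty tetrahedron has width one) as the base case. The first observation is about facets: if every lattice point of $P$ is a vertex, the same is true of each facet $F$, since a vertex of $P$ lying in $F$ is automatically a vertex of $F$. A lattice polygon all of whose lattice points are vertices is hollow, so by Pick's formula its area equals $m/2-1$, where $m$ is its number of vertices; since a convex lattice $m$-gon has area at least $(m-2)/2$, with equality only for a unimodular triangle ($m=3$) or for a polygon $\Z$-equivalent to the unit square ($m=4$), every facet of $P$ is a unimodular triangle or a unit square.

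Next I would dispose of the unit square case directly. Suppose $P$ has a unit square facet, placed as $[0,1]^2\times\{0\}$ with $P\subseteq\{z\ge 0\}$, and suppose some vertex $v$ has $v_z=h\ge 2$. The sub-pyramid $\conv\big([0,1]^2\times\{0\}\cup\{v\}\big)\subseteq P$ meets the plane $z=1$ in the square $[\tfrac{v_x}{h},\tfrac{v_x+h-1}{h}]\times[\tfrac{v_y}{h},\tfrac{v_y+h-1}{h}]$; each of these intervals has length $\tfrac{h-1}{h}<1$ and contains an integer, because exactly one of the $h$ consecutive integers $v_x,\dots,v_x+h-1$ (resp.\ $v_y,\dots,v_y+h-1$) is divisible by $h$. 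The resulting lattice point of $P$ at level $1$ lies in the relative interior of a segment joining $v$ to a point of $[0,1]^2\times\{0\}$, hence is not a vertex of $P$ — a contradiction. So all vertices of $P$ have $z\in\{0,1\}$ and $P$ has width one. From now on I may assume $P$ has no unit square facet, so $P$ is simplicial and all its facets are unimodular triangles.

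Now the induction on $n$. If $n=4$ then $P$ is an empty tetrahedron and White's theorem applies. Let $n\ge 5$. I can choose a vertex $v$ of $P$ for which $P':=\conv\big((P\cap\Z^3)\setminus\{v\}\big)$ is still three-dimensional: if removing a single vertex always yielded a planar point set, then for two such vertices the remaining $n-2\ge 3$ vertices would lie on the intersection line of two distinct planes, impossible in a polytope, so at most one vertex is ``bad'' and at least $n-1\ge 4$ choices of $v$ work. Then $P'$ is a lattice $3$-polytope whose lattice points are exactly the $n-1$ points of $(P\cap\Z^3)\setminus\{v\}$, all of them still vertices, so by the inductive hypothesis $P'$ has width one: there is a primitive functional $f$ with $f(P')\subseteq[0,1]$. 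If $f(v)\in\{0,1\}$ then $f(P)\subseteq[0,1]$ and we are done; otherwise, after replacing $f$ by $1-f$ if needed, $f(v)=h\ge 2$. Then $P\cap\Z^3=B_0\cup B_1\cup\{v\}$, where $B_i$ is the (non-empty, since $P'$ is three-dimensional) set of lattice points at level $i$; here $\conv(B_0)$ is a unimodular triangle, a primitive segment, or a point (a face of $P$ that is not a unit square), $\conv(B_1)$ is a unimodular triangle, a unit square, a primitive segment, or a point (a hollow polygon with all lattice points vertices), and $|B_0|+|B_1|\ge 4$. Moreover, for every $b\in B_0\cup B_1$ and every integer level $\ell$ strictly between $h$ and the level of $b$, the point of the segment $[v,b]$ at level $\ell$ — namely $\tfrac{\ell v+(h-\ell)b}{h}$ when $b\in B_0$ and $\tfrac{(\ell-1)v+(h-\ell)b}{h-1}$ when $b\in B_1$ — must fail to be integral, since otherwise it is a non-vertex lattice point of $P$. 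The remaining task is to show that these divisibility conditions, together with the short list of possible slices $\conv(B_0),\conv(B_1)$ and the requirement that every facet of $P$ through $v$ be a unimodular triangle, cannot all hold; that contradiction closes the induction.

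The main obstacle is precisely this last incompatibility in the inductive step: one must run through the finitely many shapes of the level-$0$ and level-$1$ slices and the small admissible values of $h$, extracting in each case either a non-vertex lattice point on some segment $[v,b]$ or a non-unimodular facet at $v$. The search is finite — one may first record that $\#(P\cap\Z^3)\le 8$, since two lattice points congruent modulo $2\Z^3$ would have a non-vertex midpoint, so that after the facet classification and the unit-square reduction only a bounded list of combinatorial types survives — but carrying it out cleanly is where the actual effort lies; the conceptual content is all in the facet classification and the unit-square reduction.
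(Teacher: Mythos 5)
There is a genuine gap, and it is not just the admitted unfinished case analysis: the contradiction you defer to the end of the inductive step is false, so the induction cannot be closed as designed. The inductive hypothesis only gives you \emph{some} primitive functional $f$ with $f(P')\subseteq\{0,1\}$, and nothing forces such an $f$ to have $f(v)\in\{0,1\}$ even when $P$ does have width one (with respect to a \emph{different} functional). Concretely, take $P=\conv\{(0,0,0),(1,0,0),(0,1,0),(0,0,1),(1,1,1)\}$: this is a clean triangular bipyramid whose only lattice points are its five vertices, all of its facets are unimodular triangles (so it survives your unit-square reduction), and it has width one with respect to $z$. Remove $v=(1,1,1)$, so $P'$ is the unit tetrahedron, and choose $f(x,y,z)=x+y$, which is a legitimate width-one functional for $P'$. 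Then $h=f(v)=2$, $B_0=\{(0,0,0),(0,0,1)\}$ and $B_1=\{(1,0,0),(0,1,0)\}$ are primitive edges of $P$, every facet through $v$ is unimodular, and all your divisibility conditions hold (the only relevant points are $\tfrac{1}{2}(v+b)$ for $b\in B_0$, which are not integral). So every constraint you list is simultaneously satisfiable, and no contradiction can be extracted; the correct conclusion in this situation is merely that $P$ has width one in another direction, which your argument does not produce. Repairing this would require either a much stronger inductive statement (controlling \emph{all} width-one directions, or classifying the possible slices globally) or a different argument altogether; as it stands the step ``$f(v)\ge 2$ is impossible'' is the heart of the matter and it is simply not true.

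Two smaller remarks: the facet classification and the unit-square reduction are fine (and the bound $\#(P\cap\Z^3)\le 8$ via classes mod $2\Z^3$ is a good observation), but note that the paper itself does not prove this theorem --- it quotes it from Scarf --- so even a correct completion of your plan would be new material rather than a reconstruction of an argument in the paper.
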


The previous to last column of Table~\ref{table:result1}, and the first column of Tables~\ref{table:6points-I}--\ref{table:6points-II}, contains oriented matroid information. This is used both as book-keeping and as a way of to extract useful information from the tables. For this reason we have worked out in Section~\ref{subsec:om} the full classification of oriented matroids of six points in $\R^3$, of which there are $55$. Only $22$ of them are realized by the six lattice points of some lattice polytope of size $6$ and width $>1$. This is indicated in Table~\ref{table:result1} as follows:  we say we have ``$i$ of $j$'' oriented matroids  in a particular row meaning that there are $j$ oriented matroids of six points in $\R^3$ corresponding to the description in that row, but only $i$ of them arise for the six points of a lattice $3$-polytope of size $6$ and width $>1$.

Table~\ref{table:om} in Section~\ref{subsec:om} classifies the $55$ oriented matroids according to the presence or not of the coplanarities of Table~\ref{table:result1}, but also according to their number of vertices and interior points. The latter allows us to easily translate the information from Tables~\ref{table:6points-I}--\ref{table:6points-II} into a classification of lattice polytopes of size six and width $>1$ according to their number of vertices and interior lattice points. This is summarized in Table~\ref{table:result2}. 
Here we call a polytope \emph{clean} if all its boundary lattice points are vertices, \emph{hollow} if it has no interior lattice points, \emph{canonical} if it has exactly one interior point (usually assumed to be the origin) and \emph{terminal} if it is canonical and clean. This usage seems quite established~\cite{KasprzykNill,NillZiegler,Reznick-clean}.
Polytopes with five vertices can have a triangular bipyramid or a quadrangular pyramid as convex hull.
Polytopes with six vertices do not appear in the classification since, by Theorem~\ref{thm:Howe}, they have width one. 

\begin{table}[h]
\begin{center}
\begin{tabular}{|c|c|c|cc|c|}
\hline
\textbf{$\#$vert.} &{\textbf{\torre{$\#$int.}{pts.}}} & \textbf{Description}& \textbf{$\#$ polys.} & \textbf{ \small (dps)} & \textbf{$\#$ O.~m.'s}\\
\hline
\hline
\multirow{3}{*}{$4$} & $2$& clean tetrahedra& $22+1$ 
&{\small ($16+1$)}  & $4$ of $5$\\ 
\cline{2-6}
 &$1$ &canonical tetrahedra & $10+1$ 
 &{\small ($3$)}  & $5$ of $6$\\ 
\cline{2-6}
 &$0$ &hollow tetrahedra& $2$ 
 &{\small ($-$)}  &\,\ $2$ of $10$\\ 
\hline
\multirow{4}{*}{$5$} &\multirow{2}{*}{$1$}& terminal quad.~pyramid & $3$ 
&{\small ($-$)} & $2$ of $3$\\ 
& & terminal tri.~bipyramid & $35$ 
&{\small ($24$)} & $7$ of $8$\\ 
\cline{2-6}
 &\multirow{2}{*}{$0$}& hollow quad.~pyramid & $1$ 
 &{\small ($-$)} & $1$ of $6$\\ 
& & hollow tri.~bipyramid & $1$ 
&{\small ($1$)} & $1$ of $5$\\ 
\hline
6 & 0 & clean \& hollow  & $0$ &{\small ($-$)} &\,\ $0$ of $12$\\ 
\hline
\end{tabular}
\caption{Classification of lattice $3$-polytopes of size six and width $>1$ according to their number of vertices and interior points. As in the previous table, ``$x+1$'' means ``$x$ of width two and one of width three''.}
\label{table:result2}
\end{center}
\vspace{-.5cm}
\end{table}

Let us remark that clean tetrahedra and canonical $3$-polytopes were previously classified, even for numbers of lattice points much greater than six:
\begin{itemize}
\item Kasprzyk has classified all canonical $3$-polytopes (\cite{Kasprzyk}), regardless of their number of lattice points. (That they are finitely many is an instance of Hensley's Theorem (\cite[Thm.~4.3]{Hensley}): for each $k\ge 1$ and fixed dimension $d$ there is a bound on the volume of lattice $d$-polytopes with $k$ interior lattice points). There are $674,688$ of them, with their number of boundary lattice points going up to $38$ and their volume up to $72$. The list is published as a searchable database (\cite{Kasprzyk-database}) in which we have checked that, indeed, there are 49 canonical $3$-polytopes with five boundary points, $38$ of them terminal, in agreement with our classification.
\item For clean tetrahedra with $k$ interior points Pikhurko improved Hensley's Theorem to a volume upper bound of $84.63 k$ (\cite{Pikhurko}). Using this, Curcic classified clean tetrahedra with up to $35$ interior points (\cite{Curcic}). 
In this range the maximum volume that arises is $12k+8$, which had been previously conjectured by Duong for all $k$ (\cite{Duong}). 
For $k=2$ Curcic and Duong (personal communication) found exactly the same $23$ clean tetrahedra with two interior points that we have found.
\end{itemize}

For completeness, in Section~\ref{sec:width1} we also work out a complete classification of lattice $3$-polytopes of size six and width one. There are infinitely many, but  all the infinite series lie in eight particular oriented matroids. There are another twelve individual polytopes, all with different oriented matroid. See Remark~\ref{rem:width1-redundancy}.
\bigskip

\textbf{Acknowledgment:} We thank Bruce Reznick and Han Duong for useful comments and references related to our and their work.

\section{Preliminaries}
\subsection{Volume vectors}
Since $\Z$-equivalence preserves volume, the following \emph{volume vector} is invariant under it:

\begin{definition}
\label{def:VolumeVectors}
Let $A=\{  p_1,p_2, \dots ,p_n  \}$, with $n \ge d+1$, be a set of lattice points in $\Z^d$.
The \emph{volume vector of $A$} is the vector
\[
w= (w_{i_1\dots i_{d+1}})_{1 \le i_1 <\dots <i_{d+1}\le n} \in \mathbb{Z}^{ {n \choose d+1}}
\]
where
\begin{equation}
w_{i_1\dots i_{d+1}}=w_{\{i_1,\dots, i_{d+1}\}}:=\det \left( \begin{array}{ccc}
	1	&\dots	&1\\      
	p_{i_1}	&\dots	&p_{i_{d+1}}\\
\end{array}
\right).
\label{eq:volume}
\end{equation}
\end{definition}

The definition implicitly assumes a specific ordering of the $n$ points in $A$. For six points $\{p_1,\dots,p_6\}$ we always order the entries of the volume vector lexicographically as:
\begin{eqnarray*}
w&=&(w_{1234},w_{1235},w_{1236},w_{1245},w_{1246},w_{1256},w_{1345}, \\
&&w_{1346},w_{1356},w_{1456},w_{2345},w_{2346},w_{2356},w_{2456},w_{3456})
\end{eqnarray*}

\begin{remark}
\label{rmk:(d+2)-volumeVector}
The volume vector encodes, among other things, the unique (modulo a scalar factor) dependence among each set of $d+2$ points $\{p_{1}, \dots ,p_{{d+2}}\}$ that affinely span $\R^d$. This dependence is as follows, where $I_k=\{1, \dots , {d+2}\} \setminus \{k\}$:
\begin{eqnarray*}
 \sum_{k=1}^{d+2} (-1)^{k-1} \cdot w_{I_k} \cdot p_{k}&=&0 \\
  \sum_{k=1}^{d+2}  (-1)^{k-1} \cdot w_{I_k} &=&0
 \end{eqnarray*}
\end{remark}

The volume vector is often, but not always, a \emph{complete} invariant for $\Z$-equivalence:

\begin{theorem}[\protect{\cite[Theorem 2.3]{5points}}]
\label{thm:VolumeVectors}
Let $A=\{ p_1, \dots , p_n  \}$ and $B= \{ q_1, \dots , q_n \}$ be $d$-dimensional subsets of $\Z^d$ and suppose they have the same volume vector $(w_I)_{I \in \binom{[n]}{d+1}}$ with respect to the given ordering. Then:

\begin{enumerate}
\item There is a unique affine map $t: \R^d\to \R^d$ with $t(p_i)=q_i$ for all $i$, and it has $\det(t)=1$.

\item If $\gcd_{I \in \binom{[n]}{d+1}} \ (w_I)=1$, then $t$ has integer coefficients, so it is a $\Z$-equivalence between $P$ and $Q$.

\end{enumerate}
\end{theorem}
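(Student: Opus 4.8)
The plan is to prove the two parts separately. Part~(1) is forced by the affine dependences that Remark~\ref{rmk:(d+2)-volumeVector} reads off from the volume vector, and part~(2) reduces to a Smith-normal-form statement about the matrix of homogenized coordinates of the points.

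\emph{Part (1).} Since $A$ spans $\R^d$ affinely, some coordinate of the volume vector is nonzero, say $w_{I_0}\neq 0$ with $I_0\in\binom{[n]}{d+1}$; to lighten notation I would relabel so that $I_0=\{1,\dots,d+1\}$ (this only permutes the coordinates of the volume vector, with some sign changes, identically for $A$ and $B$, so it is harmless). Because $B$ has the same volume vector, $w_{I_0}\neq 0$ also certifies that $q_1,\dots,q_{d+1}$ are affinely independent, so there is a unique invertible affine map $t\colon\R^d\to\R^d$ with $t(p_i)=q_i$ for $i=1,\dots,d+1$; any affine map sending every $p_i$ to $q_i$ agrees with $t$ on $p_1,\dots,p_{d+1}$ and hence equals $t$, which gives the claimed uniqueness once we know $t$ works for all $i$. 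To see $t(p_j)=q_j$ for $j>d+1$, apply Remark~\ref{rmk:(d+2)-volumeVector} to the $d+2$ points $p_1,\dots,p_{d+1},p_j$ (which affinely span $\R^d$ since $w_{I_0}\neq0$): it writes $p_j=\sum_{k=1}^{d+1}\lambda_k p_k$ with $\sum_k\lambda_k=1$ and each $\lambda_k$ equal to $\pm w_{\{1,\dots,d+1,j\}\setminus\{k\}}/w_{I_0}$, i.e.\ a ratio of coordinates of the volume vector. The identical relation $q_j=\sum_k\lambda_k q_k$ holds with the same $\lambda_k$, and applying the affine map $t$ (using $\sum_k\lambda_k=1$) gives $t(p_j)=\sum_k\lambda_k q_k=q_j$. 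Finally, writing $t(x)=Lx+b$, the linear part $L$ sends the basis $p_2-p_1,\dots,p_{d+1}-p_1$ to $q_2-q_1,\dots,q_{d+1}-q_1$; expanding $w_{I_0}$ by the column operation ``subtract the first column from the others'' shows $\det(p_2-p_1,\dots,p_{d+1}-p_1)=w_{I_0}$ and likewise for the $q$'s, so $\det(L)\cdot w_{I_0}=w_{I_0}$ and therefore $\det(t)=\det(L)=1$.

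\emph{Part (2).} I would pass to homogeneous coordinates. Let $M_A,M_B\in\Z^{(d+1)\times n}$ have $i$-th columns $(1,p_i)$ and $(1,q_i)$, and let $T=\begin{pmatrix}1&0\\ b&L\end{pmatrix}$, so that $TM_A=M_B$ and $\det(T)=\det(L)=1$ by part~(1). The maximal ($(d+1)\times(d+1)$) minors of $M_A$ are, up to sign, exactly the coordinates $w_I$ of the volume vector, so the hypothesis $\gcd_I(w_I)=1$ says, via the Smith normal form of $M_A$, that the columns of $M_A$ generate the full lattice $\Z^{d+1}$. Hence each standard basis vector $e_k\in\Z^{d+1}$ is an integer combination $e_k=\sum_i c^{(k)}_i(1,p_i)$, whence $Te_k=\sum_i c^{(k)}_i(1,q_i)\in\Z^{d+1}$; so $T$ (equivalently, $L$ and $b$) has integer entries. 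Since $\det(T)=1$, $T^{-1}$ is integral as well, so $t$ restricts to a bijection $\Z^d\to\Z^d$, i.e.\ it is a $\Z$-equivalence, and it carries $A$ onto $B$ and hence $P=\conv(A)$ onto $Q=\conv(B)$.

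Most of the work is bookkeeping — the precise signs in Remark~\ref{rmk:(d+2)-volumeVector} and the cofactor expansion identifying $w_{I_0}$ with a genuine $d\times d$ determinant — which I would not carry out in detail. The one conceptual point, and the only place the gcd hypothesis enters, is the equivalence between ``the $\gcd$ of the maximal minors of $M_A$ equals $1$'' and ``the columns of $M_A$ generate $\Z^{d+1}$'', which is precisely what the Smith normal form delivers. Once that is in hand, integrality of $t$ is automatic, because the action of $t$ on a generating set of $\Z^{d+1}$ is prescribed (and integral) by the coincidence of the two volume vectors established in part~(1).
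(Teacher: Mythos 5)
Your argument is correct. Note that this paper does not actually prove the statement: it imports it from the companion paper \cite{5points}, so there is no in-paper proof to compare against; judged on its own, your proposal is a complete and valid proof. Part (1) follows the natural route: a nonzero entry $w_{I_0}$ pins down an affine basis, Remark~\ref{rmk:(d+2)-volumeVector} shows that the affine coordinates of every further point with respect to that basis are ratios of volume-vector entries (so they coincide for $A$ and $B$), and comparing $\det(p_2-p_1,\dots,p_{d+1}-p_1)=w_{I_0}$ with the corresponding determinant for $B$ gives $\det(t)=1$; your remark that the initial relabelling permutes and re-signs the volume vector identically for $A$ and $B$ is exactly the point that makes this harmless. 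For part (2), the key fact you invoke — that for an integer $(d+1)\times n$ matrix the gcd of the maximal minors is $1$ if and only if its columns generate $\Z^{d+1}$ — is the standard determinantal-divisor/Smith-normal-form statement, and once the columns of $M_A$ generate $\Z^{d+1}$, integrality of $T$ (hence of $L$ and $b$), together with $\det(T)=1$, gives integrality of $T^{-1}$ and thus a $\Z$-equivalence carrying $P$ to $Q$. An equally short alternative, closer in flavour to how such statements are often argued, is Cramer's rule: each coefficient $c$ of $t$ satisfies $c\,w_I\in\Z$ for every $I$, so its denominator divides $\gcd_I(w_I)=1$; but your lattice-generation argument buys the same conclusion with no extra case analysis.
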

\medskip

\subsection{The oriented matroid of six points in $\R^3$}
\label{subsec:om}

The oriented matroid of $n$ points in $\R^d$ can be defined as the vector of signs in the volume vector. Hence, it is a $\Z$-equivalence invariant. In this section we give the full list of oriented matroids for six points in $\R^3$.
We assume a certain familiarity with oriented matroid theory at the level covered, for example, in \cite[Chapter 6]{Ziegler} or \cite[Chapter 4]{deLoeraRambauSantos2010}.

What we are looking at are oriented matroids of \emph{rank four} with \emph{six elements}. But not all of them can arise from an affine point configuration. The additional conditions that we need are:
\begin{enumerate}
\item The oriented matroid must be \emph{acyclic}. Put differently, it does not have positive circuits.
\item The oriented matroid must not have any \emph{parallel elements} (circuits of signature $(1,1)$).
\end{enumerate}

To classify these oriented matroids we look at their duals: the oriented matroids with six elements and rank two that are \emph{totally cyclic} and do not have \emph{cocircuits} of type $(1,1)$. These are all representable, so we can picture them as vector configurations in $\R^2$. With an exhaustive procedure we compute the following full list. (We do not claim originality. Rank four o.~m.'s have been  classified up to 10 elements (\cite{Matsumoto-et-al})):
\begin{theorem}
\label{thm:OM}
There are $55$ acyclic oriented matroids of rank four with six elements and without parallel elements. Their duals are represented in Figure~\ref{fig:OM}.
\end{theorem}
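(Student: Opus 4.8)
The plan is to work on the dual side, as the excerpt already suggests, and to reduce the problem to an enumeration of vector configurations in the plane. Given an acyclic rank-four oriented matroid $M$ on six elements without parallel elements, its dual $M^*$ has rank two and six elements, is totally cyclic (dual to acyclic), and has no cocircuit of type $(1,1)$ (dual to the no-parallel-elements condition; a cocircuit of $M^*$ of signature $(1,1)$ corresponds to a circuit of $M$ of that signature, i.e.\ two parallel points). Every rank-two oriented matroid is representable, so $M^*$ is the oriented matroid of a configuration of six nonzero vectors $v_1,\dots,v_6\in\R^2$, considered up to the action of $GL(2,\R)$ (equivalently, up to the combinatorial data of their cyclic order around the origin together with which pairs are antipodal). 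Total cyclicity means the vectors positively span $\R^2$, i.e.\ they are not contained in any closed halfplane; the ``no $(1,1)$ cocircuit'' condition means no open halfplane contains exactly one of the six vectors, equivalently no vector $v_i$ has the property that all the others lie in one of the two closed halfplanes bounded by the line $\R v_i$ with $v_i$ alone on its boundary line's ``far'' side — concretely, every vector must have vectors strictly on both sides of, or antipodal to, the line through it.

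The enumeration itself is then finite and combinatorial. First I would record that a rank-two configuration of six vectors is determined up to reorientation-free isomorphism by (a) the circular sequence of ``directions'' they occupy, where two opposite directions $v$ and $-v$ may or may not both be present, and (b) the multiplicities. Since there are no parallel elements in $M$ there are also no loops or parallel elements to worry about on this side beyond antipodality, so the data is: a cyclic arrangement of the six vectors into some number $k$ of distinct lines through the origin ($1\le k\le 6$), together with, for each line, how the (one or two) vectors on it are split between its two rays. I would organize the count by $k$, or equivalently by the number of pairs of antipodal vectors (from $0$ up to $3$), and within each case enumerate the cyclic patterns compatible with total cyclicity and the halfplane condition. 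This is a short case analysis; each subcase is a finite check, and one keeps only those configurations satisfying both conditions, then removes duplicates under rotation and reflection of the plane. The outcome should be the $55$ claimed types, which I would display exactly as in Figure~\ref{fig:OM}.

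The main obstacle is purely bookkeeping: making sure the case analysis by number of antipodal pairs and by cyclic pattern is genuinely exhaustive and that no two listed configurations are isomorphic (the symmetry group to quotient by is the dihedral action on the plane, plus relabelling of the six elements). A secondary point worth stating carefully is the translation of the two admissibility conditions on $M$ into the halfplane conditions on the $v_i$: acyclicity of $M$ $\Leftrightarrow$ total cyclicity of $M^*$ $\Leftrightarrow$ the $v_i$ positively span $\R^2$; and $M$ has no circuit of signature $(1,1)$ $\Leftrightarrow$ $M^*$ has no cocircuit of signature $(1,1)$ $\Leftrightarrow$ no (linear) hyperplane of $\R^2$ strictly separates one $v_i$ from the other five. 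I would verify, as a consistency check, that the count is compatible with the known classification of rank-four oriented matroids on at most ten elements cited from \cite{Matsumoto-et-al}, and that the sub-totals match the column ``$\#$ O.~m.'s'' in Table~\ref{table:result1} once the further refinement by coplanarities, vertices, and interior points (carried out in Table~\ref{table:om}) is taken into account.
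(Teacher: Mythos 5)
Your overall strategy is the same as the paper's: pass to the rank-two dual, realize it as a configuration of six vectors in $\R^2$, and enumerate those configurations organized by the number of lines (cocircuits) they span. However, the two concrete criteria you propose to run the enumeration with are not the correct ones, and they are exactly what determines which configurations get counted, so as written your case analysis would not return $55$. The condition ``$M$ has no circuit of signature $(1,1)$'' dualizes to ``$M^*$ has no cocircuit of signature $(1,1)$'', and in the vector picture this says: no line of the configuration (a line through the origin spanned by one of the vectors) contains four or more of the six vectors -- given total cyclicity, four vectors on a line force the remaining two to lie strictly on opposite sides, which is precisely a $(1,1)$ cocircuit, and lines with five or six vectors are already excluded by total cyclicity. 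It is \emph{not} equivalent to ``no open halfplane contains exactly one of the six vectors'': that would forbid every cocircuit with a singleton side, e.g.\ it would wrongly exclude the uniform configuration with vectors at angles $0,10,20,120,130,240$ degrees (the line through the $0$-degree vector has only the $240$-degree vector on one side), whose dual is one of the uniform oriented matroids $6.1$--$6.4$ and certainly belongs to the list. Nor is it equivalent to your ``concrete'' version (every vector has vectors strictly on both sides of its line, or an antipodal partner): four vectors on the $x$-axis, two in each direction, together with $(0,1)$ and $(0,-1)$, satisfies that condition yet has a $(1,1)$ cocircuit and must be excluded, since its dual corresponds to a repeated point. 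The paper uses the correct translation (``no line can contain four or more vectors'').

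A second genuine gap is your claim that on the dual side there are ``no loops\dots to worry about''. A loop of $M^*$ is a coloop of $M$, i.e.\ a point whose five companions are coplanar, and such oriented matroids do occur: they form the entire first row of Table~\ref{table:om}, eleven of the $55$ classes. This is why the paper's enumeration explicitly allows placing vectors ``at the origin''; discarding zero vectors would silently lose those eleven classes. A smaller slip in the same vein: a line of the dual configuration can carry up to three vectors (for instance the dual labelled $2.1$ has six nonzero vectors on only two lines, so three per line), not ``one or two''. With these corrections -- the six vectors (some possibly zero) must positively span $\R^2$, and no line may contain four or more of them -- your bookkeeping by number of lines and by how vectors are distributed on their rays becomes exactly the paper's procedure, which is then an exhaustive finite check.
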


\begin{figure}[htb]
\centering
\includegraphics[scale=0.52]{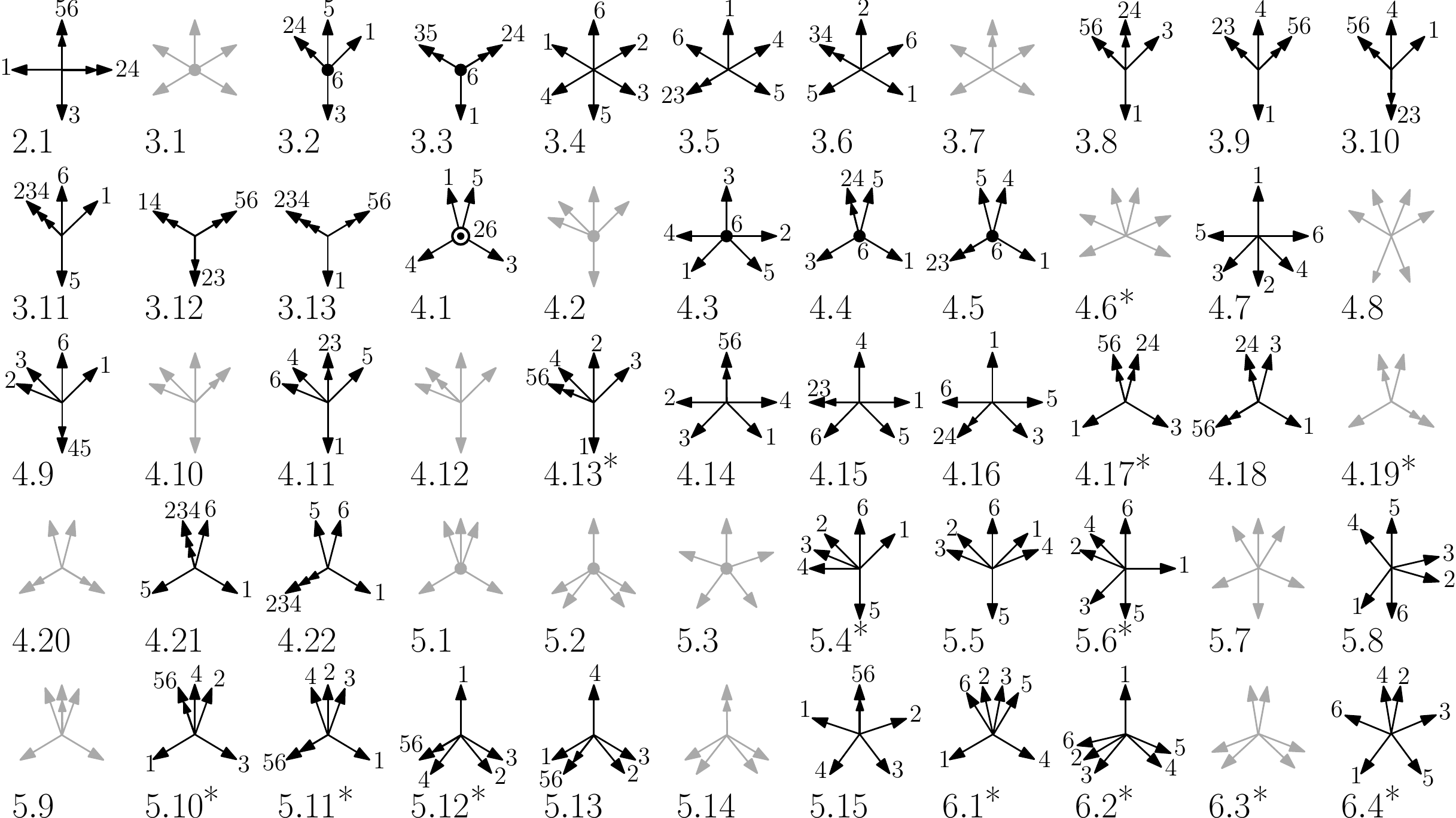}
\caption{The $55$ possible oriented matroids of six distinct points in $\R^3$ (represented by their duals, of rank two).
}
\label{fig:OM}
\end{figure}


\begin{table}[h]
\small
\centering
\begin{tabular}{|c|c|c|c|c|c|c|c|}
\hline
\textbf{\torre{$\#$ Vertices} {$\#$ Int. pts.}} & 
\torre{$v=4$}{$i=2$}&
\torre{$v=4$}{$i=1$}&
\torre{$v=4$}{$i=0$}&
\torre{$v=5$}{$i=1$}&
\torre{$v=5$}{$i=0$}&
\torre{$v=6$}{$i=0$}\\
\hline
\hline
\begin{tabular}{c}$\exists\ 5$\\ coplanar\\ points \end{tabular}
 & & &
\begin{tabular}{c} 3.1,  3.2\\  4.1, 4.2\\ 4.4, 5.1 \end{tabular} &
&
\begin{tabular}{c} 3.3\\ 4.3\\ 4.5\\ 5.2\end{tabular} &
5.3\\
\hline
\begin{tabular}{c} $\exists\ (3,1)$\\ coplanarity\\but no $5$\\ coplanar\\ points \end{tabular} &
\torre{4.17*}{5.10*} & 
\begin{tabular}{c} 3.8\\3.11\\4.12\\4.13*\\5.4*\end{tabular} &
\begin{tabular}{c} 3.6\\ 3.7\\ 4.6* \end{tabular} &
\begin{tabular}{c} 3.9\\ 3.13 \\4.10\\4.18\\4.19*\\5.11*\\5.12*  \end{tabular} &
\begin{tabular}{c} 4.7\\ 4.16\\ 5.6*\end{tabular} &
\\
\hline
\begin{tabular}{c} $\exists\ (2,2)$\\ coplanarity\\but none of\\  the above \end{tabular} &
5.9 &
&
&
\begin{tabular}{c} 5.5\\ 5.13  \end{tabular} &
\torre{3.5}{3.10}&
\begin{tabular}{c} 3.4, 3.12\\ 4.8,  4.14\\ 4.20, 5.7\\ 5.8, 5.14, 5.15 \end{tabular}
\\
\hline
\begin{tabular}{c} $\exists\ (2,1)$\\ coplanarity\\but none of\\the above \end{tabular} &
4.21 &
4.11 &
2.1  &
4.22 &
\torre{4.9}{4.15} &
\\
\hline
\torre{no}{coplanarity} & 6.1* & & & 6.2* & & \torre{6.3*}{6.4*} \\
\hline
\end{tabular}
\caption{The $55$ oriented matroids of six points in $\R^3$ classified according to number of vertices, interior points, and presence of certain coplanarities.}
\label{table:om}
\end{table}

To get this list we have classified the oriented matroids of rank 2 according to their number of cocircuits (i.e., of different lines containing vectors of the configuration). In fact, each oriented matroid is labeled $N.M$ where $N$ indicates its number of cocircuits and $M$ is just an indexing label. For example, the four oriented matroids labeled $6.M$ are the \emph{uniform} ones, whose duals correspond to point configurations in \emph{general position}.
Once the number of cocircuits (i.e., lines) is fixed we just need to look at all the possible ways of putting one or more vectors along each line (or at the origin) keeping the two constraints that we have for our oriented matroids, which translate to:
\begin{itemize}
\item 
The oriented matroid must be totally cyclic (that is, \emph{the vectors must positively span $\R^2$}) and 
\item there must not be a cocircuit of type $(1,1)$ (\emph{no line can contain four or more vectors}).
\end{itemize}

A posteriori, it turns out that many of the oriented matroids in the classification cannot be realized by the six lattice points in a $3$-polytope of size six. Those appear in gray in the figure. 
Also, since being dps depends only on the oriented matroid (it corresponds to the non-existence of a $(2,2)$ or a $(2,1)$ circuit) the figure and tables indicates the oriented matroids corresponding to dps polytopes with an asterisk.

For future reference, the six elements in each oriented matroid are labeled with the numbers $1$ to $6$. The volume vectors in tables~\ref{table:6points-I} and \ref{table:6points-II} are given with respect to this order.

\subsection{Polytopes with $4$ or $5$ lattice points}
\label{subsec:5points}
\label{subsec:4points}

Three-dimensional lattice polytopes of size four, \emph{empty tetrahedra}, were classified 50 years ago:

\begin{theorem}[Classification of empty tetrahedra, White~\cite{White}]
\label{thm:empty_tetrahedra}
Every empty lattice tetrahedron is $\Z$-equivalent to the following $T(p,q)$, for some $q\in \N$ and $p\in\{0,q-1\}$ with $\gcd(p,q)=1$:
\[
T(p,q)= \conv \{ (0,0,0), (1,0,0), (0,0,1), (p,q,1) \}
\]
Moreover, $q=|\vol \, T(p,q)|$ and $T(p,q)$ is equivalent to $T(p',q)$ if and only if $p'=\pm p^{\pm 1} \pmod q$.
\end{theorem}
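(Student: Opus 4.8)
\emph{Step 1: every empty tetrahedron is $\Z$-equivalent to some $T(p,q)$.} Let $T$ be an empty tetrahedron. Its only lattice points are its four vertices, so by Theorem~\ref{thm:Howe} it has width one: after a $\Z$-equivalence we may assume $T$ lies between the planes $z=0$ and $z=1$. The four vertices split either as $1+3$ or as $2+2$ between these two planes. In the $1+3$ case the three coplanar vertices form an empty, hence unimodular, lattice triangle, so $T$ is a unimodular simplex and $T\cong T(0,1)$. In the $2+2$ case let $v_0,v_1$ be the vertices at $z=0$ and $v_2,v_3$ those at $z=1$; emptiness forces the segments $[v_0,v_1]$ and $[v_2,v_3]$ to be primitive. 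Translate $v_0$ to the origin; apply a transformation of the plane $z=0$ (trivial on $z$) sending $v_1$ to $(1,0,0)$; apply a shear $(x,y,z)\mapsto(x+mz,y+nz,z)$ sending $v_2$ to $(0,0,1)$. Then $v_3=(p',q',1)$ with $\gcd(p',q')=1$ and $q'\ne 0$ (otherwise all four vertices lie in a plane). The transformations fixing $(0,0,0),(1,0,0),(0,0,1)$ and preserving this shape are exactly $(x,y,z)\mapsto(x+ky,\pm y,z)$, which let us replace $q'$ by $q:=|q'|\ge 1$ and reduce $p'$ modulo $q$ to $p\in\{0,1,\dots,q-1\}$. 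Thus $T\cong T(p,q)$ with $\gcd(p,q)=1$.

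\emph{Step 2: volume, and emptiness of $T(p,q)$.} A direct computation gives $|\vol T(p,q)|=\bigl|\det\bigl((1,0,0),(0,0,1),(p,q,1)\bigr)\bigr|=q$. Moreover each $T(p,q)$ is genuinely empty: there is no integer strictly between $z=0$ and $z=1$, and the slices $z=0$ and $z=1$ are the primitive segments $[(0,0,0),(1,0,0)]$ and $[(0,0,1),(p,q,1)]$, whose only lattice points are their endpoints.

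\emph{Step 3: the equivalence relation.} Since the volume is a $\Z$-invariant, $T(p,q)\cong T(p',q')$ forces $q=q'$. Fix $q$ and attach to a simplex with vertices $v_0,\dots,v_3$ the finite cyclic group $G=\Z^3/\langle v_1-v_0,v_2-v_0,v_3-v_0\rangle$ (a $\Z$-equivalence sends this sublattice to the corresponding one, as the edge sublattice of a simplex is independent of the base vertex). For $T(p,q)$ one has $G\cong\Z/q$, generated by $(0,1,0)$; writing this generator in barycentric coordinates with respect to $(v_0,v_1,v_2,v_3)$ and clearing denominators gives the \emph{weight multiset} $\{1,\,q-1,\,p,\,q-p\}\subset\Z/q$, well defined up to multiplication by a unit of $\Z/q$ (change of generator) and up to the vertex relabelling induced by an affine isomorphism. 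Hence a $\Z$-equivalence $T(p,q)\to T(p',q)$ produces a unit $u\in(\Z/q)^{*}$ with $\{u,-u,up,-up\}=\{1,-1,p',-p'\}$ in $\Z/q$; matching the two $\pm$-pairs yields either $u\equiv\pm 1$ (so $p'\equiv\pm p$) or $u\equiv\pm p'$ together with $up\equiv\pm 1$ (so $p'\equiv\pm p^{-1}$). Conversely, each of the cases $p'\equiv\pm p^{\pm1}\pmod q$ is realized by an explicit unimodular map permuting the vertices: for $p'\equiv -p$ swap $v_2\leftrightarrow v_3$ after a suitable shear; for $p'\equiv p^{-1}$, writing $r=p^{-1}\bmod q$ and $pr=1+sq$, the affine map $x\mapsto Mx+(r,q,1)$ with $Me_1=(-r,-q,0)$, $Me_2=(s,p,0)$, $Me_3=(1-r,-q,-1)$ has determinant $1$ and sends $T(p,q)$ to $T(r,q)$; compose these for $-p^{-1}$. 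Therefore $T(p,q)\cong T(p',q)$ if and only if $p'\equiv\pm p^{\pm1}\pmod q$.

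\emph{Main obstacle.} Everything rests on knowing that an empty tetrahedron has width one, which I would import directly from Theorem~\ref{thm:Howe}; without such an input this is the genuinely hard point, amounting to showing that for $T=\conv\{(0,0,0),(1,0,0),(0,1,0),(a,b,c)\}$ with $0\le a,b<c$ and unimodular facets, emptiness (equivalently $(az\bmod c)+(bz\bmod c)<c+z$ for all $0<z<c$) forces $a=1$, or $b=1$, or $a+b=c$, each of which exhibits a width-one direction along a pair of opposite edges. Granting width one, Steps 1--3 are routine linear algebra and elementary number theory; the points needing care are tracking precisely which residual transformations survive each normalization (to get the clean parametrization and the sharp equivalence condition) and handling the few small-$q$ cases in Step 3 where the two $\pm$-pairs coincide.
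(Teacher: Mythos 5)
Your proposal is essentially correct, but note that the paper does not prove this statement at all: it is quoted as a classical result of White, so there is no internal proof to compare against. Your route is a legitimate derivation within the paper's framework: you import the width-one property from Theorem~\ref{thm:Howe} (Scarf's/Howe's result, which the paper also quotes and which does not depend on White), and then the normalization to $T(p,q)$, the volume computation, the emptiness check, and the equivalence criterion are supplied by you. I checked the delicate pieces: the quotient group $\Z^3/\langle v_1-v_0,v_2-v_0,v_3-v_0\rangle\cong\Z/q$ is indeed base-point independent and $\Z$-equivalence covariant; the barycentric ``weights'' of the generator $(0,1,0)$ are $(q+p,-p,-1,1)\equiv\{1,q-1,p,q-p\}\pmod q$, well defined up to a unit and permutation, so a $\Z$-equivalence forces $\{\pm u,\pm up\}=\{\pm1,\pm p'\}$ and hence $p'\equiv\pm p^{\pm1}$; and your explicit matrix for $p\mapsto p^{-1}$ does map the vertices of $T(p,q)$ to those of $T(r,q)$ with determinant $1$ (and $T(p,q)\cong T(q-p,q)$ via the map fixing $0,e_1$ and exchanging the two top vertices after the normalizing shear). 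Two caveats, both of which you already flag: the degenerate cases where the two $\pm$-pairs overlap (e.g.\ $p\equiv\pm1$) still yield $p'\equiv\pm p$, but this should be said explicitly rather than deferred; and, more importantly, the genuinely hard content of White's theorem is precisely the width-one statement, so your argument is independent of White only to the extent that Scarf's proof of Theorem~\ref{thm:Howe} is --- you are deducing the special case from the quoted generalization, which is logically fine here but inverts the usual order and should be acknowledged as such. What your approach buys beyond the citation is the part of the statement that Theorem~\ref{thm:Howe} does not give, namely the clean parametrization and the sharp criterion $p'\equiv\pm p^{\pm1}\pmod q$, and that part of your write-up is sound.
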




While working out our classification, we often need to check whether certain tetrahedra are empty and to find their type $(p,q)$. We have implemented a MATLAB program to do this for us. Our input is a  lattice tetrahedron given by its vertices $T=\{p_1,p_2,p_3,p_4\}$. By Theorem~\ref{thm:empty_tetrahedra}, $T$ is empty if and only if:
\begin{itemize}

\item All its edges are primitive segments (the coordinates of each edge-vector are relatively prime), and

\item $T$ has width one with respect to a functional that is constant on two opposite edges (in Theorem~\ref{thm:empty_tetrahedra} this would be the functional $z$). There are three choices for this pair of edges, and computing the (primitive) functional that is constant on a particular choice is straightforward.
\end{itemize}

Three-dimensional lattice polytopes of size five were fully classified in~\cite{5points}. Remember
that five  points $A=\{p_1,p_2,p_3,p_4,p_5\}$ affinely spanning  $\R^3$ have a unique affine dependence. The \emph{Radon partition} of $A$ is obtained by looking at the signs of coefficients in this dependence. 
We say that the point configuration $A$ of size five  has \emph{signature $(i,j)$} if this dependence has $i$ positive and $j$ negative coefficients. 
The five possibilities for $(i,j)$ are $(2,1)$, $(2,2)$, $(3,2)$, $(3,1)$ and $(4,1)$. (Observe that 
$(i,j)$ and $(j,i)$ are the same signature).

Table~\ref{table:5points} shows the full classification of three-polytopes of size five.
The polytopes are grouped according to their signatures and we include a representative for each equivalence class, as well as the volume vector of the class. However, 
in order for the volume vector to highlight signature, and taking into account Remark~\ref{rmk:(d+2)-volumeVector}, for a $5$-point configuration we 
modify our conventions and write volume vectors in the form
\[
w=(w_{2345}, \ -w_{1345},\ w_{1245},\ -w_{1235},\ w_{1234})
\]
where $w_{ijkl}$ is as in Equation~\ref{eq:volume}.
With this choice, the signature of $A$ is just the number of positive and negative entries in the volume vector, and the sum of coordinates in the volume vector vanishes.

\begin{table}[htb]
\footnotesize
\centering
\begin{tabular}{|c|c|c|c|}
\hline
\textbf{Sign.} &\textbf{Volume vector} & \textbf{Width} & \textbf{Representative} \\
\hline
$(2,2)$&  $(-1,1,1,-1,0)$ &$1$& $(0,0,0)$, $(1,0,0)$, $(0,1,0)$, $(1,1,0)$,$(0,0,1)$\\
  \hline
$(2,1)$&  \begin{tabular}{c}
    $(-2q,q,0,q,0)$  \\
    $0\le p\le \frac{q}{2}$,\\ $\gcd(p,q)=1$ 
  \end{tabular}
  & $1$  & $(0,0,0)$, $(1,0,0)$, $(0,0,1)$, $(-1,0,0)$,$(p,q,1)$\\
  \hline
$(3,2)$&  \begin{tabular}{c}
    $(-a-b,a,b,1,-1)$ \\
    $0<a\le b$, \\$\gcd(a,b)=1$
  \end{tabular}
  & $1$ & $(0,0,0)$, $(1,0,0)$, $(0,1,0)$, $(0,0,1)$,$(a,b,1)$\\
  \hline
$(3,1)$&  \begin{tabular}{c}
    $(-3,1,1,1,0)$  \\
    $(-9,3,3,3,0)$
  \end{tabular} &   \begin{tabular}{c}
    $1$  \\
    $2$
  \end{tabular}  &\begin{tabular}{l}
$(0,0,0)$, $(1,0,0)$, $(0,1,0)$, $(-1,-1,0)$,$(0,0,1)$\\
$(0,0,0)$, $(1,0,0)$, $(0,1,0)$, $(-1,-1,0)$,$(1,2,3)$
  \end{tabular}\\
  \hline
&  $(-4,1,1,1,1)$ & $2$ & $(0,0,0)$, $(1,0,0)$, $(0,0,1)$, $(1,1,1)$,$(-2,-1,-2)$\\
& $(-5,1,1,1,2)$ & $2$ & $(0,0,0)$, $(1,0,0)$, $(0,0,1)$, $(1,2,1)$,$(-1,-1,-1)$\\
& $(-7,1,1,2,3)$ & $2$ & $(0,0,0)$, $(1,0,0)$, $(0,0,1)$, $(1,3,1)$,$(-1,-2,-1)$\\
$(4,1)$&  $(-11,1,3,2,5)$ &$2$ & $(0,0,0)$, $(1,0,0)$, $(0,0,1)$, $(2,5,1)$,$(-1,-2,-1)$\\
&  $(-13,3,4,1,5)$ &  $2$ & $(0,0,0)$, $(1,0,0)$, $(0,0,1)$, $(2,5,1)$,$(-1,-1,-1)$\\
&  $(-17,3,5,2,7)$ &  $2$ & $(0,0,0)$, $(1,0,0)$, $(0,0,1)$, $(2,7,1)$,$(-1,-2,-1)$\\
&  $(-19,5,4,3,7)$ &  $2$ & $(0,0,0)$, $(1,0,0)$, $(0,0,1)$, $(3,7,1)$,$(-2,-3,-1)$\\
&  $(-20,5,5,5,5)$ &  $2$ & $(0,0,0)$, $(1,0,0)$, $(0,0,1)$, $(2,5,1)$,$(-3,-5,-2)$\\
\hline
\end{tabular}
\caption{Complete classification of lattice $3$-polytopes of size $5$. Those of signatures $(3,2)$, $(3,1)$ and $(4,1)$ are dps.}
\label{table:5points}
\end{table}

\begin{remark}
Let $A=\{p_1,p_2,p_3,p_4,p_5\} \subset \Z^3$. By Theorem~\ref{thm:VolumeVectors}, for $\conv(A)$ to have size $5$ it is sufficient that $A$ has volume vector equal to one of those listed in Table~\ref{table:5points}, except in the following cases:

\begin{itemize}
\item Signature  $(2,1)$ and  $q\ge 2$.

\item Signature  $(3,1)$ and  volume vector $(-9,3,3,3,0)$.

\item Signature  $(4,1)$ and  volume vector $(-20,5,5,5,5)$.
\end{itemize}

Whenever we are in these three exceptions, we need to check that the transformation between $\conv(A)$ and the representative element of the class 
(the map mentioned in part 1 of Theorem~\ref{thm:VolumeVectors}) is indeed integer. In the first two cases, the following two lemmas are very useful for doing this:

\begin{lemma}[\protect{\cite[Thm. 3.4]{5points}}]
\label{lemma:(3,1)thm}
A configuration $A$ of size five and with volume vector $(-9,3,3,3,0)$ has no additional lattice points in $\conv(A)$ if, and only if, in coordinates for which the $(3,1)$ circuit is $\conv \{o,e_1,e_2,-e_1-e_2\}$, the fifth point $(a,b,q)$ verifies $a \equiv -b \equiv \pm 1 \pmod 3$.
\end{lemma}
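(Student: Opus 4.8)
The plan is to reduce the statement to an elementary lattice‑point count in two small planar cross‑sections. First I would fix coordinates as in the statement: by Remark~\ref{rmk:(d+2)-volumeVector}, the volume vector $(-9,3,3,3,0)$ encodes the affine dependence $-9p_1+3p_2+3p_3+3p_4=0$, so $p_1$ is the centroid of the triangle $p_2p_3p_4$; in the chosen coordinates $p_1=o$, $\{p_2,p_3,p_4\}=\{e_1,e_2,-e_1-e_2\}$, and $p_5=(a,b,q)$. A one‑line $3\times3$ determinant gives $w_{2345}=3q$, hence $|q|=3$, and (possibly after the reflection $z\mapsto -z$) we may take $q=3$. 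Since $o$ lies in the relative interior of the facet $\conv\{p_2,p_3,p_4\}\subset\{z=0\}$ while $p_5\in\{z=3\}$, the polytope $P:=\conv(A)$ is the tetrahedron $\conv\{p_2,p_3,p_4,p_5\}$ (of normalized volume $9$), and its lattice points always include the five points of $A$. So the task is to decide exactly when these are the only ones.

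Next I would slice $P$ by the planes $z=k$ for $k=0,1,2,3$. The sections at $z=0$ and $z=3$ are the triangle $\conv\{e_1,e_2,-e_1-e_2\}$ (whose only lattice points are its three vertices and $o$, e.g.\ by Pick's theorem, since its normalized area is $3$) and the single point $p_5$, so no unexpected lattice point can occur there; hence $\conv(A)$ has extra lattice points if and only if the section at height $1$ or at height $2$ contains a point of $\Z^2$. Writing a point $(x,y,k)$ in barycentric coordinates with respect to $p_2,p_3,p_4,p_5$ and eliminating those coordinates, one finds that $(x,y,k)\in P$ is equivalent to
\[
U+V\le 3-k,\qquad 2U-V\ge k-3,\qquad 2V-U\ge k-3,
\]
in the integer variables $U:=3x-ka$, $V:=3y-kb$. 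For $k=1$ the region these inequalities cut out in the $(U,V)$‑plane is $2\cdot\conv\{e_1,e_2,-e_1-e_2\}$, and for $k=2$ it is $\conv\{e_1,e_2,-e_1-e_2\}$ itself. (Equivalently: an integer shear fixing the plane $\{z=0\}$ lets us change $a,b$ modulo $3$, so everything is genuinely a mod‑$3$ question.) Thus the statement reduces to asking which residue classes in $\Z^2/3\Z^2$ are hit by the lattice points of these two triangles.

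Finally I would list the $10$ lattice points of $2\cdot\conv\{e_1,e_2,-e_1-e_2\}$ and observe that their residues cover exactly $7$ of the $9$ classes modulo $3$, the missing two being $(1,2)$ and $(2,1)$; and that the $4$ lattice points of $\conv\{e_1,e_2,-e_1-e_2\}$ cover the classes $(0,0),(1,0),(0,1),(2,2)$. Since $(U,V)\equiv(-a,-b)\pmod 3$ when $k=1$ and $(U,V)\equiv(a,b)\pmod 3$ when $k=2$ (using $-2\equiv1$), the section at height $1$ is lattice‑point‑free precisely when $(a,b)\equiv(1,2)$ or $(2,1)\pmod 3$, i.e.\ $a\equiv -b\equiv\pm1\pmod 3$; and in exactly those two cases the section at height $2$ is automatically lattice‑point‑free as well. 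Combining the four slices yields the claimed equivalence. I do not expect a real obstacle: the one thing that needs care is the bookkeeping in the last step, namely correctly enumerating the ten lattice points of the doubled triangle together with their residues modulo $3$, and checking that the two "good" congruence classes also kill the height‑$2$ slice.
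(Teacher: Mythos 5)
Your proposal is correct. Note that this paper does not actually prove the lemma: it is imported from \cite[Thm.~3.4]{5points}, so there is no in-paper argument to compare against; your slicing argument (cross-sections at $z=1,2$, reduction to residues of $(a,b)$ modulo $3$ against the lattice points of $2T$ and $T$, where $T=\conv\{e_1,e_2,-e_1-e_2\}$) is a sound, self-contained proof in the same spirit as the techniques the paper uses elsewhere (parallel-planes reasoning), and all the computations check out: $|q|=3$ from $|w_{2345}|=9$, the ten points of $2T$ miss exactly the classes $(1,2),(2,1)$, and the four points of $T$ hit $(0,0),(1,0),(0,1),(2,2)$. One small wording caveat: the height-$2$ slice is lattice-point-free not \emph{only} in the two good classes (e.g.\ also for $(a,b)\equiv(2,0)$), but your argument only needs that it is free \emph{in} those two classes while the height-$1$ slice already obstructs all other classes, so the stated equivalence follows.
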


\begin{lemma}[\protect{\cite[Thm. 3.2(3)]{5points}}]
\label{thm:lemma(2,1)}
A configuration $A$ of size five and with volume vector $(-2q,q,q,0,0)$ has no additional lattice points in $\conv(A)$ if, and only if, in coordinates for which the $(2,1)$ circuit is $\conv \{o,e_2,-e_2\}$ and the fourth point is $e_1$, the fifth point $(a,b,q)$ verifies $a \equiv 1 \pmod q$ and $\gcd(b,q)=1$.
\end{lemma}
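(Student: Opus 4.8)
The plan is to reduce the statement to White's classification of empty tetrahedra (Theorem~\ref{thm:empty_tetrahedra}) together with the ``three functionals'' emptiness test recalled just above it.

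First I would set up the geometric picture. Write $v=(a,b,q)$ for the fifth point; since $A$ affinely spans $\R^3$ we may assume $q\ge 1$. The point $o$ is the midpoint of the edge $[e_2,-e_2]$, so $P:=\conv(A)$ is the tetrahedron $T=\conv\{e_2,-e_2,e_1,v\}$ of volume $2q$, with $o$ lying on one of its edges. The plane $H$ through $o$, $e_1$ and $v$ separates $e_2$ from $-e_2$ (a normal to $H$ is $(0,q,-b)$, which is positive on $e_2$ and negative on $-e_2$), so it cuts $T$ into the two tetrahedra $T_1=\conv\{o,e_1,e_2,v\}$ and $T_2=\conv\{o,e_1,-e_2,v\}$, each of volume $q$, sharing the triangular facet $\conv\{o,e_1,v\}$. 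Because $-e_2\notin T_1$ and $e_2\notin T_2$, one sees at once that $P$ contains no lattice point other than the five points of $A$ if and only if both $T_1$ and $T_2$ are empty tetrahedra.

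The second step is to make ``$T_1$ empty'' and ``$T_2$ empty'' explicit. I would apply the emptiness test to each: a tetrahedron that is the join of two opposite edges lying in two \emph{consecutive} lattice hyperplanes is empty precisely when both of those edges are primitive segments, so for each of the three pairs of opposite edges I would compute the primitive functional that is constant on that pair, record when it has width $1$, and record primitivity of the two edges. A direct computation then gives: $T_1$ is empty if and only if, modulo $q$, either ($b\equiv 1$ and $\gcd(a,q)=1$), or ($a\equiv 1$ and $\gcd(b,q)=1$), or ($a+b\equiv 0$ and $\gcd(a,b,q)=1$); and $T_2$ is empty if and only if, modulo $q$, either ($b\equiv -1$ and $\gcd(a,q)=1$), or ($a\equiv 1$ and $\gcd(b,q)=1$), or ($a-b\equiv 0$ and $\gcd(a,b,q)=1$). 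The middle alternative is the same for $T_1$ and $T_2$ because it only involves the line through $o,e_2,-e_2$, which both tetrahedra share.

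Finally I would show that the conjunction of these two disjunctions is equivalent to the single condition $a\equiv 1\pmod q$ and $\gcd(b,q)=1$. One direction is trivial: that condition is the common middle alternative, so it makes both $T_1$ and $T_2$ empty. For the converse, if it fails then $T_1$ must be empty through one of its two outer alternatives and $T_2$ through one of its two outer alternatives; combining the four possibilities, one uses elementary arithmetic — e.g.\ $\gcd(a-1,b,q)=\gcd(b,q)$ when $q\mid a-1$, and $q\mid 2a$ together with $\gcd(a,q)=1$ forces $q\mid 2$ — to deduce in every case that $a\equiv 1\pmod q$ and $\gcd(b,q)=1$ after all (the values $q\in\{1,2\}$ being checked by hand), a contradiction. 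I expect this last bookkeeping step, rather than the geometric reduction or the per-tetrahedron calculation, to be the only place that needs care.
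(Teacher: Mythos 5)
Your argument is correct, and in fact there is nothing in this paper to compare it against: Lemma~\ref{thm:lemma(2,1)} is imported verbatim from \cite[Thm.~3.2(3)]{5points}, so no proof of it appears here. Your route is a legitimate self-contained derivation using exactly the toolkit this paper does recall (White's Theorem~\ref{thm:empty_tetrahedra} and the opposite-edges emptiness test of Section~\ref{subsec:4points}). I checked the key steps: the plane through $o$, $e_1$ and $v=(a,b,q)$ has normal $(0,q,-b)$ and strictly separates $e_2$ from $-e_2$, so $\conv(A)$ splits into $T_1=\conv\{o,e_1,e_2,v\}$ and $T_2=\conv\{o,e_1,-e_2,v\}$, each of volume $q$, and size five is indeed equivalent to emptiness of both; the three opposite-edge functionals for $T_1$ are $(0,q,1-b)$, $(q,0,1-a)$ and $(q,q,-a-b)$ up to normalization, giving precisely your three alternatives (and their $b\mapsto -b$ images for $T_2$); and the final Boolean reduction works, since in the mixed cases $b\equiv\pm1\pmod q$ forces $\gcd(b,q)=1$ and $a\equiv\mp b\equiv 1\pmod q$, while the two remaining cases force $q\mid 2$ (in the $(3,3)$ case via $q\mid 2\gcd(a,b,q)=2$, a detail slightly different from your stated hint but harmless), and $q\in\{1,2\}$ is checked directly. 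So the only caveat is cosmetic: your parenthetical arithmetic hints should be adjusted to match the cases that actually occur, but the structure and conclusion of the proof are sound.
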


\end{remark}


\section{Overview of the classification scheme}
\label{sec:overview}

In what follows, we  denote by $A=\{p_1,\dots ,p_6\}\subset \Z^3$ a configuration of six lattice points, and consider the polytope  $P=\conv(A)\subset \R^3$, which may or not have size $6$. 
We denote $A^i:=A \setminus \{p_i\}$ and $P^i$ (or $P \setminus \{p_i\}$) the polytope $\conv (A^i)$. It is quite straightforward that if $P$ has size $6$, then for every vertex $p_i$ of $P$,  $P^i$ is a lattice polytope of size $5$.
Conversely, if for every vertex $p_i$ of $P$ we have that $P^i$ has size five, then $P$ has size six.

Remember that a configuration of $5$ lattice points has a unique circuit, consisting of $a+b \le 5$ points ($a$ positive and $b$ negative ones) and that we say that such configuration (or its corresponding polytope) has \emph{signature $(a,b)$}. 

Although our main interest is in polytopes of width greater than one, 
Section~\ref{sec:width1} classifies polytopes of size six and width one. We look separately at configurations with
 $5+1$, $4+2$ or $3+3$ points in the two consecutive parallel planes. Perhaps the most interesting point in the classification is that one of the two uniform oriented matroids with six vertices and without interior points (the one labelled 6.3) can not arise for the six lattice points of a lattice polytope. That is, no lattice $3$-polytope of size six and without coplanarities has an octahedron as its convex hull (Lemma~\ref{lemma:NO-6.3}).
\smallskip

For width greater than one, the classification involves a quite long case study, developed in Sections~\ref{sec:5coplanar}-\ref{sec:gp-intpoint}. For this, suppose that $\conv(A)$ has size six and width greater than one, then one of the following things occurs:

\begin{itemize}[leftmargin=.8cm]

\item $A$ contains $5$ coplanar points (Case A, Section~\ref{sec:5coplanar}). Then $A$ consists of one of the six polygons of size five (see Figure~\ref{fig:2dim-5-overview}) 
plus an extra point at lattice distance at least two from it. It is not hard to show that only configurations 1 and 2 in the figure allow this sixth point to be placed adding no additional lattice points in $\conv(A)$, and they allow it in a single way modulo $\Z$-equivalence.

\begin{figure}[htb]
\vspace{-2mm}
\centering
\includegraphics[scale=0.6]{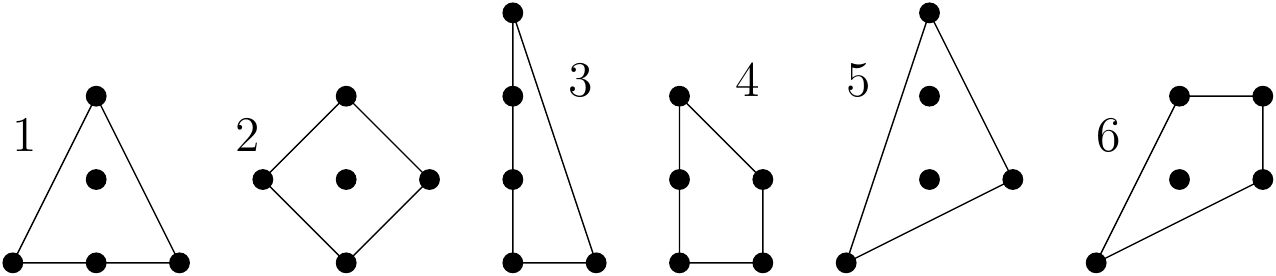}
\caption{The lattice polygons of size five}
\label{fig:2dim-5-overview}
\end{figure}

\item $A$ contains a coplanarity of type $(3,1)$ in a certain plane $H$ (and no five coplanar points). Sections~\ref{sec:(3,1)coplanarity1} (Case B) and~\ref{section:3-1-sameside} (Case C) treat this case, depending on whether the  other two points of $A$ lie in opposite or the same side of $H$. If they lie in opposite sides then they are both at distance $1$ or $3$ of $H$, by the classification of size five. If they lie on the same side, then either they are both vertices of $\conv(A)$ (and lie at distance $1$ or $3$) or one is an interior point (at distance $1$ or $3$ from $H$) and the other is a vertex. In the cases where both are guaranteed to be at distance $1$ or $3$ we use what we call the \emph{parallel-planes method} and when one is interior we use the \emph{$(4,1)$-extension method} (see details below).
\medskip

\item $A$ contains a coplanarity of type $(2,2)$ (and none of the above). This is treated in Sections~\ref{sec:(2,2)coplanarity1} (Case D) and~\ref{sec:(2,2)+(4,1)} (Case E) in much the same way as the $(3,1)$ case, except things are now simpler because every time we said ``distance $1$ or $3$'' in the previous paragraph we can now say ``distance $1$''.
\medskip

\item All coplanarities in $A$ come from $(2,1)$ collinearities (Case F, treated in Section~\ref{sec:(2,1)coplanarity}). We first show that the $(2,1)$ collinearity must be unique (two of them would produce either width one or $5$ coplanar points) and that removing from $A$ one or the other extremal collinear points we  get a configuration of signature $(4,1)$ and size $5$ (Lemma~\ref{lemma:2-1-strong}). Once we have this we can use the \emph{$(4,1)$-extension method}.
\medskip

\item $A$ is in general position (no coplanarities). $A$ must have interior points, by Theorem~\ref{thm:Howe}.
This case is treated in Sections~\ref{sec:2*(4,1)-1} and~\ref{sec:2*(4,1)-2} depending on whether $A$ has one (Case G) or two (Case H) interior points, which corresponds exactly to oriented matroids 6.2 and 6.1, respectively.

It turns out that both oriented matroids have the following useful property: there are two vertices $p_i$ and $p_j$ of $A$ (the elements labeled 5 and 6 in Figure~\ref{fig:OM}) such that both $A^i$ and $A^j$ have signature $(4,1)$. Configurations like these of size $6$ can all be obtained \emph{gluing} two configurations of size $5$ and signature $(4,1)$ along four points. There are only eight configurations of signature $(4,1)$ and size $5$ that we need to consider (see Table~\ref{table:5points}) and (at most; this count contains some repetition) $\left(\binom{8}{2}+8\right)\times 4^2 \times 4!$ possible ways to glue them, which we have checked one by one with the aid of a computer.
\end{itemize}
\medskip

Let us explain the parallel-planes and $(4,1)$-extension methods mentioned above:
We use the \emph{parallel-planes method} when we can guarantee that $A$ is contained in three parallel planes $H_1$, $H_2$ and $H_3$ and we know (or pose without loss of generality) the coordinates of all points but one. In this case we look at what conditions must the coordinates of the last point satisfy for  $\conv(A)$ not to have extra lattice points in the intermediate plane $H_2$. This is a $2$-dimensional problem that can be solved graphically. Observe that $\conv(A)\cap H_2$ is
the convex hull of the union of $A\cap H_2$ and $A_2\cap H_2$, where $A_2$ denotes the set of intersection points $p_ip_j\cap H_2$ of the segments $\{p_ip_j : p_i\in H_1, p_j\in H_3\}$ with $H_2$. This idea sounds more complicated than it is, because we usually use it with $|A\cap H_1|=|A\cap H_3|=1$, so $A_2$ is a single point. The parallel planes method gives us a finite (and small) list of possible positions for the unknown point, which we then check one by one because they could produce extra lattice points in other parallel planes between $H_1$ and $H_3$ (we do not assume $H_1$, $H_2$ and $H_3$ to be consecutive).

We use the \emph{$(4,1)$-extension method} when we know that there is a vertex $p$ in $A$ such that $A\setminus \{p\}$ has signature $(4,1)$ and, moreover, we know an expression of $p$ as an affine combination of the other points. The latter happens when $p$ is part of a coplanarity, because all coplanarities in $A$ satisfy one of the following three affine relations (unless $A$ has five coplanar points):
\[
3p_i = p_j + p_k + p_l, \qquad
p_i+p_j = p_k + p_l, \qquad
2p_i = p_j + p_k.
\]
If we know $A$ to be in these conditions we simply need to go through the $8\times 4!$ possible ways to map $A\setminus \{p\}$ to one of the eight configurations of signature $(4,1)$ from Table~\ref{table:5points}, compute the corresponding $p$, and check whether the convex hull of the result has size six. This is done computationally.

To check that a particular $A$ has convex hull of size six we typically set up a triangulation $T$ of $A$ (see~\cite{deLoeraRambauSantos2010}) and check emptiness of each tetrahedron in $T$ as explained in Section~\ref{subsec:4points}.

\bigskip


\section{Polytopes of size $6$ and width $1$}
\label{sec:width1}

Polytopes of width one have all their lattice points in two consecutive lattice planes, which we assume to be $z=0$ and $z=1$. We denote $P_0$ and $P_1$ the corresponding $2$-dimensional configurations in each plane. There is a finite number of possibilities for these two configurations, but (as long as both $P_0$ and $P_1$ have at least two points) infinitely many ways to rotate one with respect to the other. 

There are three possibilities depending on the sizes of $P_0$ and $P_1$.

\subsubsection*{$\bullet$ $|P_0|=5$, $|P_1|=1$}
$P_0$ is one of the six $2$-dimensional polytopes of size $5$ displayed in the first column of Table~\ref{table:width1-(5,1)}. The equivalence class of $P$ depends only on $P_0$, and not on the choice of the sixth point at $z=1$, which can be assumed to be $(0,0,1)$. Hence we get exactly six  types of polytopes of this kind. None of them is dps, since $5$ points in the same plane cannot be dps.

\begin{table}[htbp]
\centering
\footnotesize
\begin{tabular}{|cc|c|}
\hline
        \multicolumn{2}{|c|}{\rule{0pt}{3ex} \textbf{$P_0=P \cap \{z=0\}$}} & \textbf{O.~M.}\\
\hline
\raisebox{-0.45\height}{\vspace{2 mm} \includegraphics[scale=0.4]{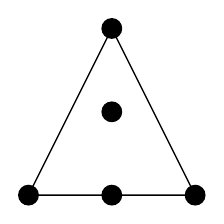}}&
$\begin{pmatrix}
    0 & 1 & 0 &-1 & 0\\
    0 & 0 & 1 & 0 & 2\\ 
    0 & 0 & 0 & 0 & 0
\end{pmatrix}$
 & 3.2\\
\hline
\raisebox{-0.45\height}{\includegraphics[scale=0.4]{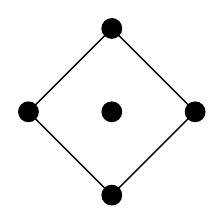}}&
$\begin{pmatrix}
    0 & 1 & 0 &-1 & 0\\
    0 & 0 & 1 & 0 & -1\\ 
    0 & 0 & 0 & 0 & 0 
\end{pmatrix}$
 & 3.3\\
\hline
\raisebox{-0.45\height}{\includegraphics[scale=0.4]{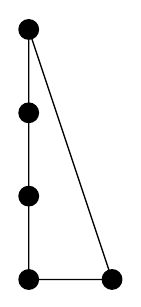}}&
$\begin{pmatrix}
    0 & 1 & 0 & 0 & 0\\
    0 & 0 & 1 & 2 & 3\\ 
    0 & 0 & 0 & 0 & 0
\end{pmatrix}$
 & 4.1\\
\hline
\end{tabular}
\ 
\begin{tabular}{|cc|c|}
\hline
        \multicolumn{2}{|c|}{\rule{0pt}{3ex} \textbf{$P_0=P \cap \{z=0\}$}} & \textbf{O.~M.}\\
\hline
\raisebox{-0.45\height}{\includegraphics[scale=0.4]{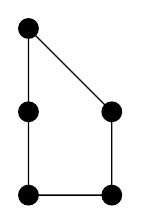}}&
$\begin{pmatrix}
    0 & 1 & 0 & 1 & 0\\
    0 & 0 & 1 & 1 & 2\\ 
    0 & 0 & 0 & 0 & 0
\end{pmatrix}$
 & 4.3\\
\hline
\raisebox{-0.45\height}{\includegraphics[scale=0.4]{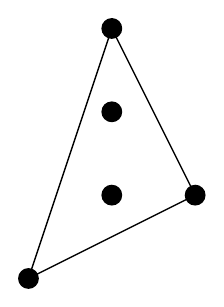}}&
$\begin{pmatrix}
    0 & 1 & 0 &-1 & 0\\
    0 & 0 & 1 &-1 & 2\\
    0 & 0 & 0 & 0 & 0
\end{pmatrix}$
 & 4.4\\
\hline
\rule{0pt}{6ex}\raisebox{-0.45\height}{\includegraphics[scale=0.4]{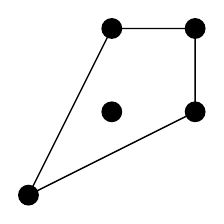}}&
$\begin{pmatrix}
    0 & 1 & 0 &-1 & 1\\
    0 & 0 & 1 &-1 & 1\\
    0 & 0 & 0 & 0 & 0
\end{pmatrix}$
 & 4.5\\
\hline
\end{tabular}
\caption{Polytopes of width one with $5+1$ lattice points.}
\label{table:width1-(5,1)}
\end{table}

\subsubsection*{$\bullet$ $|P_0|=4$, $|P_1|=2$}
Either $P_0$ is one of the three $2$-dimensional polytopes of size $4$, or it consists of four collinear points, as 
displayed in the first column of Table~\ref{table:width1-(4,2)}.
$P_1$ is a primitive segment. Since the classification depends solely on the relative position of the segment $P_1$ (modulo translation) with respect to $P_0$, we assume $P_1$ to have vertices $(0,0,1)$ and $(a,b,1)$, with $a\ge0$. We look at all possibilities one by one in  
Table~\ref{table:width1-(4,2)}. Symmetries of $P_0$ are often used to reduce the possibilities for $a$ and $b$.

\begin{table}[htbp]
\centering
\footnotesize
\begin{tabular}{|c|cc|c|}
\hline
\rule{0pt}{3ex}     \textbf{$P_0=P \cap \{z=0\}$}             & \multicolumn{2}{c|}{\textbf{$P_1=P \cap \{z=1\}$}} & \textbf{O.~M.} \\ 
\hline
\multirow{3}{*}{\torre{\includegraphics[scale=0.4]{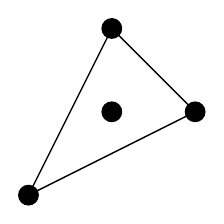}}{$\begin{pmatrix}
    0 & 1 & 0 &-1\\
    0 & 0 & 1 &-1\\
    0 & 0 & 0 & 0
\end{pmatrix}$} } & \raisebox{-0.45\height}{\includegraphics[scale=0.4]{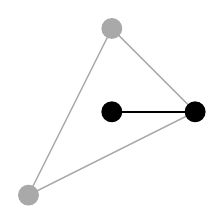}}&
$\begin{pmatrix}
    0 & 1\\
    0 & 0\\
    1 & 1
\end{pmatrix}$  & 4.16 \\ 
\cline{2-4} 
\rule{0pt}{6ex}                      & \raisebox{-0.45\height}{\includegraphics[scale=0.4]{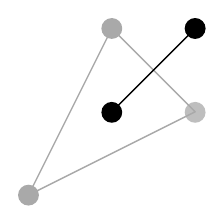}}&
$\begin{pmatrix}
    0 & 1\\
    0 & 1\\
    1 & 1
\end{pmatrix}$  & 4.7 \\ 
\cline{2-4} 
\rule{0pt}{6ex}                  & \raisebox{-0.45\height}{\includegraphics[scale=0.4]{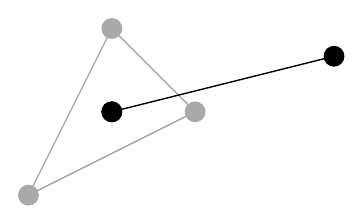}}&
$\begin{pmatrix}
    0 & a\\
    0 & b\\
    1 & 1
\end{pmatrix}$ \torre{\torre{$0<b<a$}{$\gcd(a,b)=1$}}{$2b \neq a$}  & 5.6* \\ 
\hline
\rule{0pt}{6ex} \multirow{3}{*}{\torre{\includegraphics[scale=0.4]{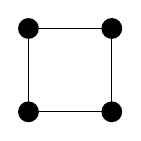}}{$\begin{pmatrix}
    0 & 1 & 0 &1\\
    0 & 0 & 1 &1\\
    0 & 0 & 0 & 0
\end{pmatrix}$} } &  \raisebox{-0.45\height}{\includegraphics[scale=0.4]{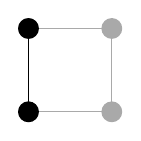}}&
$\begin{pmatrix}
    0 & 0\\
    0 & 1\\
    1 & 1
\end{pmatrix}$ & 3.4 \rule{0pt}{6ex}\\ 
\cline{2-4} 
\rule{0pt}{6ex}                  & \raisebox{-0.45\height}{\includegraphics[scale=0.4]{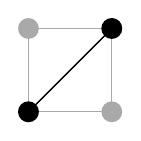}}&
$\begin{pmatrix}
    0 & 1\\
    0 & 1\\
    1 & 1
\end{pmatrix}$& 4.14 \rule{0pt}{6ex}\\ 
\cline{2-4} 
\rule{0pt}{6ex}                  & \raisebox{-0.45\height}{\includegraphics[scale=0.4]{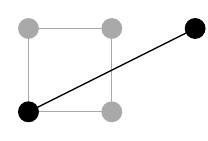}}&
$\begin{pmatrix}
    0 & a\\
    0 & b\\
    1 & 1
\end{pmatrix}$ \torre{$0<b<a$}{$\gcd(a,b)=1$} & 5.8 \\ 
\hline
\multirow{5}{*}{\torre{\begin{tabular}{c} \null \\ \null \\ \null\\ \null\end{tabular}}{\torre{\includegraphics[scale=0.4]{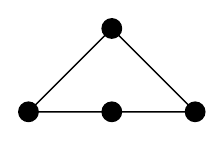}}{$\begin{pmatrix}
    0 & 1 & 0 &-1\\
    0 & 0 & 1 &0\\
    0 & 0 & 0 & 0
\end{pmatrix}$} }} & \raisebox{-0.45\height}{\includegraphics[scale=0.4]{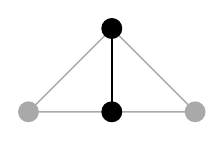}}&
$\begin{pmatrix}
    0 & 0\\
    0 & 1\\
    1 & 1
\end{pmatrix}$& 3.10 \rule{0pt}{6ex}\\ 
\cline{2-4} 
\rule{0pt}{6ex}                  & \raisebox{-0.45\height}{\includegraphics[scale=0.4]{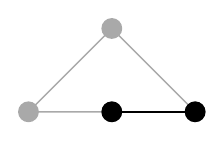}}&
$\begin{pmatrix}
    0 & 1\\
    0 & 0\\
    1 & 1
\end{pmatrix}$& 4.3 \rule{0pt}{6ex}\\ 
\cline{2-4} 
\rule{0pt}{6ex}                  & \raisebox{-0.45\height}{\includegraphics[scale=0.4]{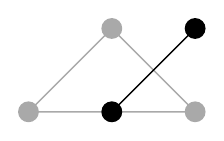}}&
$\begin{pmatrix}
    0 & 1\\
    0 & 1\\
    1 & 1
\end{pmatrix}$& 3.5 \rule{0pt}{6ex}\\ 
\cline{2-4} 
\rule{0pt}{6ex}                  & \raisebox{-0.45\height}{\includegraphics[scale=0.4]{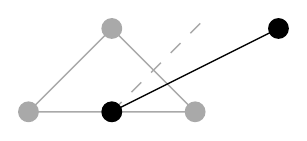}}&
$\begin{pmatrix}
    0 & a\\
    0 & b\\
    1 & 1
\end{pmatrix}$ \torre{$0<b<a$}{$\gcd(a,b)=1$}  & 4.15 \rule{0pt}{6ex}\\ 
\cline{2-4} 
\rule{0pt}{6ex}                  & \raisebox{-0.45\height}{\includegraphics[scale=0.4]{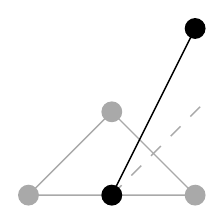}}&
$\begin{pmatrix}
    0 & b\\
    0 & a\\
    1 & 1
\end{pmatrix}$ \torre{$0<b<a$}{$\gcd(a,b)=1$}& 4.9 \rule{0pt}{6ex}\\ 
\hline
\rule{0pt}{6ex} \raisebox{-0.45\height}{\includegraphics[scale=0.4]{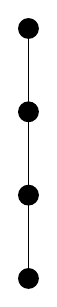}} \quad $\begin{pmatrix}
    0 & 0 & 0 & 0\\
    0 & 1 & 2 & 3\\
    0 & 0 & 0 & 0
\end{pmatrix}$    & \raisebox{-0.45\height}{\includegraphics[scale=0.4]{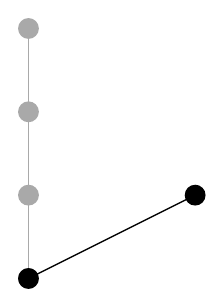}}&
$\begin{pmatrix}
    0 & a\\
    0 & b\\
    1 & 1
\end{pmatrix}$ \torre{$0\le b<a$}{$\gcd(a,b)=1$}& 4.1\\ 
\hline
\end{tabular}
\caption{Polytopes of width one with $4+2$ lattice points.}
\label{table:width1-(4,2)}
\end{table}

\subsubsection*{$\bullet$ $|P_0|=3$, $|P_1|=3$}
Either both $P_0$ and $P_1$ are unimodular triangles, or both are segments with three collinear points, or we have one of each. These three cases split in subcases, studied one by one in Table~\ref{table:width1-(3,3)}.
The rest of this section is devoted to the case when both $P_0$ and $P_1$ are  triangles and with no coplanarities, which deserves special attention.

\begin{table}[htbp]
\centering
\footnotesize
\begin{tabular}{|c|cc|c|}
\hline
  \rule{0pt}{3ex}     
  \textbf{$P_0=P \cap \{z=0\}$}             & \multicolumn{2}{c|}{\textbf{$P_1=P \cap \{z=1\}$}} & \textbf{O.~M.} \\ 
\hline
 \rule{0pt}{6ex} \begin{tabular}{c} \includegraphics[scale=0.4]{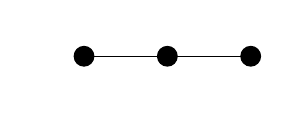}\\ $\begin{pmatrix}
    0 & 1 &-1\\
    0 & 0 & 0\\
    0 & 0 & 0
\end{pmatrix}$  \end{tabular}             & \raisebox{-0.45\height}{\includegraphics[scale=0.4]{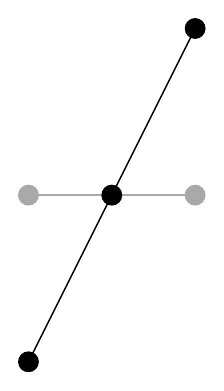}}&
$\begin{pmatrix}
    0 & b &-b\\
    0 & a & -a\\
    1 & 1& 1
\end{pmatrix}$ \torre{$0 \le b <a$}{$\gcd(a,b)=1$} &  2.1\\ 
\hline
  \multirow{8}{*}{
      \begin{tabular}{c} 
        \null \\ \null \\ \null\\ \null \\ \null \\ \null \\ \null \\ \null \\ \null\\
        \includegraphics[scale=0.4]{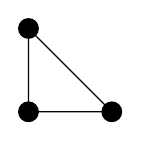}\\
        {$\left( 
          \begin{array}{ccc}
            0 & 1 & 0\\ 
            0 & 0 & 1\\
            0 & 0 & 0
          \end{array}
        \right)$} 
      \end{tabular}
    }
\rule{0pt}{6ex} & \raisebox{-0.45\height}{\includegraphics[scale=0.4]{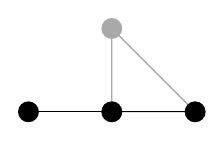}}&
$\begin{pmatrix}
    0 & 1 &-1\\
    0 & 0 & 0\\
    1 & 1& 1
\end{pmatrix}$ &  4.3\\ 
\cline{2-4} 
\rule{0pt}{6ex}                  &\raisebox{-0.45\height}{\includegraphics[scale=0.4]{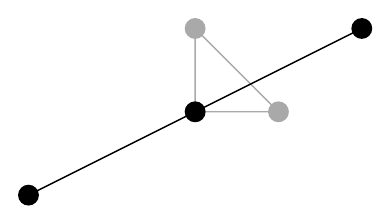}}&
$\begin{pmatrix}
    0 & a &-a\\
    0 & b & -b\\
    1 & 1& 1
\end{pmatrix}$ \torre{$0<b\le a$}{$\gcd(a,b)=1$} &  4.15\\ 
\cline{2-4} 
\rule{0pt}{6ex}                  & \raisebox{-0.45\height}{\includegraphics[scale=0.4]{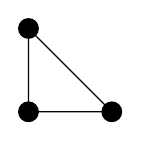}}&
$\begin{pmatrix}
    0 & 1 & 0\\
    0 & 0 & 1\\
    1 & 1& 1
\end{pmatrix}$ &  3.4\\ 
\cline{2-4} 
\rule{0pt}{6ex}                  & \raisebox{-0.45\height}{\includegraphics[scale=0.4]{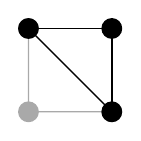}}&
$\begin{pmatrix}
    1 & 0 & 1\\
    0 & 1 & 1\\
    1 & 1& 1
\end{pmatrix}$ & 3.12 \\ 
\cline{2-4} 
\rule{0pt}{6ex}                  & \raisebox{-0.45\height}{\includegraphics[scale=0.4]{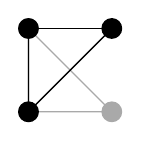}}&
$\begin{pmatrix}
    0 & 0 &1\\
    0 & 1 & 1\\
    1 & 1& 1
\end{pmatrix}$ & 4.14\\ 
\cline{2-4} 
\rule{0pt}{6ex}                  &  \raisebox{-0.45\height}{\includegraphics[scale=0.4]{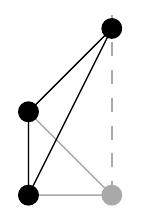}}&
$\begin{pmatrix}
    0 & 0 &1\\
    0 & 1 & a\\
    1 & 1& 1
\end{pmatrix}$ $a>1$ &  5.8\\ 
\cline{2-4} 
\rule{0pt}{6ex}                  &  \raisebox{-0.45\height}{\includegraphics[scale=0.4]{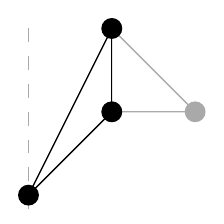}}&
$\begin{pmatrix}
    0 & 0 &-1\\
    0 & 1 & a\\
    1 & 1& 1
\end{pmatrix}$ $a>3$ & 5.15\\ 
\cline{2-4} 
\rule{0pt}{6ex}                  & \raisebox{-0.45\height}{\includegraphics[scale=0.4]{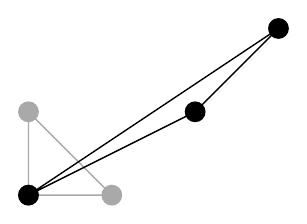}}&
$\begin{pmatrix}
    0 & a & c\\
    0 & b & d\\
    1 & 1& 1
\end{pmatrix}$ \begin{tabular}{c}$ad-bc=\pm1$ \\ $a,b,c,d>0$\\ $c+d>a+b$\\ \end{tabular} &  6.4*\\ 
\hline
\end{tabular}
\caption{Polytopes of width one with $3+3$ lattice points.}
\label{table:width1-(3,3)}
\end{table}

In the table, we first look at subcases where these triangles have edges parallel to one another, which produces exactly five different oriented matroids. We then look at what happens when no parallel edges are present. In this case the configuration is in general and convex position, so its oriented matroid must be one of 6.3 and 6.4. The following lemma proves that 6.3 can never arise:

\begin{lemma}
\label{lemma:NO-6.3}
Let $P$ be a lattice $3$-polytope of size $6$, consisting of two unimodular triangles in consecutive lattice planes, with no two parallel edges. Then its  oriented matroid is $6.4$ of Figure~\ref{fig:OM}.
\end{lemma}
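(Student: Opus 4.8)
The plan is to normalize coordinates, reduce the statement to an elementary arithmetic impossibility, and finish with a short inequality. First I would apply a $\Z$-equivalence fixing the planes $\{z=0\}$ and $\{z=1\}$ so that $P_0:=P\cap\{z=0\}=\conv\{(0,0,0),(1,0,0),(0,1,0)\}$. The hypothesis forces the six points into general position: any four of them that were coplanar would consist of two vertices of $P_0$ and two of $P_1$ (no three points of a unimodular triangle are collinear, and no three vertices of one triangle are coplanar with a vertex of the other), and that coplanarity is precisely an edge of $P_0$ being parallel to an edge of $P_1$ — which is excluded. Since moreover all six points are vertices of $P$ (each is a vertex of the facet $P_0$ or of the facet $P_1$) and $P$ has no interior lattice point, its oriented matroid is one of $6.3$ (the octahedron) or $6.4$; so it suffices to rule out the octahedron, and I assume for contradiction that $P$ is one.

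Then $P$ is simplicial with six vertices and eight facets; two are $P_0$ and $P_1$ and the other six are ``mixed'', each spanned by an edge of one of the two triangles and a vertex of the other. The tool I would use is the following, proved by inspecting the supporting plane through the three points (which meets $\{z=0\}$ and $\{z=1\}$ in parallel lines): writing $\pi\colon\R^3\to\R^2$ for the projection forgetting $z$, a triple $\{a_i,a_j,b_k\}$ with $\{a_i,a_j\}$ an edge of $P_0$ is a facet of $P$ if and only if $\pi(b_k)$ is the (necessarily unique, by non-parallelism) vertex of $\pi(P_1)$ farthest in the direction of the outward edge-normal of $a_ia_j$; symmetrically for triples $\{b_i,b_j,a_k\}$. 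In the octahedron the three non-edges form a perfect matching between the vertices of $P_0$ and those of $P_1$; I would label the vertices of $P_1$ as $b_1,b_2,b_3$ with $\{a_i,b_i\}$ a non-edge, and after an integer translation write $b_1=(0,0,1)$, $b_2=(p,q,1)$, $b_3=(r,s,1)$, so $ps-qr=\pm1$ by unimodularity of $P_1$. The six mixed facets are then forced to be $a_1a_2b_3,a_1a_3b_2,a_2a_3b_1$ and $b_1b_2a_3,b_1b_3a_2,b_2b_3a_1$; translating the first three through the tool (the outward edge-normals of $P_0$ being $(0,-1),(-1,0),(1,1)$) yields $s<0<q-s$, $p<0<r-p$, and $p+q<0$, $r+s<0$, while the last three first force the sign $ps-qr=+1$ (the sign $-1$ would force $p>0$, contradicting $p<0$) and are then automatically satisfied.

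It remains to see this system is inconsistent. From $p,s<0$ one has $ps\ge1$, so $qr=ps-1\ge0$, and since $q\ne0\ne r$ (non-parallelism of the edges $b_1b_2,b_1b_3$ of $P_1$ against $a_1a_2,a_1a_3$ of $P_0$) in fact $qr\ge1$, so $q$ and $r$ have a common sign. If both are positive, $p+q<0$ and $r+s<0$ give $-p\ge q+1$ and $-s\ge r+1$, whence $ps=(-p)(-s)\ge(q+1)(r+1)\ge qr+3>ps$; if both are negative, $s<q$ and $p<r$ give $-s\ge|q|+1$ and $-p\ge|r|+1$, whence again $ps\ge(|q|+1)(|r|+1)\ge qr+3>ps$. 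Either way we contradict $ps=qr+1$, so $P$ is not an octahedron and its oriented matroid is $6.4$. I expect the only genuinely delicate point to be the middle paragraph's bookkeeping — matching the octahedron's six mixed facets to conditions on $(p,q,r,s)$ and verifying that the three ``$b_ib_ja_k$'' facets add nothing beyond $ps-qr=1$, so that the contradiction must be squeezed out of the remaining inequalities together with $ps-qr=1$ and $q,r\ne0$ — after which the arithmetic contradiction is immediate.
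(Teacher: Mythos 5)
Your proof is correct, but it finishes differently from the paper's. Both arguments hinge on the same basic observation --- a mixed triple $\{a_i,a_j,b_k\}$ is a facet exactly when the outward normal of the edge $a_ia_j$ lies in the normal cone of $b_k$, i.e.\ $\pi(b_k)$ is the (unique, by non-parallelism) extremal vertex in that direction, which is your ``tool'' --- but the paper then passes entirely to the dual picture: it shows that octahedral combinatorics forces the six primitive edge normals of the two triangles, cyclically ordered by angle, to alternate between $T$ and $S$, and kills this with a single determinant estimate ($\det(s_2,s_3)=ad+bc\ge 2$ contradicts unimodularity of $S$). You instead stay in primal coordinates: you use that the non-edges of the octahedron form a perfect matching, write out all six mixed-facet conditions as inequalities in the entries $p,q,r,s$ of the second triangle, and contradict $|ps-qr|=1$ by the integer estimate $ps\ge qr+3$. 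Your route is longer and more computational but completely elementary, and it also makes explicit the reduction (general position, six vertices, hence oriented matroid $6.3$ or $6.4$) that the paper only states in the text surrounding the lemma. Two small remarks: strictly speaking you cannot keep $P_0$ standard and also translate $b_1$ to $(0,0,1)$, but this is harmless because every condition you use (edge normals of $P_0$, differences of vertices within each triangle) is translation-invariant; and your detour through the three $b$-facets to pin down the sign $ps-qr=+1$ is correct but unnecessary, since the same estimate $ps\ge qr+3$ (which needs only the three $a$-facet inequalities, $p,s<0$ and $q,r\ne 0$) also contradicts $ps-qr=-1$, where $qr=ps+1\ge 2$ already forces $q$ and $r$ to have a common sign.
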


\begin{proof}
Let $T=\conv\{p_1,p_2,p_3\}$ be a triangle with primitive edges in the plane $z=0$. For $i=1,2,3$, let $t_i$ be the outward normal vector of the edge $p_jp_k$, where $\{i,j,k\}=\{1,2,3\}$, normalized to be primitive. (Each $t_i$ is the  $90$ degree rotation of the corresponding edge vector). Clearly for each pair in $i,j \in\{1,2,3\}$, $|\det(t_i,t_j)|$ equals the normalized volume of $T$.

Let now $S=\conv\{q_i\}$ be another triangle with primitive edges in the parallel plane $z=1$, with no edges parallel to those in $T$. Let $s_i$ be the corresponding normal vectors as before. We now have six distinct normal vectors.

We claim that if the oriented matroid of $P=\conv(T \cup S)$ was $6.3$, then the sequence of the six normal vectors, cyclically ordered according to their angle, 
should alternate one vector from each triangle. For this, the following figure shows the (dual of) oriented matroid $6.3$ and the Schlegel diagram of its realization (an octahedron):

\centerline{\includegraphics[width=0.35\textwidth]{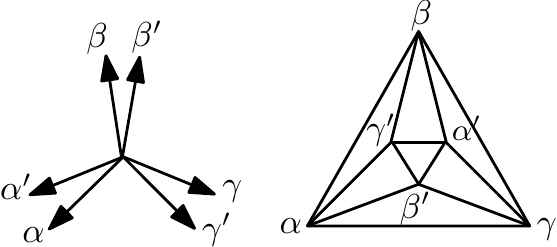}}

The six triangles in the region between $\alpha\beta\gamma$ and $\alpha'\beta'\gamma'$ in the Schlegel diagram correspond to the six normal vectors. The fact that the triangles alternate between using two vertices from $S$ and from $T$ implies that the normal vectors alternate (to see this, observe that two points $p_i$ and $p_j$ form a triangle with a point $q_k$ if and only if the normal vector of $p_ip_j$ belongs to the normal cone of $q_k$).

Suppose now that our triangles are unimodular and let us see that such an alternation of normal vectors is impossible.
We can now assume without loss of generality that the normal vectors for $T$ are $t_1=(1,0)$, $t_2=(0,1)$ and $t_3=(-1,-1)$ and that each $s_i$ is between $t_j$ and $t_k$ where $\{i,j,k\}=\{1,2,3\}$.
Then $s_3=(c,d)$ with $c,d >0$ and, by symmetry, we assume that $s_2=(a,-b)$ with $a> 0$, $b>0$ (if $s_2=-t_2$, a coplanarity arises). 
In order for $S$ to be unimodular we would need $\det(s_2,s_3)= \pm 1$, but $\det(s_2,s_3)=ad+bc \ge 2$ since $a,b,c,d>0$.
\end{proof}

Once we know the oriented matroid to be  $6.4$, and that the sequence of the normal vectors (see the proof above) cannot alternate between $T$ and $S$,  this sequence must contain two consecutive $s$'s and two consecutive $t$'s. The next figure shows the dual oriented matroid and its Schlegel diagram.

\centerline{
\includegraphics[width=0.35\textwidth]{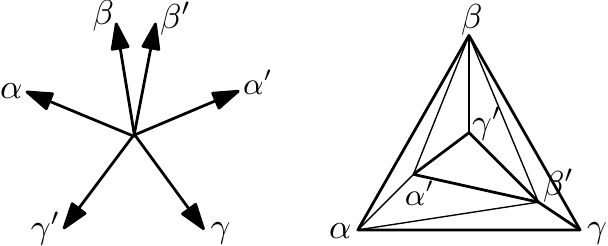}
}

In terms of the vertex cones of the triangles this implies that the vertex cone of $\gamma$ in $T$ contains two edge directions of $S$, the vertex cone in $\alpha$ contains one, and the vertex cone in $\beta$ contains none. Let us see that this implies the description in the table for the space of parameters $a,b,c,d$ to be complete.

Without loss of generality, $T$ has vertices $p_1=(0,0,0)$, $p_2=(1,0,0)$ and $p_3=(0,1,0)$, and 
$S$ has vertices $p_4=q_1=(0,0,1)$, $p_5=q_2=(a,b,1)$ and $p_6=q_3=(c,d,1)$, with $ad-bc=1$ for $S$ to be unimodular. We assume
two edge vectors of $S$ contained in the vertex cone of $p_1$, so that $a,b,c,d >0$. And we assume the vertex cone in $p_3$ to contain the remaining edge vector, which gives $c+d >a+b$.
%
%

\begin{remark}
By Theorem~\ref{thm:Howe}, all lattice $3$-polytopes with no lattice points other than its vertices must have width $1$.
This is an oriented matroid property, satisfied by the following oriented matroids from Figure~\ref{fig:OM}: 3.4, 3.12, 4.8, 4.14, 4.20. 5.3, 5.7, 5.8, 5.14, 5.15, 6.3 and 6.4. In particular, all the lattice $3$-polytopes of size six corresponding to any of those oriented matroids are included in the classification done in this section. This implies that the following six oriented matroids are not realizable by the six points of a lattice $3$-polytope of size six: 4.8, 4.20, 5.3, 5.7, 5.14, and 6.3. (The fact that 6.3 cannot be realized is the content of Lemma~\ref{lemma:NO-6.3}).
\end{remark}

\begin{remark}
\label{rem:width1-redundancy}
The three possibilities for partitioning the six points into two consecutive lattice planes
are not mutually exclusive, since a configuration may have width 1 with respect to two or more functionals. In particular, Tables~\ref{table:width1-(5,1)}-\ref{table:width1-(3,3)} contain some redundancy. However, the oriented matroid allows us to easily detect them:

\begin{itemize}
\item The oriented matroids 3.4, 4.3 and 4.14 appear in two or three of the tables. All appearances of each correspond to one and the same lattice polytope.

\item The oriented matroid 4.1 appears in Tables~\ref{table:width1-(5,1)} (as a single polytope) and~\ref{table:width1-(3,3)} (as an infinite family). The former is the special case $a=1$ of the latter.

\item The oriented matroid 5.8 appears in Tables~\ref{table:width1-(4,2)} and~\ref{table:width1-(3,3)}. Both appearances produce infinite families, but the latter (depending on a single parameter) is the special case $b=1$ of the former.

\item The oriented matroid 4.15 appears in Tables~\ref{table:width1-(4,2)} and~\ref{table:width1-(3,3)}. Both appearances produce infinite families. These two families are not only different but actually disjoint: in Table~\ref{table:width1-(3,3)} the three collinear points span a segment that is parallel to the triangle spanned by the other three, in Table~\ref{table:width1-(4,2)} the same segment and triangle are never parallel.
\end{itemize}

That is: after removing redundancies, all $3$-polytopes of size six and width one fall into nine infinite series (two of them for the same oriented matroid 4.15) plus twelve individual polytopes.
\end{remark}

\medskip


\section{Case A: Polytopes with $5$ coplanar points}
\label{sec:5coplanar}

If  $P$ has five points $p_1,\dots,p_5$ in a lattice plane (for example $z=0$) and the sixth point $p_6$ is outside this plane, the first five points form one of the six $2$-dimensional configurations displayed in Figure~\ref{fig:2dim-5}. 
 
\begin{figure}[htb]
\begin{center}
\includegraphics[scale=0.6]{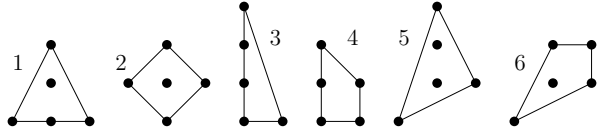}
\caption{The 2-dimensional polygons with five points}
\label{fig:2dim-5}
\end{center}
\end{figure}

In cases 3, 4 and 6 it is easy to conclude that $P$ has width one: 
\begin{itemize}
\item In cases 4 and 6, the five coplanar points contain the vertices of a quadrilateral, say $p_1p_2p_3p_4$. The classification of polytopes of size $5$ implies that in order for $P^5=\conv\{p_1,p_2,p_3,p_4,p_6\}$ not to have extra lattice points we need it to have width one with respect to the quadrilateral facet, so the same happens for $P$.

\item In case 3 let $p_1$, $p_2$, $p_3$ and $p_4$ be the four collinear points. The classification of polytopes of size $5$ implies that in order for $P^4=\conv\{p_1,p_2,p_3,p_5,p_6\}$ not to have extra lattice points we need it to have width one with respect to the edges $p_1p_2p_3$ and $p_5p_6$, so the same happens for $P$.
\end{itemize}

In cases 1, 2 and 5 we consider the following  coordinates for the six points:
\[
p_1=(0,0,0), \ \ p_2=(1,c,0),\ \  p_3=(0,1,0), \ \ p_4=(-1,d,0), \ \ p_5=(0,2,0), \ \ p_6=(a,b,q).
\]
Here $c$ and $d$ depend solely on which case we are in, $a$ and $b$ can be considered modulo $q$, and we assume $q>1$ since $q=1$ implies width one.

In all cases, both $P^2=\conv\{p_1,p_3,p_4,p_5,p_6\}$ and $P^4=\conv\{p_1,p_2,p_3,p_5,p_6\}$ are in the conditions of Lemma~\ref{thm:lemma(2,1)} (under a certain change of coordinates). Moreover, the lemma implies that in order for $P^4$ and $P^2$ not to have extra lattice points we need, respectively, $a \equiv 1 \pmod q$ and $a \equiv -1 \pmod q$. That is, we can assume $q=2$ and $a=1$. Once we know this, there are only two possibilities for $p_6$, namely $p_6=(1,0,2)$ and $p_6=(1,1,2)$, which we discuss case by case:

\begin{itemize}
\item \textbf{Case 1:} $c=d=0$. If $p_6=(1,0,2)$ then the midpoint of $p_2p_6$ is integer. Thus the only possibility is $p_6=(1,1,2)$, which
gives configuration A.1 of Table~\ref{table:6points-I}.

\item \textbf{Case 2:} $c=d=1$. If $p_6=(1,1,2)$ then the midpoint of $p_2p_6$ is integer. Thus the only possibility is $p_6=(1,0,2)$, which
produces configuration A.2 of Table~\ref{table:6points-I}.

\item \textbf{Case 5:} $c=0$ and $d=-1$, so both possibilities create a new lattice point at $z=1$; the midpoint of $p_4p_6$ for $p_6=(1,1,2)$, and 
the midpoint of $p_2p_6$ for $p_6=(1,0,2)$.
\end{itemize}
\medskip

In summary:

\begin{theorem}
\label{thm:6points-1}
Among the lattice $3$-polytopes of size six with $5$ coplanar points, there are exactly $2$ equivalence classes of width two, and none of larger width, as shown in Table~\ref{table:6points-I}. Both are non-dps.
\end{theorem}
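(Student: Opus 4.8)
The plan is to reduce the statement to a short finite case analysis that leverages two already-available tools: the classification of lattice $2$-polytopes of size five (Figure~\ref{fig:2dim-5}) and Lemma~\ref{thm:lemma(2,1)}, the characterization of when a signature-$(2,1)$ configuration of size five has no extra lattice points. First I would observe that if $P$ has size six and five of its points lie in a lattice plane, say $z=0$, then those five points form one of the six planar configurations in Figure~\ref{fig:2dim-5}, and the sixth point $p_6=(a,b,q)$ lies off that plane; we may assume $q>1$, since $q=1$ forces width one. I then split into the two easy families (cases 3, 4 and 6) and the delicate family (cases 1, 2 and 5).

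For cases 3, 4 and 6 the argument is purely structural: in cases 4 and 6 the five coplanar points contain a lattice quadrilateral $p_1p_2p_3p_4$, and the size-five classification (Table~\ref{table:5points}) forces $P^5=\conv\{p_1,p_2,p_3,p_4,p_6\}$ to have width one with respect to that quadrilateral facet, hence $P$ does too; similarly in case 3 the four collinear coplanar points force $P^4$ to have width one with respect to the pencil of planes containing that line and $p_5p_6$. So none of these yields a polytope of width $>1$.

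For cases 1, 2 and 5 I would fix the normalized coordinates $p_1=(0,0,0)$, $p_2=(1,c,0)$, $p_3=(0,1,0)$, $p_4=(-1,d,0)$, $p_5=(0,2,0)$, $p_6=(a,b,q)$, where $(c,d)=(0,0),(1,1),(0,-1)$ respectively, and $a,b$ may be taken modulo $q$. The key point is that both $P^2=\conv\{p_1,p_3,p_4,p_5,p_6\}$ and $P^4=\conv\{p_1,p_2,p_3,p_5,p_6\}$ have signature $(2,1)$ (the collinearity $p_1,p_3,p_5$ together with the off-plane point), so Lemma~\ref{thm:lemma(2,1)} applies after a suitable change of coordinates. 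Writing out that lemma's congruence conditions for each of $P^2$ and $P^4$ gives $a\equiv -1$ and $a\equiv 1\pmod q$ respectively, forcing $q=2$ and $a=1$. This leaves only $p_6\in\{(1,0,2),(1,1,2)\}$. I then rule out the bad choice in each case by exhibiting an integer midpoint of an edge $p_2p_6$ or $p_4p_6$: this kills $(1,0,2)$ in case 1, kills $(1,1,2)$ in case 2, and kills both in case 5. What survives is exactly $p_6=(1,1,2)$ in case 1 (configuration A.1) and $p_6=(1,0,2)$ in case 2 (configuration A.2), each unique up to $\Z$-equivalence; one checks directly that these two have size six and width two, and the bad-midpoint observation shows they are non-dps (a collinear triple $p_i,\text{midpoint},p_6$ or an integer parallelogram would be present — more simply, they contain five coplanar points, which already precludes dps).

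The main obstacle is making the application of Lemma~\ref{thm:lemma(2,1)} to $P^2$ and $P^4$ precise: one must exhibit the explicit unimodular change of coordinates putting the $(2,1)$ circuit $\{p_1,p_3,p_5\}$ in the form $\{o,e_2,-e_2\}$ with the fourth point at $e_1$, and then read off the congruence the fifth point $p_6$ must satisfy; getting the sign of $a$ right (so that $P^4$ gives $a\equiv 1$ and $P^2$ gives $a\equiv -1$) is where care is needed, but it is routine linear algebra once the coordinates are laid out as above.
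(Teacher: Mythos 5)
Your proposal is correct and follows essentially the same route as the paper: the same split into cases 3, 4, 6 (forced width one via the size-five classification) versus cases 1, 2, 5, the same normalized coordinates with $p_6=(a,b,q)$, the same double application of Lemma~\ref{thm:lemma(2,1)} to $P^2$ and $P^4$ forcing $q=2$, $a=1$, and the same integer-midpoint eliminations leaving exactly A.1 and A.2. The only difference is cosmetic bookkeeping (making the unimodular change of coordinates explicit), which the paper also leaves implicit.
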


\bigskip


\section{Cases B and C: Polytopes containing a $(3,1)$-circuit (but no five coplanar points)}
\label{sec:(3,1)coplanarity}
Without loss of generality, we assume they contain the standard $(3,1)$-circuit: $p_1=o$, $p_2=e_1$, $p_3=e_2$ and $p_4=-e_1-e_2$. We treat separately the case of the other two points lying on the same side or on opposite sides of this circuit.

\medskip

\subsection{Case B: Polytopes with a $(3,1)$-circuit and the other two points on opposite sides} 
\label{sec:(3,1)coplanarity1}
In this case  $P^5=\conv (A \setminus\{p_5\})$ and $P^6=\conv (A \setminus\{p_6\})$ have to be polytopes of size $5$ and signature $(3,1)$. By Lemma~\ref{lemma:(3,1)thm}, $p_5$ and $p_6$ are both at lattice distance $1$ or $3$ from the plane of the coplanarity. This produces three cases, that we treat separately:
\medskip

\subsubsection{Both points at distance one}
\label{sec:B.i}
Without loss of generality we take $p_5=(0,0,1)$ and $p_6=(a,b,-1)$, $a,b \in \Z$ and use the \emph{parallel-planes} method. 

The configuration is  contained in the three planes $z=-1,0,1$, with
a single point in each of $z=\pm 1$, and $P\cap \{z=0\}$ is the convex hull of the $(3,1)$-circuit plus the mid-point $(a/2,b/2,0)$ of the edge $p_5p_6$.
Without loss of generality (because of the $S_3$-symmetries present in $P^6$) the intersection point can be assumed to be in the region $0 \le x \le y$. In order for $(1,1,0)$ and $(0,2,0)$ not to be in $P\cap \{z=0\}$, the region is bounded by $x<1$ and $y<3x+2$ (non-shaded area in Figure~\ref{fig:CaseGH}). This gives ten possibilities for the pair $(a,b)$, namely:
\begin{center}
\begin{tabular}{|c|c|c|cc|cc|c|c|c|c|}
\hline
\textbf{ Id.} & $B.1$ & $B.2$ & $B.3$ & $B.4$ & $B.5$ & $B.6$ & $B.7$ & $B.8$ & $B.9$ & $B.10$\\
\hline
$a$ & $1$ & $0$ & $1$ & $1$ & $1$ & $1$ & $0$ & $0$ & $1$ & $0$\\
\hline
$b$ & $4$ & $3$ & $2$ & $3$ & $5$ & $6$ & $2$ & $1$ & $1$ & $0$\\
\hline
\end{tabular}
\end{center}

All the options have automatically size $6$ since no more points arise at $P \cap \{z=0\}$. As shown in Table~\ref{table:6points-I}, they all have different volume vectors and so they give different equivalent classes.

Notice that the intersection point of the edge $p_5p_6$ with the plane $z=0$ determines the oriented matroid, depending on whether it is contained or not in one or more straight lines spanned by the edges in $z=0$. The ten configurations in the table are separated according to their oriented matroid.
\medskip

\subsubsection{One point at distance one and the other at distance three}
\label{sec:B.ii}

Without loss of generality $p_5=(0,0,1)$ and $p_6=(a,b,-3)$ with (by Lemma~\ref{lemma:(3,1)thm}) $a \equiv -b \equiv \pm 1 \pmod 3$.  

The configuration is contained between the hyperplanes $z=1$ and $z=-3$. The intersection point of the edge $p_5p_6$ with $z=0$ is $(a/4,b/4,0)$. As before, the intersection point can be assumed in the region bounded by $0 \le x \le y$, $x<1$ and $y<3x+2$. This gives us $44$ options for the pair $(a,b)$, displayed in Figure~\ref{fig:CaseGH}, and separated according to the oriented matroid in the following table:

\begin{center}
\begin{tabular}{|c|c|c|c|c|c|c|c|c|c|ccc|}
\hline
$a$ & $0$ & $0$ & $0$ & $0$ & $1$ & $1$ & $1$ & $2$ & $3$ & $1$ & $2$ & $3$\\  
\hline
$b$ & $0$ & $\{1,2,3\}$ & $4$ & $\{5,6,7\}$ & $1$ & $2$ & $3$ & $2$ & $3$ & $6$ & $8$ & $10$ \\  
\hline
\end{tabular}
\end{center}

\begin{center}
\begin{tabular}{|c|ccc|ccc|}
\hline
$a$ & $1$ & $2$ & $3$ & $1$ & $2$ & $3$\\  
\hline
$b$ & $\{4,5\}$ & $\{3,...,7\}$ & $\{4,...,9\}$ & $\{7,...,10\}$ & $\{9,...,13\}$ & $\{11,...,16\}$\\  
\hline
\end{tabular}
\end{center}

\begin{figure}[htb]
\begin{center}
\includegraphics[scale=1]{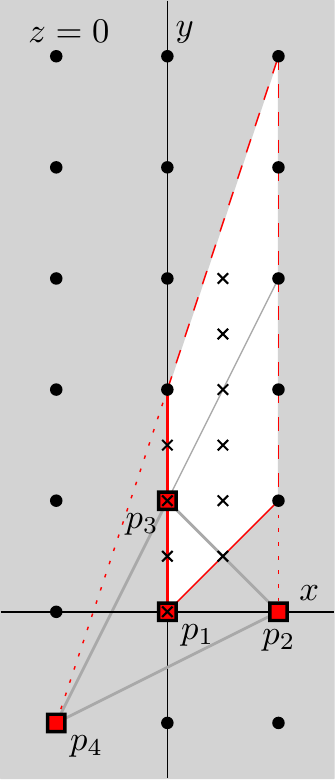} \qquad\qquad \includegraphics[scale=1]{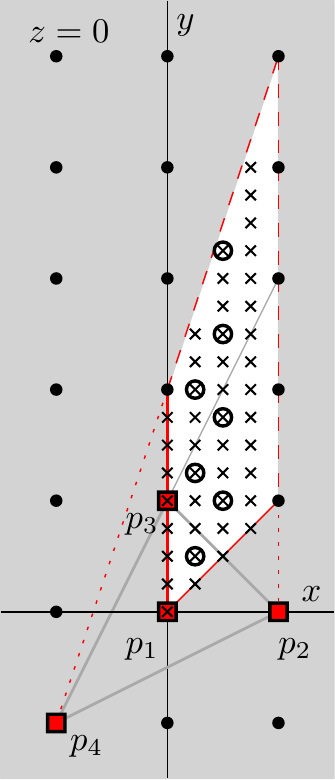}\\
\quad Sec.~\ref{sec:B.i} \hskip 3.5cm Sec.~\ref{sec:B.ii}
\caption{The cases of sections~\ref{sec:B.i} and~\ref{sec:B.ii}. Red squares represent the points $p_1$, $p_2$, $p_3$ and $p_4$ of $P$ in the displayed plane $z=0$. Black dots are the lattice points in the plane and black crosses represent the possible intersection points of the edge $p_5p_6$ and the plane $z=0$. In case~\ref{sec:B.ii}, the circled black crosses are the possibilities for which  $a \equiv -b \equiv \pm 1 \pmod 3$, as required by Lemma~\ref{lemma:(3,1)thm}.}
\label{fig:CaseGH}
\end{center}
\end{figure}

Among these $44$ possibilities, only the following five satisfy $a \equiv -b \equiv \pm 1 \pmod 3$, as required by Lemma~\ref{lemma:(3,1)thm}, and $\gcd(a,b,4)=1$, as required for the edge $p_5p_6$ to be primitive:

\begin{center}
\begin{tabular}{|c|c|cc|cc|}
\hline
$a$ & $1$ & $1$ & $2$ & $1$ & $2$\\
\hline
$b$ & $2$ & $5$ & $7$ & $8$ & $13$\\
\hline
\end{tabular}
\end{center}

Finally, we need to check which of this five possibilities do not produce additional lattice points in the planes
$z=-1$ or $z=-2$. For this we consider the following triangulations of $P$, depending on the oriented matroid, and check emptiness of each tetrahedron in the triangulation:

\begin{itemize}
\item If $(a,b)=(1,2)$, a triangulation is $P= P^5 \cup P^6$ and, by construction, $P$ has size $6$. 
\item If $(a,b)=(1,5)$ or $(2,7)$, a triangulation is $P= P^5 \cup P^6 \cup T_{2356}$. $T_{2356}$ is empty for $(1,5)$, but not for $(2,7)$.
\item If $(a,b)=(1,8)$ or $(2,13)$, a triangulation is $P= P^5 \cup P^6 \cup T_{2356} \cup T_{3456}$. Both $T_{2356}$ and $T_{3456}$ are empty for $(1,8)$, but $T_{2356}$ is not empty for $(2,13)$.
\end{itemize}

The three (non-equivalent) possibilities for $(a,b)$ that give size $6$ are $(1,8)$, $(1,2)$ and $(1,5)$, labeled B.11, B.12 and B.13 in Table~\ref{table:6points-I}.
\smallskip

\subsubsection{Both points at distance three}
\label{sec:B.iii}

Without loss of generality we take $p_5=(1,2,3)$ and $p_6=(a,b,-3)$ with $a \equiv -b \equiv \pm 1 \pmod 3$. 

\noindent In this case the configuration is contained between the hyperplanes $z=3$ and $z=-3$, and the intersection point of the edge $p_5p_6$ with $z=0$ is $(\frac{a'}{2},\frac{b'}{2},0)=(\frac{a+1}{2},\frac{b+2}{2},0)$. In this case, $P^6$ has less symmetries than before and we can only assume without loss of generality that this intersection point lies in $x,y \ge 0$. This time, in order for $(2,0,0)$, $(1,1,0)$ and $(0,2,0)$ not to be in $P\cap \{z=0\}$, the region is divided in two: either $x<1$ and $y<3x+2$, or $y<1$ and $x<3y+2$. This gives us $18$ options for the pair $(a',b')$, displayed in Figure~\ref{fig:CaseI} and separated according to the oriented matroid in the following table:

\begin{figure}[htb]
\begin{center}
\raisebox{-0.5\height}{\includegraphics[scale=0.9]{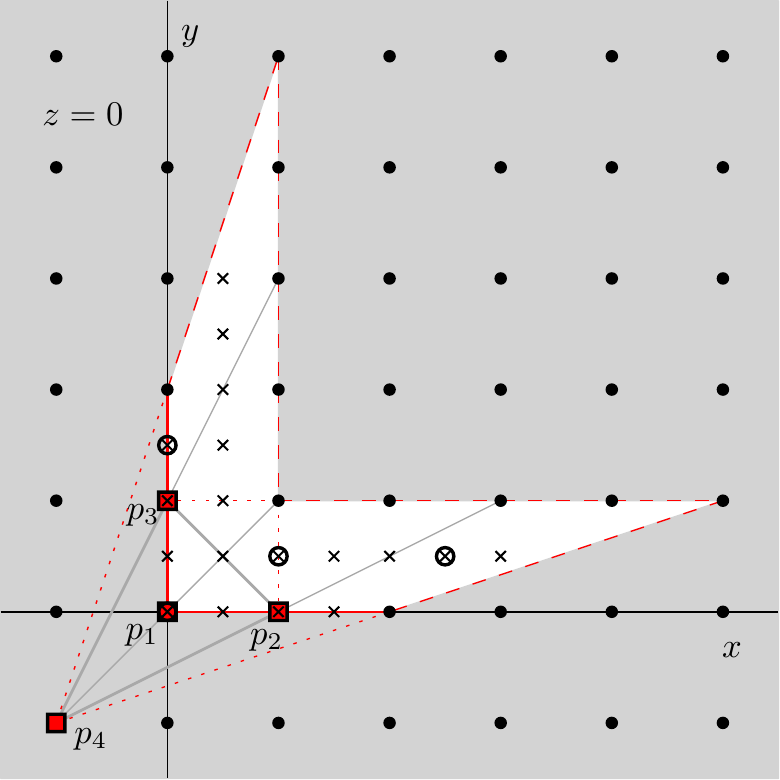}}
\qquad \qquad
\begin{tabular}{|c|c|}
  \hline
   $a'$& $b'$\\
\hline
  \ $0$ \ & $0$ \\
\hline
  \ $0$ \ & $1$ \\
  \ $1$ \ & $0$ \\
\hline
  \ $0$ \ & $2$ \\
  \ $2$ \ & $0$ \\
\hline
  \ $0$ \ & $3$ \\
  \ $3$ \ & $0$ \\
\hline
  \ $1$ \ & $1$ \\
\hline
  \ $1$ \ & $\{2,3\}$ \\
  \ $\{2,3\}$ \ & $1$ \\
\hline
  \ $1$ \ & $4$ \\
  \ $4$ \ & $1$ \\
\hline
  \ $1$ \ & $\{5,6\}$ \\
  \ $\{5,6\}$ \ & $1$ \\
\hline
\end{tabular}
\caption{The case of section~\ref{sec:B.iii}. Red squares represent the points $p_1$, $p_2$, $p_3$ and $p_4$ of $P$ in the displayed plane $z=0$. Black dots are the lattice points in the plane and black crosses represent the possible intersection points of the edge $p_5p_6$ and the plane $z=0$. The circled black crosses are the remaining possibilities after demanding $a \equiv -b \equiv \pm 1 \pmod 3$ (Lemma~\ref{lemma:(3,1)thm}).}
\label{fig:CaseI}
\end{center}
\end{figure}

After considering the restrictions $a \equiv -b \equiv \pm 1 \pmod 3$ we are left with only $4$ possibilities:

\begin{center}
\begin{tabular}{|c|c|c|c|c|}
\hline
$(a',b')$ & $(0,0)$ & $(0,3)$ & $(2,1)$ & $(5,1)$\\
\hline
$(a,b)$ & ${\bf (-1,-2)}$ & ${\bf (-1,1)}$ & $(1,-1)$ & $(4,-1)$\\
\hline
\end{tabular}
\end{center}

We can discard $(a,b)=(1,-1)$ and $(a,b)=(4,-1)$ because in this cases $\gcd(a-1,b-2,6)=3$, which implies the edge $p_5p_6$ has lattice points at heights $\pm 1$. This leaves us with two possibilities, marked in bold face in the previous table, and in both we verify that $P$ has no extra lattice points by triangulating it:

\begin{itemize}
\item If $(a',b')=(0,0)$, a triangulation is $P= P^5 \cup P^6$ so $P$ has automatically size $6$. 
\item If $(a',b')=(0,3)$, a triangulation is $P= P^5 \cup P^6 \cup T_{2356} \cup T_{3456}$ and both $T_{2356}$ and $T_{3456}$ are empty.
\end{itemize}

The two possibilities are labelled B.14 and B.15 in Table~\ref{table:6points-I}.

\medskip

\subsection{Case C: Polytopes with a $(3,1)$-circuit and the other two points on the same side} 
\label{section:3-1-sameside}
Observe that a configuration may contain more than one $(3,1)$-circuit. If one of them leaves the other two points in opposite sides we have already dealt with it, so here we assume that 
all the $(3,1)$ coplanarities leave the two other points at the same side. 

 Without loss of generality, we take the two extra points  in the halfspace $z >0$ and with $p_5$ having $z$-coordinate less than or equal to $p_6$. 
 We distinguish the cases where both $p_5$ and $p_6$ are vertices of $P$, or $p_5$ is in the convex hull of the other five points.

\subsubsection{Only $p_6$ is a vertex}
\label{sec:C.i}

Then, $p_5$ lies in the convex hull of $P^5=\conv (A\setminus \{p_5\})$. A priori it can lie either along an edge $p_ip_6$, in the interior of a triangle $p_ip_jp_6$, or in the interior of a tetrahedron $p_ip_jp_kp_6$, where $i,j,k\in\{1,2,3,4\}$.

\begin{lemma}
If $p_5$ lies in the interior of a triangle then we are in case B.
\end{lemma}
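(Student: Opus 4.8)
We are in Case C: there is a $(3,1)$-circuit $p_1=o$, $p_2=e_1$, $p_3=e_2$, $p_4=-e_1-e_2$ in the plane $z=0$, both $p_5$ and $p_6$ lie strictly above it (say $0<z(p_5)\le z(p_6)$), $p_6$ is a vertex and $p_5$ lies in the interior of some triangle $p_ip_jp_6$ with $i,j\in\{1,2,3,4\}$. We want to conclude that $A$ then in fact contains a $(3,1)$-circuit leaving the other two of its six points on opposite sides, i.e. that we are in Case B. The idea is to show that $p_5$ being interior to a $2$-face-type triangle $p_ip_jp_6$ forces a new coplanarity among four of the points, and that this coplanarity is again of type $(3,1)$ but now splits the remaining two points to opposite sides.

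First I would record the defining condition: $p_5 \in \operatorname{int}(\conv\{p_i,p_j,p_6\})$ means $p_5$ is a strictly positive affine combination $p_5=\lambda p_i+\mu p_j+\nu p_6$ with $\lambda,\mu,\nu>0$, $\lambda+\mu+\nu=1$; in particular the four points $p_i,p_j,p_5,p_6$ are coplanar, and their unique affine dependence has signs $(+,+,+,-)$ after moving $p_5$ to the other side — that is, $\{p_i,p_j,p_5,p_6\}$ is a $(3,1)$-circuit with $p_5$ as the ``lonely'' point (its coefficient has the sign opposite to the other three). So $A$ contains a second $(3,1)$-circuit, namely $C'=\{p_i,p_j,p_5,p_6\}$. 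Now I would look at where the two points of $A$ not in $C'$, namely $p_k$ and $p_\ell$ with $\{i,j,k,\ell\}=\{1,2,3,4\}$, sit relative to the plane $H'$ spanned by $C'$. Since $p_5$ and $p_6$ both have $z>0$ while $p_k,p_\ell$ both lie in $z=0$, and since $p_i,p_j$ also lie in $z=0$, the plane $H'$ is not horizontal (it contains $p_i,p_j$ at height $0$ and $p_6$ at height $>0$), so $H'\cap\{z=0\}$ is the line through $p_i$ and $p_j$. The sign of the functional defining $H'$ on $p_k$ and on $p_\ell$, restricted to $z=0$, is therefore just the side of the line $p_ip_j$ on which $p_k$, resp.\ $p_\ell$, lies. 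Among the four points $\{p_1,p_2,p_3,p_4\}=\{o,e_1,e_2,-e_1-e_2\}$ of the original $(3,1)$-circuit, any line through two of them separates the other two — this is exactly the content of being a $(3,1)$-circuit ($o$ and $-e_1-e_2$ are on opposite sides of the line $e_1e_2$; $e_1$ and $e_2$ are on opposite sides of the line through $o$ and $-e_1-e_2$; etc.). Hence $p_k$ and $p_\ell$ lie on opposite sides of the line $p_ip_j$, and therefore on opposite sides of $H'$.

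So the second $(3,1)$-circuit $C'$ has its two ``other'' points $p_k,p_\ell$ on opposite sides: this is precisely the hypothesis of Case B (Section~\ref{sec:(3,1)coplanarity1}), applied to the circuit $C'$ rather than the original one. I would finish by noting that the opening sentence of Case C explicitly says we may assume \emph{all} $(3,1)$-coplanarities in $A$ leave the other two points on the same side; the existence of $C'$ contradicts that assumption, so this sub-case does not occur within Case~C and has already been handled in Case~B.

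**Main obstacle.** The one point needing genuine care is verifying that the plane $H'$ is never horizontal and that its intersection with $z=0$ is exactly the line $p_ip_j$ — equivalently, that $p_6\notin\{z=0\}$, which holds because $z(p_6)\ge z(p_5)>0$. A secondary subtlety: I have been assuming $p_5$ lies in the \emph{relative} interior of the triangle, i.e.\ $\lambda,\mu,\nu$ are all strictly positive and in particular $p_5$ is not on the edge $p_ip_6$, $p_jp_6$ or $p_ip_j$; the boundary cases (where $p_5$ lies on an edge) are the ``along an edge $p_ip_6$'' alternative mentioned just before the lemma and are treated separately, so the statement as phrased (``interior of a triangle'') is exactly the non-degenerate case and the sign pattern $(+,+,+,-)$ is forced. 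No heavy computation is involved; the argument is purely about the combinatorics of the $(3,1)$-circuit and which side of a line its points occupy.
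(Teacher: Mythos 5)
Your key separation claim is false, and the gap sits exactly where the lemma needs work. For the standard $(3,1)$-circuit $p_1=o$, $p_2=e_1$, $p_3=e_2$, $p_4=-e_1-e_2$ it is \emph{not} true that the line through any two of the four points separates the other two: this holds only for the three lines through the interior point $p_1$. The line through $p_2$ and $p_3$ is $x+y=1$, and both $p_1$ (value $0$) and $p_4$ (value $-2$) lie strictly on the same side of it; the same happens for the lines $p_2p_4$ and $p_3p_4$. This is forced by the circuit structure itself: $p_1$ is interior to the triangle $p_2p_3p_4$, so it lies on the same side of each edge line as the opposite vertex. Consequently your argument proves the lemma only when the triangle $p_ip_jp_6$ containing $p_5$ includes $p_1$; when $\{i,j\}\subset\{2,3,4\}$, the new $(3,1)$-circuit $\{p_i,p_j,p_5,p_6\}$ leaves the remaining two points $p_1$ and $p_k$ on the \emph{same} side of its plane, so it does not exhibit a circuit of type B, and your proof says nothing in that sub-case.

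The paper closes that sub-case by a different argument, which you would need to supply. Assuming without loss of generality $p_5\in\operatorname{int}(p_2p_3p_6)$, the size-$6$ hypothesis forces $p_5$ to be the centroid of $p_2p_3p_6$ (just as $p_1$ is the centroid of $p_2p_3p_4$), since a lattice triangle whose only lattice points are its vertices and one interior point has that point at its centroid. Hence $3p_1-p_4=p_2+p_3=3p_5-p_6$, so the lines $p_1p_4$ and $p_5p_6$ meet at the midpoint of $p_2p_3$, which lies outside both segments (beyond $p_1$, resp.\ beyond $p_5$). Therefore $p_1,p_4,p_5,p_6$ are coplanar, in convex position, and do not form a parallelogram (two of the sides lie on intersecting lines); but a convex lattice quadrilateral containing no lattice points besides its four vertices must be a parallelogram, so $\conv(A)$ acquires extra lattice points, contradicting size $6$. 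Thus the triangles not containing $p_1$ simply cannot occur, which is the actual content behind ``we are in case B'' there; without an argument of this kind your proof is incomplete.
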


\begin{proof}
If the triangle includes $p_1$ then we are in case B. So, we assume without loss of generality that
$p_5 \in p_2p_3p_6$, an exterior facet of $P$. Then, since $p_1$ and $p_5$ are the centroids of the facets $p_2p_3p_4$ and $p_2p_3p_6$ respectively, the points $p_1$, $p_4$, $p_5$ and $p_6$ form a convex quadrilateral that is not a parallelogram (the lines $p_1p_4$ and $p_5p_6$ intersect in the mid-point of the edge $p_2p_3$). Such a lattice quadrilateral must contain extra lattice points.
\end{proof}

Thus, we only need to study the cases where $p_5$ is along an edge or in the interior of a tetrahedron.
In what follows we take without loss of generality $p_5=(0,0,1)$ or $p_5=(1,2,3)$ and,
because of the rotation symmetries of the $(3,1)$-circuit, we assume that $p_5$ is in the tetrahedron  $T_{1236}$.

If $p_5$ lies along a segment $p_ip_6$, then without loss of generality, $p_5 \in p_2p_6$ or $p_5 \in p_1p_6$ (that is, $p_6=2p_5-p_2$ or $p_6=2p_5-p_1$). Together with the two possibilities for $p_5$ this gives four possible configurations, that we analyze one by one:

\begin{itemize}
\item $p_5=(0,0,1)$ and $p_6=2p_5-p_2=(0,0,2)-(1,0,0)=(-1,0,2)$. In this case, a triangulation is $P=P^6 \cup T_{3456}$. $P^6$ is a polytope of size $5$ and $T_{3456}$ is an empty tetrahedron for this values of $p_5$ and $p_6$, so $P$ has size $6$. 
This gives configuration C.1 in Table~\ref{table:6points-I}.
%

\item $p_5=(1,2,3)$ and $p_6=2p_5-p_2=(2,4,6)-(1,0,0)=(1,4,6)$. The triangulation is the same as before: $P=P^6 \cup T_{3456}$, but in this case $T_{3456}$ is not empty.

\item $p_5=(0,0,1)$ and $p_6=2p_5-p_1=(0,0,2)$. In this case, a triangulation is $P=P^6 \cup T_{2356}\cup T_{2456}\cup T_{3456}$. $P^6$ is a polytope of size $5$ and all of $T_{2356}$, $T_{2456}$, $T_{3456}$ are empty tetrahedra for this values of $p_5$ and $p_6$, so $P$ has size $6$.
This gives configuration C.2 in Table~\ref{table:6points-I}.
%

\item $p_5=(1,2,3)$ and $p_6=2p_5-p_1=(2,4,6)$. The triangulation is the same as before: $P=P^6 \cup T_{2356}\cup T_{2456}\cup T_{3456}$, and again all three tetrahedra are empty. 
This gives configuration C.3 in Table~\ref{table:6points-I}.
%
\end{itemize}

If $p_5$ lies in the interior of the tetrahedron $T_{1236}$, observe that $p_5$ cannot be coplanar with $p_1p_4p_6$ because then these points would form a convex quadrilateral with the edges $p_1p_4$ and $p_5p_6$ not parallel, which implies extra lattice points. We assume without loss of generality that $p_5$ is on the same side of this plane as $p_2$. This fixes the oriented matroid of $P$ to be $5.4$*, with the following labeling (on the right we list its circuits):

\begin{center}
\includegraphics[width=0.4\textwidth]{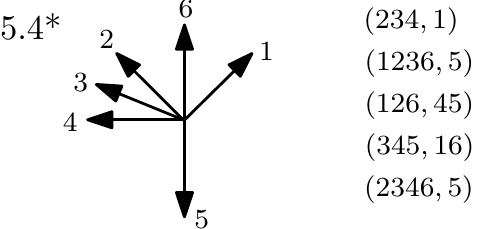}
\end{center}

Since $P^4$ has to be one of the eight configurations of size $5$ and signature $(4,1)$ displayed in Table~\ref{table:5points} and since $p_4 = 3p_1 -p_2 -p_3$, we use the \emph{$(4,1)$-extension} method computationally. The number of possibilities is a priori $8\times 4! = 196$, but most of them can easily be discarded because they do not produce the correct oriented matroid or because the subtetrahedron $p_1p_2p_3p_5$ does not have volume $3$ or $1$.
Among those who satisfy these conditions, the absence of extra lattice points is equivalent to emptiness of the tetrahedra $T_{1246}$ and $T_{1346}$ since these two tetrahedra, together with $P^4$, triangulate the configuration.

There are $2$ equivalence classes that pass this test, labeled C.4 and C.5 in Table~\ref{table:6points-I}.

\medskip

\subsubsection{Both $p_5$ and $p_6$ are vertices}
\label{sec:C.ii}

Then both $p_5$ and $p_6$ must be at lattice distance $1$ or $3$ from the coplanarity. If both are at distance $1$ then $P$ has width one. If both are at distance $3$ then the volume vector of $P$ must be divisible by three, which contradicts the fact that $P^2$, $P^3$ and $P^4$ are polytopes of size five and signature $(3,2)$ (such polytopes must have unimodular subtetrahedra, by the classification in Table~\ref{table:5points}).

Hence, we assume $p_5$ to be at distance $1$ and $p_6$ at distance $3$ from the coplanarity. Without loss of generality we take $p_6=(1,2,3)$ and $p_5=(a,b,1)$ for some $a,b \in \Z$.

The configuration is contained in the hyperplanes $z=0,1,3$, so we first use the \emph{parallel-planes} method to check which coordinates for $p_5$ do not produce more lattice points in the plane $z=1$. The intersection points of edges $p_1p_6$, $p_2p_6$, $p_3p_6$ and $p_4p_6$ with $z=1$ are, respectively, $(1/3,2/3,1)$, $(1,2/3,1)$, $(1/3,4/3,1)$ and $(-1/3,0,1)$. We want to check which values of $a,b$ leave $p_5$ outside the convex hull of those four intersection points, and do not produce more lattice points in $P$. Without loss of generality, because of the rotation symmetries of $P^5$, we can assume that $p_5$ lies in $x \ge 1/3$, $y \ge 2/3$ (non-shaded area in Figure~\ref{fig:CaseM}).

\begin{figure}[htb]
\begin{center}
\includegraphics[scale=1.10]{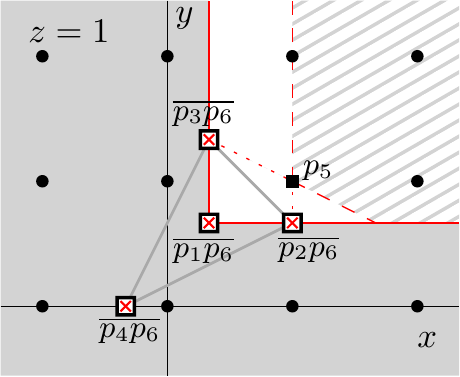}
\caption{The case of Section~\ref{sec:C.ii}. The red crossed squares are the intersection of edges $p_1p_6$, $p_2p_6$, $p_3p_6$ and $p_4p_6$ with the displayed plane $z=1$. Black dots are the lattice points in the plane and black squares represent the possible lattice points for $p_5$.}
\label{fig:CaseM}
\end{center}
\end{figure}

Suppose $a>1$ or $b>1$. Then the convex hull of $P$ at $z=1$ will enclose $(1,1,1)$ as a seventh lattice point. Hence the only possibility is that $a=1=b$.

It remains to check whether any lattice points at $z=2$ lie inside $P$. A triangulation of $P$ is $P= P^5 \cup T_{2356}$. We already know $P^5$ to be a polytope of size $5$, and $T_{2356}$ is indeed empty. Hence we get a configuration of size $6$, which is labelled C.6 in Table~\ref{table:6points-I}.

\bigskip

We now analyze the width of the six configurations obtained in case C. The only functional that can possibly produce width one is the functional $z$: every other functional is non-constant in the plane $z=0$ containing the $(3,1)$ coplanarity and the subconfiguration in this plane has an interior point.
Hence, all the configurations have width at least two. In all of them we give functionals which achieve this width except in configuration C.3, where we give one producing width 3. Let us show that this configuration does not have width two:

\begin{lemma}
The  configuration $A$ consisting of the columns of the following matrix has width at least three:
\[
\left( \begin{array}{cccccc}
  0 & 1 & 0 &-1 & 1 & 2\\ 
  0 & 0 & 1 &-1 & 2 & 4\\ 
  0 & 0 & 0 & 0 & 3 & 6\end{array} \right)\]
\end{lemma}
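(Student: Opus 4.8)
The plan is to take an arbitrary non-constant integer linear functional $f(x,y,z)=\alpha x+\beta y+\gamma z$, $(\alpha,\beta,\gamma)\in\Z^3\setminus\{0\}$, and show that $\max_A f-\min_A f\ge 3$. The observation that makes everything work is that the last point of $A$ is twice the fifth one: $p_6=(2,4,6)=2\,(1,2,3)=2p_5$. Hence, writing $s:=f(p_5)=\alpha+2\beta+3\gamma$, the six values of $f$ on $A$ are
\[
f(p_1)=0,\quad f(p_2)=\alpha,\quad f(p_3)=\beta,\quad f(p_4)=-\alpha-\beta,\quad f(p_5)=s,\quad f(p_6)=2s.
\]
In particular $\{0,s,2s\}\subseteq f(A)$, and these three integers already span an interval of length $2|s|$, so if $f$ realizes width at most $2$ then necessarily $s\in\{-1,0,1\}$. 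I would then dispatch these three possibilities one by one.

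For $s=1$ (the case $s=-1$ being symmetric under $f\mapsto -f$): if $\max_A f-\min_A f\le 2$, then, since $0$ and $2s=2$ are among the values, all six values lie in $\{0,1,2\}$; in particular $\alpha$, $\beta$ and $-\alpha-\beta$ all lie in $\{0,1,2\}$, and since they sum to $0$ this forces $\alpha=\beta=0$. But then $s=3\gamma$, contradicting $s=1$. So no functional with $s=\pm1$ gives width $\le2$.

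The case $s=0$ needs a small computation. Now the six values are $\{0,\alpha,\beta,-\alpha-\beta,0,0\}$, so $\max_A f-\min_A f$ equals the width, with respect to the planar functional $(\alpha,\beta)$, of the triangle $T:=\conv\{p_2,p_3,p_4\}$ of which $p_1$ is the centroid. This $T$ is the dilated unimodular triangle, with difference body $T-T$ the hexagon with vertices $\pm(1,-1)$, $\pm(2,1)$, $\pm(1,2)$; hence the width of $T$ in direction $(\alpha,\beta)$ equals $\max\bigl(|\alpha-\beta|,\,|2\alpha+\beta|,\,|\alpha+2\beta|\bigr)$, and an elementary check on the signs of $\alpha$ and $\beta$ shows that this is $\le 2$ only for $(\alpha,\beta)\in\{(0,0),\pm(1,0),\pm(0,1),\pm(1,-1)\}$. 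But $s=0$ means $\alpha+2\beta\equiv 0\pmod 3$, which fails for $(1,0)$, $(0,1)$ and $(1,-1)$ (values $1$, $2$, $2$ mod $3$); and $(\alpha,\beta)=(0,0)$ makes $f$ a nonzero multiple of $z$, of width $6|\gamma|\ge 6$. So $s=0$ is impossible too, and combining the three cases gives width $\ge3$.

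I do not anticipate a genuine obstacle: the identity $p_6=2p_5$ does almost all the work, and the only step requiring care is the enumeration of directions in which $T$ has width $2$, which is a bounded case distinction once $T-T$ is written down.
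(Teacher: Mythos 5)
Your proof is correct and takes essentially the same route as the paper's: both exploit the collinearity of $p_1,p_5,p_6$ (your reduction to $|s|\le 1$ is the paper's dichotomy that a width-two functional must be constant on the segment or on the triangle), reduce the $s=0$ case to the planar width of $\conv\{p_2,p_3,p_4\}$, and eliminate the surviving directions $\pm x$, $\pm y$, $\pm(x-y)$ by the same mod-$3$ obstruction that the paper phrases as the extensions $x-z/3$, $y-2z/3$, $x-y+z/3$ failing to be integer. (Your difference-body enumeration and the centroid argument in the $s=\pm1$ case are just more explicit renderings of the paper's steps; the aside calling $T$ a ``dilated unimodular triangle'' is inaccurate, since its normalized area is $3$, but it is never used.)
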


\begin{proof}
We label the six points $p_1$, \dots, $p_6$ in the given order. Observe that $p_5$ is the mid-point of segment $p_1p_6$
and $p_1$ is the centroid of triangle $p_2p_3p_4$. Hence, a functional giving width two to $A$ must be constant either on the triangle or the segment. 

The only (modulo sign)  linear, primitive, functional constant on the triangle is $f(x,y,z)=z$, which has width six on $A$. Hence, for the rest of the proof we assume $f$ to be constant (and zero) on the segment $p_1p_6$. Let $f_0(x,y):=f(x,y,0)$ be $f$ restricted to the plane $\{z=0\}$. $f_0$ must be one of the three functionals giving width two to the $(3,1)$ configuration on that plane, namely $f_0(x,y)=x$, $f_0(x,y)=y$, or $f_0(x,y)=x-y$. The extensions of these functionals that are constant on the segment are $f(x,y)=x-z/3$, $f(x,y)=y-2z/3$, or $f(x,y)=x-y+z/3$ which are not integer. Hence, there exists no integer functional giving width two to $A$.
\end{proof}

Summing up cases B and C:

\begin{theorem}
\label{thm:6points-2}
Among the lattice $3$-polytopes of size six with no $5$ coplanar points but with some $(3,1)$ coplanarity, there are exactly $20$ equivalence classes of width two, $1$ of width three, and none of width larger than three, as shown in Tables~\ref{table:6points-I}. $13$ of those are dps, all of them of width two.
\end{theorem}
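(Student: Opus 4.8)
This statement summarizes the case analysis carried out in Sections~\ref{sec:B.i}--\ref{sec:C.ii}, so the plan is to gather those outcomes and confirm that the counting, the inequivalence, the widths and the dps tally all come out as claimed. First I would assemble the list of configurations produced: Section~\ref{sec:B.i} (both non-planar points at lattice distance one from the $(3,1)$-plane) yields the ten classes $B.1,\dots,B.10$; Section~\ref{sec:B.ii} (distances one and three) yields $B.11,B.12,B.13$; Section~\ref{sec:B.iii} (both at distance three) yields $B.14,B.15$; Section~\ref{sec:C.i} ($p_5$ not a vertex of $P$) yields $C.1,\dots,C.5$; and Section~\ref{sec:C.ii} (both $p_5,p_6$ vertices) yields $C.6$. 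That is $15+6=21$ configurations. Each of them has size six, contains a $(3,1)$-coplanarity and has no five coplanar points; conversely, after fixing the standard $(3,1)$-circuit and applying Lemma~\ref{lemma:(3,1)thm}, every size-six polytope with those properties falls into exactly one of the five subcases, Case~C being deliberately set up so that \emph{all} of its $(3,1)$-coplanarities leave the two remaining points on one side of the circuit, which makes Cases B and~C disjoint.

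Next I would show that the $21$ configurations are pairwise $\Z$-inequivalent. The volume vector is a $\Z$-equivalence invariant up to the action of the symmetry group of the oriented matroid on the point labels, and the volume vectors recorded in Table~\ref{table:6points-I} are pairwise distinct modulo that action; together with Theorem~\ref{thm:VolumeVectors}(1) this forces the $21$ classes to be distinct. That each candidate really has size six was checked inside the subcases, either by triangulating $P$ into empty tetrahedra and applying the criterion of Section~\ref{subsec:4points}, or, in the $(4,1)$-extension cases of Section~\ref{sec:C.i}, by the computational search; for the handful of configurations whose volume vector has $\gcd>1$, the explicit coordinates of the representative in Section~\ref{sec:tables} confirm this directly.

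For the widths, I would argue as in the discussion just before the statement. Any integer functional non-constant on the plane of the $(3,1)$-circuit restricts there to a primitive planar functional, and such a functional already spreads the four values on $\{o,e_1,e_2,-e_1-e_2\}$ by at least two (by at least three if its two coordinates have the same sign); moreover the functional $z$ has width at least two on every one of the $21$ configurations, because in Cases B and~C the two non-planar points sit at heights of absolute value $1$ or $3$, either on opposite sides of the circuit or both on one side. Hence all $21$ have width $\ge 2$; the functionals listed in Table~\ref{table:6points-I} realize width exactly two in every case except $C.3$, for which the preceding lemma gives width $\ge 3$ and the table exhibits a functional of width $3$. So there are $20$ classes of width two, one of width three, and none of larger width.

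Finally, dps-ness depends only on the oriented matroid (it is the absence of a $(2,1)$ and of a $(2,2)$ circuit), so the dps classes are exactly those whose oriented matroid is marked with an asterisk in Section~\ref{subsec:om}; reading this off Table~\ref{table:6points-I} gives $13$ of the $21$, all of width two (in particular $C.3$, which contains the collinear triple $p_1,p_5,p_6$, is non-dps). The one place where genuine care is needed in this wrap-up is the inequivalence step of the second paragraph: one must be sure the tabulated volume vectors really are pairwise distinct modulo oriented-matroid symmetry; the exhaustiveness of the five subcases, which is where the actual content lies, was already established in Sections~\ref{sec:B.i}--\ref{sec:C.ii}.
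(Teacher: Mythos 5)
Your proposal is correct and takes essentially the same route as the paper: the theorem is precisely the summary of the case analysis of Sections~\ref{sec:B.i}--\ref{sec:C.ii}, with the width bounds obtained from the interior point of the planar $(3,1)$-configuration plus the dedicated lemma for C.3, and the dps count read off the starred oriented matroids. The only slip is your claim that the two off-plane points always sit at heights of absolute value $1$ or $3$ (C.1--C.5 have points at heights $2$, $5$, $6$ or $7$), but since the $z$-width of each of the $21$ explicit representatives is visibly at least two, the conclusion is unaffected.
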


\bigskip


\section{Cases D and E: Polytopes containing a $(2,2)$-circuit (but no $(3,1)$-circuit, and no five coplanar points)}
\label{sec:(2,2)coplanarity}

These are clearly non-dps configurations. Without loss of generality, we assume they contain the standard  $(2,2)$-circuit: $p_1=o$, $p_2=e_1$, $p_3=e_2$ and $p_4=e_1+e_2$.
Again, we treat separately the possibilities for the side of the coplanarity the other two points lie in.

\subsection{Case D: Polytopes with a $(2,2)$-circuit and the other two points on opposite sides} 
\label{sec:(2,2)coplanarity1}

Then $P^5$ and $P^6$ must be polytopes of size $5$ and signature $(2,2)$. By Table~\ref{table:5points}, $p_5$ and $p_6$ must be at lattice distance one from the $(2,2)$-circuit. Without loss of generality we take $p_5=(0,0,1)$, and $p_6=(a,b,-1)$ for $a,b \in \Z$. 

The configuration is contained in the three planes $z=-1,0,1$, so we use the parallel-planes method to check if the size of $P$ is $6$. There is one single point in the planes $z=\pm 1$, and $P\cap \{z=0\}$ is the convex hull of the $(2,2)$-circuit and the intersection point $(a/2,b/2,0)$ of the edge $p_5p_6$ with the plane $z=0$.

Without loss of generality (because of the symmetries present in $P^6$) the intersection point can be assumed to be in the region $1/2 \le x \le y$. In order for $(1,2,0)$ and $(2,2,0)$ not to be in $P\cap \{z=0\}$, either $1/2 \le x<1$, or $x \le y<x+1$ (non-shaded area in Figure~\ref{fig:CaseN}). This gives an infinite number possibilities for the pair $(a,b)$, but those with $a=1$ or $a=2$ can be discarded since they have width one (with respect to the functional $x+z$, for example). Only the three possibilities in the table of  Figure~\ref{fig:CaseN} remain. They are separated according to oriented matroid.

\begin{figure}[htb]
\begin{center}
\raisebox{-0.5\height}{\includegraphics{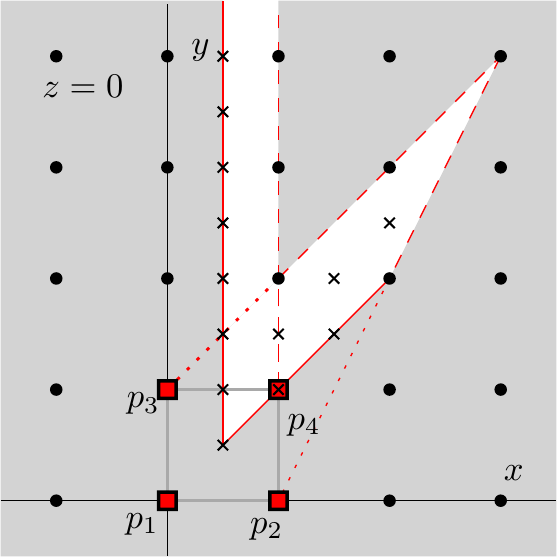}}
\qquad \qquad
\begin{tabular}{|c|c|}
  \hline
   $a$& $b$\\
\hline
  \ $3$ \ & $3$ \\
\hline
  \ $3$ \ & $4$ \\
  \ $4$ \ & $5$ \\
\hline
\end{tabular}
\caption{The analysis of case D. Red squares represent the points $p_1$, $p_2$, $p_3$ and $p_4$ of $P$ in the displayed plane $z=0$. Black dots are the lattice points in the plane and black crosses represent the possible intersection points of the edge $p_5p_6$ and the plane $z=0$.}
\label{fig:CaseN}
\end{center}
\end{figure}

These three options have automatically size $6$ since no more points arise at $P\cap \{z=0\}$. But the configuration with $(a,b)=(3,3)$ contains a $(3,1)$ circuit ($p_4$ is the barycenter of $p_1$, $p_5$ and $p_6$) so it has already been considered. 

The two remaining configurations, with $(a,b)=(3,4)$ and $(a,b)=(4,5)$, have the same oriented matroid but different volume vectors. These are configurations D.1 and D.2 in Table~\ref{table:6points-I}, and they clearly have width two.
\medskip

\subsection{Case E: Polytopes with a $(2,2)$-circuit and the other two points on the same side} 
\label{sec:(2,2)+(4,1)}

Contrary to what happened in the case of a $(3,1)$-circuit it is now impossible for $p_5$ and $p_6$ to be both vertices, because then they would both be at distance one from the $(2,2)$ coplanarity and the configuration would have width one. So, we assume without loss of generality that $p_6$ is a vertex and $p_5$ is not, and take $p_5=(0,0,1)$.

Because of the symmetries of the $(2,2)$-circuit, we can assume that $p_6$ is so that $p_5 \in T_{1236}$. Now, $p_5$ can neither be along an edge of $T_{1236}$ (because then $P$ has width one) nor in the interior of a facet (because then there would be a $(3,1)$-circuit, and this case has been already considered). It must thus be in the interior of $T_{1236}$. By symmetry, $p_5$ cannot be in the plane $p_1p_4p_6$ either, and we can assume it to be on the same side of this plane as $p_2$.

The oriented matroid of this configuration is then  $5.5$, with its elements labeled as follows (on the right is the list of circuits):

\begin{center}
\includegraphics[width=0.4\textwidth]{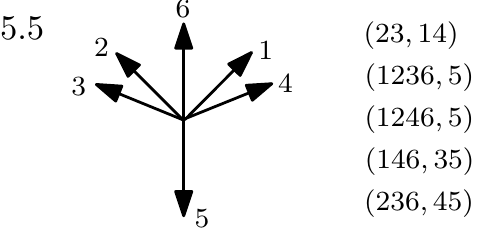}
\end{center}

This oriented matroid has one symmetry, the permutation $(12)(34)$. When comparing two of these configurations, we need to compare them as well after performing this permutation on one of them.

Then $P^4$ is one of the eight possible polytopes of size $5$ and signature $(4,1)$, and $p_4=p_2+p_3-p_1$, so we use the same technique as in the case $(3,1)$. We look at all the a-priori $8\times 4!$ ways of mapping $P^4$ to a configuration of size $5$ and signature $(4,1)$, and discard those for which the addition of $p_4 := p_2+p_3-p_1$ does not give a configuration of size six (equivalently, those for which the tetrahedron $p_2p_3p_4p_6$ turns out to be non-empty).
This results in $2$ equivalence classes, labeled E.1 and E.2 in Table~\ref{table:6points-I}. Since they have an interior point their width is at least two, and the table shows functionals that achieve this width.

\bigskip

In summary:

\begin{theorem}
\label{thm:6points-3}
Among the lattice $3$-polytopes of size six with no $5$ coplanar points, no $(3,1)$ coplanarity, but with some $(2,2)$ coplanarity, there are exactly $4$ equivalence classes of width two, and none of larger width, as shown in Table~\ref{table:6points-I}.
All of them are non-dps.
\end{theorem}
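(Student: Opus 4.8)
This statement is the summary of the two preceding subsections, so the plan is simply to assemble Cases D and E. Normalize the $(2,2)$-circuit to the standard one $\{p_1,p_2,p_3,p_4\}=\{o,e_1,e_2,e_1+e_2\}$ lying in the plane $z=0$; this is possible by $\Z$-equivalence. The non-dps claim is immediate: containing a $(2,2)$-circuit is an obstruction to the dps property, which (as recalled in Section~\ref{subsec:om}) is the oriented-matroid condition ``no $(2,2)$- and no $(2,1)$-circuit''. It then remains to enumerate the configurations, splitting according to whether the two remaining points $p_5,p_6$ lie on opposite sides (Case D) or on the same side (Case E) of the plane $z=0$.

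For Case D, removing either $p_5$ or $p_6$ leaves a lattice polytope of size five still containing the $(2,2)$-circuit, hence of signature $(2,2)$; by the classification in Table~\ref{table:5points} that point is at lattice distance one from $z=0$. Thus $A\subset\{z=-1,0,1\}$ with a single point on each of $z=\pm1$, and since there is a single plane between the two extreme points, $\conv(A)$ has size six if and only if the plane $z=0$ contains no lattice point beyond the parallelogram and the midpoint $(a/2,b/2,0)$ of the edge $p_5p_6$ — a two-dimensional (parallel-planes) computation. Using the symmetries of the parallelogram to place this midpoint in the cone $1/2\le x\le y$, the no-extra-point condition cuts out an explicit region containing infinitely many integer pairs $(a,b)$; those with $a\in\{1,2\}$ are discarded because they have width one (e.g.\ for the functional $x+z$), and $(a,b)=(3,3)$ is discarded because there $p_4$ is the barycentre of $p_1,p_5,p_6$, so the configuration carries a $(3,1)$-circuit and belongs to the previous section. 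What survives is exactly $(a,b)\in\{(3,4),(4,5)\}$; these have distinct volume vectors, hence are inequivalent, and are the classes D.1 and D.2.

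For Case E, $p_5$ and $p_6$ cannot both be vertices (both would sit at distance one from the coplanarity, forcing width one), so one may assume $p_6$ is a vertex, $p_5$ is not, and $p_5=(0,0,1)$ lies in the tetrahedron $T_{1236}$. Then $p_5$ cannot lie on an edge of $T_{1236}$ (width one) nor in a facet (this would create a $(3,1)$-circuit, already handled), so $p_5$ is interior to $T_{1236}$, and by the residual symmetry it may be taken on the $p_2$-side of the plane $p_1p_4p_6$; this pins the oriented matroid to $5.5$, whose only nontrivial symmetry is $(12)(34)$. Since $p_4=p_2+p_3-p_1$ and $P^4$ is a size-five polytope of signature $(4,1)$, apply the $(4,1)$-extension method: run over the $8\times4!$ ways of mapping $P^4$ onto one of the eight signature-$(4,1)$ configurations of Table~\ref{table:5points}, reconstruct $p_4$, and keep those for which $\conv(A)$ has size six (equivalently, for which the complementary tetrahedron $p_2p_3p_4p_6$ is empty), identifying outputs that differ by the symmetry $(12)(34)$. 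This yields exactly two classes, E.1 and E.2.

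Finally, all four configurations have an interior lattice point (visible from their oriented matroids), and a lattice $3$-polytope of width one has all its lattice points on its two bounding facets, hence no interior lattice point; so each of D.1, D.2, E.1, E.2 has width at least two, and the functionals recorded in Table~\ref{table:6points-I} attain width two, giving width exactly two in every case and nothing of larger width. Counting $2+2=4$ classes, all non-dps, completes the proof. The point requiring the most care is the bookkeeping in Case D — verifying that the infinite tail of integer pairs $(a,b)$ left by the parallel-planes region really does consist entirely of width-one configurations, so that only the two listed classes survive — together with confirming that the computer enumeration in Case E is exhaustive and that the symmetry identification neither over- nor under-counts equivalence classes.
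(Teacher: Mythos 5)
Your proposal is correct and follows essentially the same route as the paper's own proof: the same split into Case D (opposite sides, parallel-planes method, discarding $a\in\{1,2\}$ as width one and $(3,3)$ as carrying a hidden $(3,1)$-circuit, leaving D.1 and D.2) and Case E (forcing $p_5$ interior to $T_{1236}$, oriented matroid $5.5$ with symmetry $(12)(34)$, and the computational $(4,1)$-extension giving E.1 and E.2), with the same dps and width conclusions. The only cosmetic difference is that you justify width $\ge 2$ uniformly via the interior lattice point, which the paper states directly for D.1 and D.2 and uses explicitly only for E.1 and E.2; this is fine.
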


\bigskip


\section{Case F: Polytopes containing a $(2,1)$-circuit (but no other coplanarity)}
\label{sec:(2,1)coplanarity}

Suppose $A=\{r_1,r_2,r_3,u_1,u_2,u_3\}$ is a set of six lattice points, $r_2$ being the mid-point of $r_1$ and $r_3$, and such that $P=\conv A$ has size six and width $>1$. At least one of $r_1$ and $r_3$ must be a vertex of $P$ (since $r_2$ is not and $P$ has at least four vertices). Assume $r_3$ is a vertex, so that $P^3:=\conv(A\setminus \{r_3\})$ has to be a polytope of size five. Then:

\begin{lemma}
\label{lemma:2-1-weak}
$P^3$ has signature $(4,1)$ or $(3,2)$.
\end{lemma}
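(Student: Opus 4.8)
The plan is to rule out the other three signatures for $P^3$, namely $(2,1)$, $(2,2)$ and $(3,1)$, using the hypotheses that $P$ has width $>1$ and that the $(2,1)$-collinearity $\{r_1,r_2,r_3\}$ is the \emph{only} coplanarity of $A$. Throughout, recall that $P^3$ is a lattice polytope of size five, so its unique circuit is among the five possibilities, and that $A\setminus\{r_3\}=\{r_1,r_2,u_1,u_2,u_3\}$ contains the collinear triple $\{r_1,r_2\}$ only after $r_3$ is removed — wait: $r_2$ is the midpoint of $r_1r_3$, so $\{r_1,r_2\}$ alone is not special, but any affine dependence of $A\setminus\{r_3\}$ is a coplanarity of $A$ (five of the six points), hence by hypothesis the only affine dependences available inside $A\setminus\{r_3\}$ must be ``inherited'' from the unique $(2,1)$-collinearity of $A$ — but that collinearity involves $r_3$. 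This is the crux: any circuit of $P^3$ is a circuit of $A$ not involving $r_3$, and the only collinearity of $A$ does involve $r_3$.

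First I would dispose of signature $(2,1)$: a $(2,1)$-signature for $P^3$ means three of its points are collinear, giving a second collinearity of $A$ disjoint in spirit from $\{r_1,r_2,r_3\}$ (it does not use $r_3$); two collinearities in $A$ would, as already argued in the paper's overview (the discussion preceding Lemma~\ref{lemma:2-1-strong}), force either width one or five coplanar points, contradicting our standing assumptions. Next, signature $(2,2)$: the unique circuit of $P^3$ would be a $(2,2)$-coplanarity of four of the points of $A\setminus\{r_3\}$, hence a $(2,2)$-coplanarity of $A$ — but $A$ has no coplanarity other than the $(2,1)$ one, contradiction. The same one-line argument kills signature $(3,1)$ as well: a $(3,1)$-circuit of $P^3$ is a $(3,1)$-coplanarity of $A$ not using $r_3$, again contradicting uniqueness of the $(2,1)$ collinearity. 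Therefore only $(4,1)$ and $(3,2)$ survive.

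The step I expect to require the most care is the $(2,1)$ case, because there the ``extra coplanarity'' is again a collinearity, and one must invoke the structural fact that two distinct $(2,1)$-collinearities in a size-six width-$>1$ configuration with no five coplanar points is impossible; the cleanest route is to cite the reasoning the authors sketch just before Lemma~\ref{lemma:2-1-strong}, or to reprove it directly: if $A$ had collinear triples $\{r_1,r_2,r_3\}$ and $\{s_1,s_2,s_3\}$, then either the two lines are coplanar — giving either five coplanar points (if at least five of the six lie in the common plane, which they do since two lines span a plane) contradicting Case F's hypothesis — or they are skew, in which case each of the two non-midpoint-containing size-five subsets would still contain a collinearity, one checks via the size-five classification (Table~\ref{table:5points}) that a size-five lattice polytope with a collinear triple has signature $(2,1)$ or $(2,2)$ and is forced to have width one, propagating width one to $P$. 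Once that sub-lemma is in hand, the remaining three exclusions are immediate from the uniqueness hypothesis, and the proof is complete.
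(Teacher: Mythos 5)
Your proposal follows essentially the same route as the paper: signatures $(3,1)$ and $(2,2)$ are excluded because the unique circuit of $P^3$ would be a $(3,1)$ or $(2,2)$ coplanarity of $A$, which is ruled out by the standing Case~F hypothesis (those configurations were handled in Cases B--E), and signature $(2,1)$ is excluded by the dichotomy ``the two lines are coplanar, giving five coplanar points'' versus ``the six points are $3+3$ on two skew lines, forcing width one''. Two cautions. First, do not ``cite the reasoning just before Lemma~\ref{lemma:2-1-strong}'': the uniqueness of the $(2,1)$ collinearity announced in the overview is exactly what the present lemma (together with the exclusion of Case A) establishes, so citing it would be circular; your direct reproof is the right move, and note that when the second collinear triple shares a point with $\{r_1,r_2\}$ the two lines meet and you are again in the five-coplanar-points case, so only the disjoint-triples skew case remains. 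Second, the one step you state without justification --- ``propagating width one to $P$'' --- is precisely where the content lies: width one of a five-point subset does not by itself bound the width of $P$, since the sixth point could increase it. What makes it work is the structure visible in the representative of signature $(2,1)$ in Table~\ref{table:5points}: the width-one functional is constant on the line spanned by the collinear triple (here $u_1u_2u_3$) and on the other two points (here $r_1,r_2$); since $r_3$ lies on the line through $r_1$ and $r_2$, the functional takes the same value at $r_3$, so $P$ itself has width one, the desired contradiction. (This is what the paper expresses by saying $P^3$ has width one ``precisely with respect to $l_1$ and $l_2$''.) Also, a five-point spanning configuration containing a collinear triple has signature exactly $(2,1)$, not ``$(2,1)$ or $(2,2)$'', since the triple is its unique circuit; this slip is harmless but worth fixing.
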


\begin{proof}
If $P^3$ has signature $(3,1)$ or $(2,2)$ we are in one of the cases B, C, D or E already considered. If $P^3$ has signature $(2,1)$ then either $P$ 
contains five coplanar points (case A, already considered) or it consists of $3+3$ points along two lines $l_1$ and $l_2$. The latter implies, by the classification on Table~\ref{table:5points}, that $P^3$ has width one precisely with respect to $l_1$ and $l_2$. But, in this case, $P$ has width one as well.
\end{proof}

\begin{lemma}
\label{lemma:2-1-strong}
If $P^3$ has signature $(3,2)$, then $r_1$ is a vertex and $P^1:=\conv(A\setminus \{r_1\})$ has signature $(4,1)$.
\end{lemma}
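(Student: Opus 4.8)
The plan is to use the classification of size-five polytopes (Table~\ref{table:5points}) to fix the shape of $P^3$ and then follow the collinear triple $r_1,r_2,r_3$ through it.

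A $(3,2)$-polytope has width one, with its five points forming a unimodular triangle $T$ and a primitive segment $\sigma$ in two consecutive lattice planes, all five being vertices. So I would fix coordinates with $T\subset\{z=0\}$ and $\sigma\subset\{z=1\}$ and record where $r_1,r_2$ sit. Since $r_3=2r_2-r_1$ we have $z(r_3)=2z(r_2)-z(r_1)$; if $z(r_1)=z(r_2)$ then $r_3$ lies in the same plane, so all six points of $P$ lie in $\{z=0\}\cup\{z=1\}$ and $P$ has width one, against the hypothesis. Hence $\{z(r_1),z(r_2)\}=\{0,1\}$: one of $r_1,r_2$ is a vertex of $T$ and the other an endpoint of $\sigma$, and $z(r_3)\in\{2,-1\}$ accordingly. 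Then the plane of $P^3$ containing $r_1$ meets $P$ in exactly $T$ (resp.\ $\sigma$) — it supports $P$, and the only lattice points of $P$ lying in it are the three (resp.\ two) forming that unimodular simplex — so it is a face of $P$, and $r_1$, being a vertex of $T$ or an endpoint of $\sigma$, is a vertex of $P$. This already gives the first assertion, and since now $P^1:=\conv(A\setminus\{r_1\})$ has size five, the argument of Lemma~\ref{lemma:2-1-weak} applied with $r_1$ in place of $r_3$ shows that $P^1$ has signature $(4,1)$ or $(3,2)$.

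It remains to rule out $P^1$ being $(3,2)$. With explicit coordinates $T=\{(0,0,0),(1,0,0),(0,1,0)\}$ and $\sigma=\{(0,0,1),(p,q,1)\}$, $\gcd(p,q)=1$ (taking $p\ge1$ after a possible relabelling of $\sigma$; $p=0$ would put a $(2,2)$-circuit inside $P^3$), the unique affine dependence of the five points of $P^1$ can be written down directly from $r_3=2r_2-r_1$, and its sign pattern shows that $P^1$ is $(4,1)$ except for a few ``bad'' sign/placement conditions on the triple. In the representative bad case — $r_1$ a vertex of $T$, $r_2$ an endpoint of $\sigma$, so $r_3=(2p,2q-1,2)$ is the only point of $P$ at lattice height $2$ above $T$ — a short computation identifies $P\cap\{z=1\}$ with the convex hull of $\sigma$ and the halved copy $\tfrac12(T+r_3)$ of the pyramid over $T$; since $P$ has size six this slice must contain no lattice point besides its two lattice vertices $(0,0)$ and $(p,q)$. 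An elementary two-dimensional lattice-point count — the candidate extra points lie on the lines $qx-py=1,2,\dots$, and a congruence argument produces such a point unless a restrictive divisibility condition holds — then forces a seventh lattice point of $P$ outside a handful of borderline situations, and in each of those one exhibits an integral functional (constant on the line $r_1r_2r_3$ and separating it from $u_1,u_2,u_3$) showing that $P$ has width one. The remaining bad positions of the triple are handled the same way. Both outcomes contradict the hypotheses, so $P^1$ is $(4,1)$.

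The real content is this last step. The first assertion is essentially combinatorial, but the ``bad'' placement genuinely occurs for honest size-five $(3,2)$-polytopes $P^3$ — with $P^1$ sometimes $(3,2)$ — and what rescues the lemma is the lattice-point count in the intermediate slice together with the identification of the surviving polytopes as width-one $3{+}3$ configurations (oriented matroid $4.15$). I expect the main difficulty to be organising the coordinate normalization (which $u_i$ accompanies $T$ and which $\sigma$, and the sign conditions inherited from $P^3$ being a genuine size-five polytope) and handling the resulting sub-cases uniformly.
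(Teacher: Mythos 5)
Your first paragraph is fine, and in fact it proves the assertion ``$r_1$ is a vertex'' by a route different from the paper's (the paper distinguishes whether $r_1r_2$ is the unique non-edge of $P^3$ or an edge, in which case $r_1r_2r_3$ is an edge of $P$; you instead use the $T\cup\sigma$ width-one structure of a $(3,2)$ configuration and the supporting plane $\{z=0\}$ or $\{z=1\}$ through $r_1$). The reduction to ``one of $r_1,r_2$ lies in $T$ and the other in $\sigma$'' and the appeal to Lemma~\ref{lemma:2-1-weak} for $P^1$ are also correct.

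The problem is the second half, which you yourself identify as ``the real content'': it is a plan, not a proof. You never enumerate the bad placements (note there are bad cases on both sides, e.g.\ $r_1\in T$, $r_2\in\sigma$ with $r_3$ at height $2$, but also $r_1\in\sigma$, $r_2\in T$ with $r_3$ at height $-1$; in both the dependence of $A\setminus\{r_1\}$ can have sign pattern $(3,2)$), you never state the ``restrictive divisibility condition'' under which the slice $P\cap\{z=1\}$ fails to acquire a seventh lattice point, you never carry out the congruence/counting argument that is supposed to produce that point, and you never exhibit the integral width-one functionals in the surviving borderline cases. These are exactly the steps the lemma consists of, so as written there is a genuine gap: the conclusion ``$P^1$ is $(4,1)$'' is asserted modulo computations that are not done, and it is not evident that the sub-cases close uniformly in the two parameters $(p,q)$. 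For comparison, the paper avoids this two-parameter analysis entirely: assuming both $P^3$ and $P^1$ are $(3,2)$ (after first disposing of the easy case where some $u_i$ is not a vertex of $P$), it projects $A$ along the line $r_1r_2r_3$ and uses that each $\conv(A\setminus\{u_i\})$ is a $(2,1)$ configuration which must have size five, i.e.\ each projected line $\overline{u_i}\overline{u_j}$ must be at lattice distance one from the projected point $\overline{r}$; a three-case analysis of the position of $\overline{u_2}$ relative to $\overline{u_1}\overline{u_3}$ then yields either an extra lattice point or width one with almost no computation. If you want to keep your coordinate-based approach you must actually supply the case list, the lattice-point production argument, and the explicit functionals; alternatively, the projection argument is the shorter way to finish.
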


\begin{proof}
Suppose $P^3$ has signature $(3,2)$. If the segment $r_1r_2$ whose extension gives $r_3$ is the unique non-edge in $P^3$ then the statement clearly holds. So, let us assume that $r_1r_2$ is an edge in $P^3$. Hence, $r_1r_2r_3$ is an edge in $P$ and $r_1$ is indeed a vertex.

Now we have that both $P^3$ and $P^1$ are polytopes of size $5$ and signatures either $(4,1)$ or $(3,2)$ and need to show that at least one of them is $(4,1)$.
If some $u_i$ is not a vertex (say $u_2$) the proof is easy: $P$ is a tetrahedron with vertices $r_1r_3u_1u_3$, and it decomposes into the tetrahedra $P^1$ and $P^3$. $u_2$ cannot be in the boundary of any of $P^1$ or $P^3$, because that would imply a coplanarity, so it lies in the interior of one of them, which is then of signature $(4,1)$, as we wanted to show.

It remains to prove the lemma when all $u_1$, $u_2$ and $u_3$ are vertices. Assume, to get a contradiction, that both $P^3$ and $P^1$ are of signature $(3,2)$. 

Let us consider the projection of $P$ to a plane along the direction of the edge $r_1r_2r_3$, and let us denote by $\overline{r}$ the projection of this edge, and use $\overline{u_i}$ for the projection of each point $u_i$. Considering what we said above, and taking into account that we do not want $5$ coplanar points, it is easy to see that $\overline{u_1},\overline{u_2},\overline{u_3}$  lie in different rays with vertex $\overline{r}$, and that they are all contained in an open halfspace defined by a hyperplane passing through $\overline{r}$. Assume that $\overline{u_2}$ is in the middle ray of the three. There are three possibilities for the position of $\overline{u_2}$ relative to the line $\overline{u_1u_3}$, as shown in the figure below:\\

\begin{center}
\includegraphics[scale=1.25]{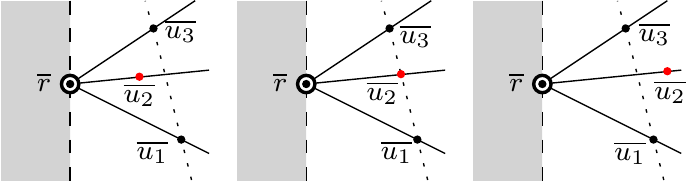}
\end{center}

Now, $P\setminus \{u_i\}:=\conv(A\setminus \{u_i\})$ has signature $(2,1)$ for each $i=1,2,3$ and its size is $5$ if and only if $r_1r_2r_3$ and $u_iu_j$ are contained in parallel consecutive lattice planes for $i,j \in\{1,2,3\}$, $i \neq j$. In the projection, this implies that each $\overline{u_i}\overline{u_j}$ must span a straight line at lattice distance $1$ from the point $\overline{r}$. Let us see each case separately:

\begin{itemize}[leftmargin=.5cm]
\item In the first case, $\overline{u_1u_3}$ cannot be at lattice distance $1$ from $\overline{r}$, since $\overline{u_2}$ is an interior point of the triangle $\overline{ru_1u_3}$.

\item In the second case, $\overline{u_1}$, $\overline{u_2}$ and $\overline{u_3}$ would all lie in the same straight line at lattice distance $1$ from $\overline{r}$, which implies that the segment $r_1r_2r_3$ is at lattice distance $1$ from the vertical plane projecting to $\overline{u_1}$, $\overline{u_2}$ and $\overline{u_3}$. That is, $P$ has width one.

\item In the third case, in particular we need to have lattice distance $1$ from $\overline{r}$ to the line spanned by $\overline{u_1u_3}$. Besides, if the edge $\overline{u_1u_3}$ were not primitive in the projection, then either $\overline{u_1u_2}$ or $\overline{u_2u_3}$ would not be at lattice distance $1$ from $\overline{r}$. In particular, $\overline{ru_1u_3}$ is a unimodular triangle, and so is $r_iu_1u_3$ for each $i=1,2,3$.

So without loss of generality we can now assume that $r_1=(1,0,0)$, $r_2=(0,0,0)$, $r_3=(-1,0,0)$ (proyection in the direction of functional $x$), $u_1=(0,1,0)$ and $u_3=(0,0,1)$ (see Figure~\ref{fig:CaseW}). For the sixth point we take coordinates $u_2=(a,b,c)$. Since its projection is $(b,c)$ we have $b,c>0$. By the symmetry of the configuration we assume without loss of generality that $b\ge c$ and $a \ge 0$.  

\begin{figure}[h]
\begin{center}
\includegraphics[scale=1.25]{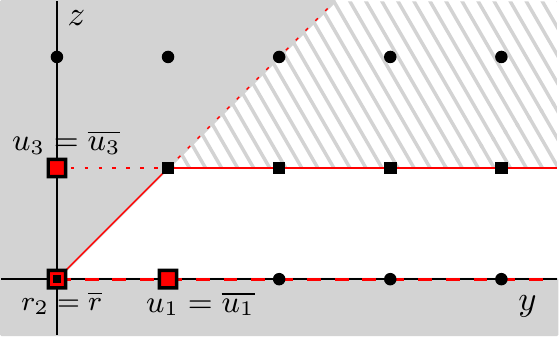}
\caption{The analysis of the third case. The projection onto the plane $x=0$ in the direction of edge $r_1r_2r_3$ is displayed. The double red square is the projection $\overline{r}=r_2$, and the other two red squares are $u_1=\overline{u_1}$ and $u_3=\overline{u_1}$. Black dots correspond to the projection of lattice points and black squares represent the possible projections of the point $u_2$.}
\label{fig:CaseW}
\end{center}
\end{figure}

Suppose now that $c >1$. Then the point $(1,1)$ is closer to the line spanned by $\overline{u_2u_3}$ than the point $\overline{r}$, contradicting the fact that $\overline{r}$ must be at distance $1$ from this line.  Hence we can only have $c=1$, and the configuration has width $1$ with respect to the functional $z$. 
\end{itemize}

\end{proof}

Thus, for the rest of this section we assume that $P^3$ has size $5$ and signature $(4,1)$.
There are three ways to extend a segment $r_1r_2$ of a polytope of size five and signature $(4,1)$, each corresponding to a different  oriented matroid. 

\begin{itemize}
\item $r_1$ is the interior point of $P^3$, and $r_2$ is a vertex: oriented matroid  4.21.
\item $r_2$ is the interior point of $P^3$, and $r_1$ is a vertex: oriented matroid  4.22.
\item both $r_1$ and $r_2$ are vertices of $P^3$: oriented matroid 4.11.
\end{itemize}

We do this computationally: we take each one of the $8$ configurations of size $5$ and signature $(4,1)$ displayed in Table~\ref{table:5points}, choose a pair of vertices $\{r_1,r_2\}$, and check whether the configurations have size $6$ after adding the sixth point $r_3=2r_2-r_1$. 

There are $6$, $6$ and $5$ equivalence classes, respectively.

\subsubsection*{Oriented matroid  4.21}

\begin{center}
\includegraphics[width=0.4\textwidth]{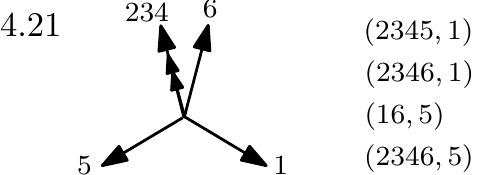}
\end{center}

We  organize the points $1:=p_1$ to $6:=p_6$ so that:
\begin{itemize}
\item $1$ is the interior point of the base polytope of signature $(4,1)$, namely the one with vertex set $2345$.
\item $(5,16)$ forms a $(2,1)$-circuit.
\item $2$, $3$ and $4$ are chosen in increasing order of the absolute value of the volume of $(P^6)^i$.
\end{itemize}
In terms of equivalence, the previous order is unique. 

A triangulation of $P$ is $P=P^6 \cup T_{2356} \cup T_{2456} \cup T_{3456}$. Since we chose $P^6$ to have size $5$, $P$ will have size $6$ if and only if $T_{2356}$,$T_{2456}$ and $T_{3456}$ are empty tetrahedra.
This gives configurations F.1 to F.6 in Table~\ref{table:6points-I}. 

\subsubsection*{Oriented matroid  4.22}

\begin{center}
\includegraphics[width=0.4\textwidth]{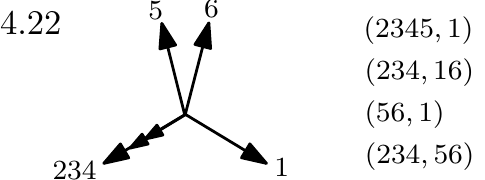}
\end{center}

We will organize the points $1:=p_1$ to $6:=p_6$ so that:
\begin{itemize}
\item $1$ is the interior point of the base polytope of signature $(4,1)$, namely the one with vertex set $2345$.
\item $(1,56)$ forms a $(2,1)$-circuit.
\item $2$, $3$ and $4$ are chosen in increasing order of the absolute value of the volume of $(P^6)^i$.
\end{itemize}
In terms of equivalence, the previous order is unique. 

A triangulation of $P$ is $P=P^6 \cup T_{2346}$. Since we chose $P^6$ to have size $5$, $P$ will have size $6$ if and only if $T_{2346}$ is an empty tetrahedron. This gives configurations F.7 to F.12 in Table~\ref{table:6points-I}.

\subsubsection*{Oriented matroid  4.11}

\begin{center}
\includegraphics[width=0.4\textwidth]{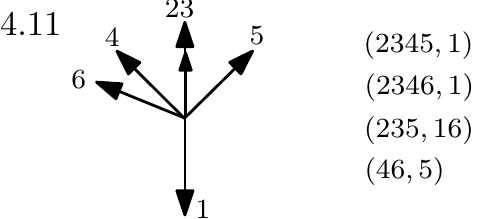}
\end{center}

We will organize the points $1:=p_1$ to $6:=p_6$ so that:
\begin{itemize}
\item $1$ is the interior point of the base polytope of signature $(4,1)$, namely the one with vertex set $2345$.
\item $(5,46)$ forms a $(2,1)$-circuit.
\item $2$ and $3$ are chosen in increasing order of the absolute value of the volume of $(P^6)^i$.
\end{itemize}

In terms of equivalence, the previous order is unique.

A triangulation of $P$ is $P=P^6 \cup T_{2356}$. Since we chose $P^6$ to have size $5$, $P$ will have size $6$ if and only if $T_{2356}$ is an empty tetrahedron. This gives configurations F.13 to F.17 in Table~\ref{table:6points-I}. 

%

\bigskip
Let us now analyze the width of these configurations. In all the cases F.1 to F.17 we have (at least) one interior point so the width is at least $2$. We were able to find functionals giving width $2$ to all of them.

In summary:

\begin{theorem}
\label{thm:6points-4}
Among the lattice $3$-polytopes of size six with some $(2,1)$ coplanarity and no other coplanarity, there are exactly $17$ equivalence classes of width two, and none of larger width, as shown in Table~\ref{table:6points-I}. All of them are non-dps.
\end{theorem}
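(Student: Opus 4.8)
The plan is to assemble the ingredients already developed in this section into a single wrap-up argument. Write $A=\{r_1,r_2,r_3,u_1,u_2,u_3\}$ with $r_2$ the mid-point of $r_1r_3$, and suppose $P=\conv(A)$ has size six, width $>1$, and contains no coplanarity other than this $(2,1)$-circuit (so, in particular, we are in none of Cases A--E). Since $r_2$ is not a vertex of $P$, at least one of $r_1,r_3$ is; assume $r_3$ is, so $P^3=\conv(A\setminus\{r_3\})$ has size five. By Lemma~\ref{lemma:2-1-weak} its signature is $(4,1)$ or $(3,2)$, and by Lemma~\ref{lemma:2-1-strong}, in the $(3,2)$ case we may instead delete $r_1$ to obtain a size-five polytope of signature $(4,1)$. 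Hence in every case some size-five sub-polytope of $A$, obtained by removing an extremal collinear point, has signature $(4,1)$; this is the configuration we will extend.

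Next, classify how the collinear triple sits over this $(4,1)$-configuration. Its interior point is one of its five points, and relative to the segment $r_1r_2$ (whose doubling produces the third collinear point) either $r_1$ is that interior point, or $r_2$ is, or neither. By the oriented matroid classification of Section~\ref{subsec:om}, these three possibilities force the oriented matroid of $A$ to be $4.21$, $4.22$, or $4.11$ respectively; these are exactly the three oriented matroids appearing in the row ``$\exists\ (2,1)$ coplanarity'' of Table~\ref{table:om} that are actually realizable (the entry $2.1$ having $6$ vertices, hence width one by Theorem~\ref{thm:Howe}).

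Then carry out the $(4,1)$-extension method for each oriented matroid, with the canonical labellings shown in the displayed diagrams: point $1$ is the interior point of the base $(4,1)$-polytope on vertices $2345$, the prescribed pair forms the $(2,1)$-circuit, and the remaining free points are ordered by increasing absolute value of the volume of the relevant deletions, so that the representative is unique within each class. Running through the at most $8\times 4!$ identifications of $P^6$ with one of the eight size-five signature-$(4,1)$ polytopes of Table~\ref{table:5points}, computing the missing point via $r_3=2r_2-r_1$, and testing size six by checking (with the emptiness criterion of Section~\ref{subsec:4points}) that every tetrahedron in the stated triangulation is empty — $P=P^6\cup T_{2356}\cup T_{2456}\cup T_{3456}$ for $4.21$, $P=P^6\cup T_{2346}$ for $4.22$, $P=P^6\cup T_{2356}$ for $4.11$ — the computer search produces $6$, $6$ and $5$ classes, that is, the configurations F.1--F.17 of Table~\ref{table:6points-I}. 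They are pairwise inequivalent: configurations with different oriented matroid are trivially distinct, and those sharing an oriented matroid have distinct volume vectors, so Theorem~\ref{thm:VolumeVectors} separates them.

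Finally, width and dps are immediate. Each of the $17$ configurations has an interior lattice point (the point labelled $1$), hence width at least $2$, since a width-one polytope has all its lattice points on its boundary; the functionals recorded in Table~\ref{table:6points-I} achieve width exactly $2$, so none has larger width. None is dps, as each contains the collinear triple $r_1,r_2,r_3$. The main obstacle is purely one of bookkeeping: verifying that the canonical labellings really do single out a unique representative per class, that the trichotomy $4.21/4.22/4.11$ is exhaustive and disjoint (which rests entirely on Lemmas~\ref{lemma:2-1-weak} and~\ref{lemma:2-1-strong} together with the oriented matroid list), and that the enumeration over the eight base configurations and the $4!$ relabellings is complete — there is no further geometric difficulty once those two lemmas are in hand.
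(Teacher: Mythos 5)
Your proposal is correct and follows the paper's own argument essentially verbatim: Lemmas~\ref{lemma:2-1-weak} and~\ref{lemma:2-1-strong} reduce to a deletion of signature $(4,1)$, the trichotomy on which collinear point is interior yields exactly the oriented matroids $4.21$, $4.22$ and $4.11$, and the same computational $(4,1)$-extension with the same triangulations produces the $6+6+5$ classes F.1--F.17, with width and dps handled as in the paper. One small inaccuracy, irrelevant to the argument since the exclusion already follows from the two lemmas: oriented matroid $2.1$ has four vertices and no interior points (two collinear triples in parallel planes), not six vertices, so its elimination comes from the width-one analysis rather than from Theorem~\ref{thm:Howe}.
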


\bigskip


\section{Cases H and G: Polytopes with no coplanarities}
\label{sec:gp-intpoint}

Since there are no coplanarities, these configurations are all dps. 
Apart from that, they must have unimodular triangles as facets and at least one interior point (otherwise they have width one by Howe's Theorem). 
There are only two uniform oriented matroids with interior points, namely, the oriented matroids 6.1 and 6.2 of Figure~\ref{fig:OM} (see also Figures~\ref{fig:62} and~\ref{fig:61} below).

Both of them happen to have two vertices $p_i$ and $p_j$ (the elements labeled 5 and 6 in the figures) such that $P^i$ and $P^j$ have signature $(4,1)$. Hence, the full classification of these polytopes can be done by an exhaustive exploration of all the possible ways to glue together two of the eight  polytopes of signature $(4,1)$ and size $5$ from Table~\ref{table:5points}. We do this computationally. We first choose a pair of these such polytopes and then we choose from each one of the four subtetrahedra. If both tetrahedra are of the same (or equivalent) type, there is one (or more) unimodular transformation that allows us to glue the two polytopes by four points. The result is a configuration of six lattice points and it only remains to check whether the convex hull has additional lattice points or not.

Now, an empty subtetrahedron in a polytope of signature $(4,1)$ consists of three of the vertices and the interior point. There are two ways of gluing these two tetrahedra (and respectively the polytopes of signature $(4,1)$), either making the interior point in both coincide or not. This leads to the two cases in this section.
\medskip

\subsection{Case G: Polytopes with no coplanarities and one interior point}
\label{sec:2*(4,1)-1}

The oriented matroid is $6.2$* (see Figure~\ref{fig:62}) and the two subpolytopes of signature $(4,1)$ share the same interior point.

\begin{figure}[htb]
\begin{center}
\includegraphics[width=0.4\textwidth]{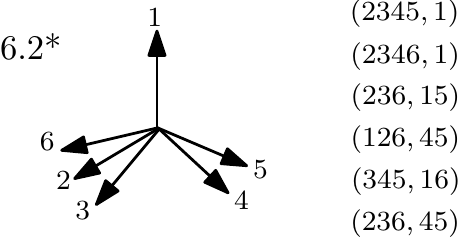}
\caption{The oriented matroid 6.2}
\label{fig:62}
\end{center}
\end{figure}

We  organize the points $1:=p_1$ to $6:=p_6$ so that:
\begin{itemize}
\item $5$ and $6$ are the points so that $P^5$ and $P^6$ are of signature $(4,1)$.
\item $1$ is the interior point of those polytopes.
\item $4$ is such that the configuration contains the circuit $(236,45)$.
\item $2$ is such that the configuration contains the circuit $(126,45)$. This leaves $3$ to be such that the configuration contains the circuit $(345,16)$.
\end{itemize}
In terms of equivalence, the previous order is unique.

A triangulation of $P$ is $P=P^5 \cup T_{2356}$. Since we chose $P^5$ to have size $5$, $P$ will have size $6$ if and only if $T_{2356}$ is an empty tetrahedron.
The computation shows that there are $20$ equivalence classes of size $6$, displayed in Table~\ref{table:6points-II}.

\medskip

\subsection{Case H: Polytopes with no coplanarities and two interior points}
\label{sec:2*(4,1)-2}

The oriented matroid is $6.1$* (see Figure~\ref{fig:62}) and the two subpolytopes of signature $(4,1)$ have different interior points: the interior point of one is a vertex of the other and viceversa.

This oriented matroid has one symmetry, the permutation $(14)(23)(56)$. When comparing two of these configurations, we need to compare them as well after performing this permutation on one of them.

\begin{figure}[htb]
\begin{center}
\includegraphics[width=0.4\textwidth]{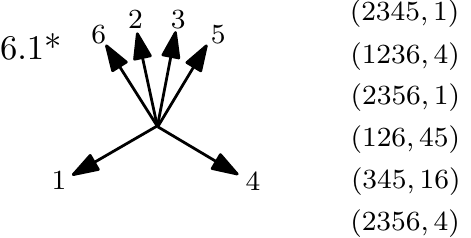}
\caption{The oriented matroid 6.1}
\label{fig:61}
\end{center}
\end{figure}

We will organize the points $1:=p_1$ to $6:=p_6$ so that:
\begin{itemize}
\item $5$ and $6$ are the points so that $P^5$ and $P^6$ are of signature $(4,1)$.
\item $1$ and $4$ are the interior points in $P^6$ and $P^5$, respectively.
\item $2$ is such that the configuration contains the circuit $(126,45)$. This leaves $3$ to be such that the configuration contains the circuit $(345,16)$.
\end{itemize}

With that ordering of the points, a triangulation of $P$ is:
\[
P=P^6 \cup T_{1256} \cup T_{1356} \cup T_{1456} \cup T_{2456} \cup T_{3456}
\]
Since we chose $P^6$ to have size $5$, $P$ will have size $6$ if and only if those five tetrahedra are empty.
The computation shows that there are $12$ equivalence classes of size $6$, displayed in Table~\ref{table:6points-II}.
\bigskip

Let us now analyze the width of the configurations G and H. Since they all contain an interior point, they all have width at least $2$. In all of them the tables show a functional of width two except in 
the one labeled H.12, for which we give one of width three. 
Let us show that this configuration does not have width two:

\begin{lemma}
The  configuration $A$ consisting of the columns of the following matrix has width at least three:
\[
\left( \begin{array}{cccccc}
 0  & 1  & 0  & 2  &-1 &  4\\
 0  & 0  & 0  & 5  &-2 & 11\\
 0  & 0  & 1  & 1  &-1 &  2\end{array} \right)\]
\end{lemma}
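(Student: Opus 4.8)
The plan is to run the same kind of argument as in the previous width lemma, but to replace the ``centroid/midpoint'' observations (which are unavailable here, the configuration having no coplanarities) by the two affine dependences that the construction of Case~H hands us for free. Recall that there $P^6=\conv\{p_1,\dots,p_5\}$ and $P^5=\conv\{p_1,p_2,p_3,p_4,p_6\}$ are both of signature $(4,1)$, with interior points $p_1$ and $p_4$ respectively. For the matrix in the statement this translates into
\[
11\,p_1 = p_2 + 3\,p_3 + 2\,p_4 + 5\,p_5 , \qquad
11\,p_4 = 3\,p_1 + 2\,p_2 + p_3 + 5\,p_6 ,
\]
both of which are verified by direct substitution. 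Each is an affine relation (the coefficients on its two sides sum to the same number, $11$), so it is preserved when we add a constant to a linear functional. Also note that $p_1,p_2,p_3,p_4$ already span $\R^3$ affinely, so a linear functional constant on all of $A$ is the trivial one.

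First I would take an arbitrary non-constant integer linear functional $f$ and assume, for contradiction, that it gives width at most $2$; since $f$ is linear, this means $\max_i f(p_i)-\min_i f(p_i)\le 2$. After subtracting $\min_i f(p_i)$ --- which changes neither the width nor the validity of the two relations --- we may assume $a_i:=f(p_i)\in\{0,1,2\}$ for all $i$. Applying $f$ to the first relation gives $11\,a_1 = a_2+3a_3+2a_4+5a_5$, whose right-hand side lies between $0$ and $2(1+3+2+5)=22$. Hence if $a_1=0$ the right-hand side vanishes, forcing $a_2=a_3=a_4=a_5=0$, and if $a_1=2$ it attains its maximum $22$, forcing $a_2=a_3=a_4=a_5=2$. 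Symmetrically, the second relation makes $a_4=0$ force $a_1=a_2=a_3=a_6=0$ and $a_4=2$ force $a_1=a_2=a_3=a_6=2$.

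Then I would finish with a short case analysis. If $a_1\in\{0,2\}$, the first relation makes $a_2,\dots,a_5$ all equal to $a_1$, and feeding this into the second relation forces $a_6=a_1$ as well, so $f$ is constant on $A$ and hence identically constant --- contradiction. Thus $a_1=1$, and by the same reasoning $a_4=1$. Substituting $a_1=a_4=1$ into the first relation leaves $a_2+3a_3+5a_5=9$ with each variable in $\{0,1,2\}$; the extremes $a_5\in\{0,2\}$ are impossible (the right side would be $9$ or $-1$, out of the attainable range $[0,8]$ or negative) and $a_5=1$ leaves $a_2+3a_3=4$, whose only solution in the cube is $a_2=a_3=1$. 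The second relation then gives $11=3+2+1+5a_6$, i.e.\ $a_6=1$. So all $a_i$ coincide, $f$ is constant, the final contradiction; therefore every non-constant integer functional has width at least $3$.

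I do not anticipate a real obstacle: once the two dependences are on the table, everything else is a bounded arithmetic check with no branching. The only step needing a moment's thought is choosing the right pair of circuits --- the two five-element circuits supported on $\{1,2,3,4,5\}$ and $\{1,2,3,4,6\}$, i.e.\ the ones exhibiting the interior points $p_1$ and $p_4$ --- since it is precisely the rigidity of the weight pattern $(1,3,2,5)$ together with the modulus $11$ that collapses the ``middle'' case $a_1=1$ immediately, sparing us the three-functional analysis that was needed for configuration~C.3.
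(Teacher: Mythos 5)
Your proof is correct, and it takes a genuinely different route from the paper's. Both affine dependences you write down check out against the coordinates (they are exactly the two $(4,1)$-circuits of $A$ supported on $\{1,2,3,4,5\}$ and $\{1,2,3,4,6\}$, as encoded in the volume vector of H.12), applying a normalized integer functional with values in $\{0,1,2\}$ to them is legitimate because the coefficients on the two sides sum equally, and your case analysis does force all six values to coincide, which contradicts non-constancy since $p_1,p_2,p_3,p_4$ affinely span $\R^3$. The paper instead argues geometrically: because $p_1$ and $p_4$ are interior lattice points of $\conv(A)$, any functional of width two must take the middle value at both, hence be constant on the segment $p_1p_4$; such a functional then factors through the projection $\pi(x,y,z)=(x-2z,y-5z)$, which maps $\Z^3$ onto $\Z^2$ and kills that segment, and one checks that the projected planar configuration has width at least three because it contains two non-parallel lattice segments of lattice length three. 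Your argument dispenses with the projection and the two-dimensional width check entirely, never uses interiority explicitly (the all-positive coefficient patterns carry that information), and is a self-contained bounded arithmetic verification needing only the two circuits; the paper's method, on the other hand, is a reusable geometric template (the same reduction-to-a-lower-dimensional-width idea underlies the width-three analysis of C.3), at the price of a coordinate change and a surjectivity check, whereas yours is tailored to the specific weights $(1,3,2,5)$ and modulus $11$ of this configuration.
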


\begin{proof}
We label the six points $p_1$, \dots, $p_6$ in the given order. Remember that points $p_1$ and $p_4$ are in the interior of $\conv(A)$. Thus, a functional $f$ giving width two to $A$ must be constant on the segment $p_1p_4$. Consider the projection $\pi(x,y,z)=(x-2z,y-5z)$ sending that segment to the origin, and sending $\Z^3$ surjectively to $\Z^2$. If $f$ gives width two to $A$, it must factor via a linear functional $f_0:\Z^2\to \Z$ giving width two to $\pi(A)$, which is the following configuration:
\[\left( \begin{array}{cccccc}
 0  & 1  & -2  & 0  &1 &  0\\
 0  & 0  & -5  & 0  &3 & 1\end{array} \right)\]
So, we only need to show that this two-dimensional configuration has width at least three, which is easy: It contains the non-parallel segments $\conv\{(1,0),(1,3)\}$ and $\conv\{(-2,-5),(1,1)\}$, both of length three.
\end{proof}

In summary:

\begin{theorem}
\label{thm:6points-5}
Among the lattice $3$-polytopes of size six with no coplanarities and at least one interior point, there are exactly $31$ equivalence classes of width two, $1$ of width three, and none of larger width, as shown in Tables~\ref{table:6points-II}.
All of them are dps.
\end{theorem}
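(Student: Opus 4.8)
The plan is to merge the two case analyses of Section~\ref{sec:gp-intpoint} (Cases G and H) with the width computations. Since the configuration $A$ has no coplanarities, its oriented matroid is uniform, hence one of the four labeled $6.M$ in Figure~\ref{fig:OM}. By Theorem~\ref{thm:Howe} a lattice $3$-polytope of width $>1$ cannot have all its lattice points as vertices, so $P=\conv(A)$ has an interior lattice point; this rules out $6.3$ and $6.4$, leaving oriented matroid $6.1$ (two interior points) or $6.2$ (one interior point). In either case one reads off from Figures~\ref{fig:62} and~\ref{fig:61} two vertices $p_5,p_6$ such that both $P^5$ and $P^6$ have signature $(4,1)$. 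As $P$ has size six, $P^5$ and $P^6$ have size five, hence each is $\Z$-equivalent to one of the eight signature-$(4,1)$ polytopes in Table~\ref{table:5points}. An empty subtetrahedron of a signature-$(4,1)$ polytope uses three of its vertices together with its interior point, and $P$ is recovered by gluing $P^5$ and $P^6$ along the common four-point set $\{p_1,p_2,p_3,p_4\}$; whether the interior points of $P^5$ and $P^6$ coincide distinguishes the one-interior-point case (oriented matroid $6.2$, Case G) from the two-interior-points case (oriented matroid $6.1$, Case H).

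Next I would carry out the exhaustive gluing enumeration, as in Sections~\ref{sec:2*(4,1)-1} and~\ref{sec:2*(4,1)-2}. For each pair (with repetition) of the eight signature-$(4,1)$ polytopes, each choice of an empty subtetrahedron in each, and each of the $\le 4!$ relabelings identifying the four shared points — at most $\left(\binom{8}{2}+8\right)\cdot 4^2\cdot 4!$ candidates in total — one forms the resulting six-point configuration, computes its volume vector, and decides whether $\conv(A)$ has size six by the triangulations already set up ($P=P^5\cup T_{2356}$ in Case G; $P=P^6\cup T_{1256}\cup T_{1356}\cup T_{1456}\cup T_{2456}\cup T_{3456}$ in Case H) together with the emptiness test of Section~\ref{subsec:4points}. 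Sorting the survivors by volume vector modulo $\Z$-equivalence — and, in Case H, also modulo the oriented-matroid symmetry $(14)(23)(56)$ of $6.1$ — yields exactly $20$ classes in Case G and $12$ in Case H, with representatives and volume vectors as recorded in Table~\ref{table:6points-II}.

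It then remains to pin down the widths and the dps property. Every one of these $32$ polytopes contains an interior lattice point, so its width is at least two; Table~\ref{table:6points-II} exhibits an integer functional of width two for all of them except $H.12$, for which the lemma proved immediately above gives width at least three (and the table lists a functional of width exactly three). That lemma proceeds by noting that a functional of width two on $A$ must be constant on the segment joining the two interior points, projecting along that segment via $\pi(x,y,z)=(x-2z,\,y-5z)$ to a two-dimensional configuration, and observing that the image contains two non-parallel segments of lattice length three. Finally, having no coplanarities means having no $(2,1)$- or $(2,2)$-circuit, so all $32$ polytopes are dps. Counting gives $20+11=31$ classes of width two and one class ($H.12$) of width three, and none of larger width, as claimed.

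The main obstacle is the exhaustiveness and correctness of the computational enumeration. One must be certain that every coplanarity-free size-six configuration with an interior point really does arise as one of these gluings — this rests on the oriented-matroid facts that $6.1$ and $6.2$ are the only uniform oriented matroids of six points in $\R^3$ with interior points and that each admits two vertices whose removal leaves signature $(4,1)$ — and that all of the $\le\left(\binom{8}{2}+8\right)\cdot 4^2\cdot 4!$ gluings have genuinely been tried. A secondary subtlety is that two surviving configurations with different-looking coordinates might still be $\Z$-equivalent; this is controlled by Theorem~\ref{thm:VolumeVectors} once one disposes by hand of the exceptional volume vectors flagged in the Remark following Table~\ref{table:5points}, using Lemmas~\ref{lemma:(3,1)thm} and~\ref{thm:lemma(2,1)} where needed.
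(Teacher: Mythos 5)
Your proposal follows essentially the same route as the paper: reduce to the uniform oriented matroids $6.1$ and $6.2$, enumerate all gluings of two signature-$(4,1)$ size-five polytopes along four points with the same triangulation-based emptiness checks, and settle widths via the interior-point bound, the tabulated functionals, and the projection lemma for H.12. The argument and the counts ($20+11$ of width two, H.12 of width three, all dps) match the paper's proof, so I have nothing to add beyond noting that your extra caution about exceptional volume vectors is unnecessary here, since every configuration in Cases G and H contains a unimodular tetrahedron.
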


\bigskip


\section{Tables}
\label{sec:tables}

We here list all the $3$-polytopes of size six and width greater than one. They are contained in Tables~\ref{table:6points-I} and~\ref{table:6points-II}, where we give, for each of them, its oriented matroid, a reference ID related to where in this paper it is obtained, its volume vector, its width, and a functional where its width is achieved. In what follows we also give a $3\times 6$ integer matrix for each of them, whose columns are the six lattice points in a representative of the class. The volume vector in the tables is always given with respect to the order of columns in the corresponding matrix.

\begin{remark}
Among the $76$ polytopes of size $6$ and width $>1$, all except A.1, A.2, B.14, B.15 and C.3 contain a unimodular tetrahedron. In particular their volume vectors are primitive and, in the light of Theorem~\ref{thm:VolumeVectors}, they characterize the configurations uniquely. 

For the other five the same is not true. It is easy to show that each of the configurations A.1, A.2, B.14, B.15 and C.3 has 
the same volume vector as one containing extra points in its convex hull.
For example, B.14, B.15 and C.3 are affinely equivalent to B.2, B.10 and C.2, respectively, via affine maps of determinant $1/3$. Dilating the $x$ direction in the latter we get configurations with the volume vectors of the former, but with extra points in their convex hulls.
The situation for A.1 and A.2 is similar, except now the volume vectors are divisible by two instead of three. The configurations obtained dividing their volume vector by two have width one.
\end{remark}

\setlength{\tabcolsep}{3pt}
\setlength{\arraycolsep}{3pt}

\begin{table}[htbp]
\centering
\small
\begin{tabular}{|c|l|rrrrrrrrrrrrrrr|ccc|} 
\hline
\textbf{OM} &\textbf{Id.} &\multicolumn{15}{c|}{\textbf{Volume vector}} &\multicolumn{3}{c|}{\torre{ \textbf{Width,}}{ \textbf{functional}}} \\
\hline
\multicolumn{20}{|c|}{ \textbf{Polytopes containing $5$ coplanar points}}\\
\hline
3.2&A.1& $0$ & $0$ & $2$ & $0$ & $0$ & $4$ & $0$ & $2$ & $0$ & $-4$ & $0$ & $4$& $-2$ & $-8$ & $-2$ &\ &$2$&$z$\\
\hline
3.3&A.2& $0$ & $0$ & $2$ & $0$ & $4$ & $4$ & $0$ & $2$ & $0$ & $-4$ & $0$ & $0$ & $-2$ & $-4$ & $-2$ &\ &$2$ & $z$\\
\hline
\hline
\multicolumn{20}{|c|}{ \textbf{Polytopes containing a $(3,1)$ coplanarity, but no $5$ coplanar points}}\\
\hline
\multicolumn{20}{|c|}{One $(3,1)$ coplanarity leaves the other two points at opposite sides}\\
\hline
3.8&B.7 & $0$ & $1$ & $-1$ & $-1$ & $1$ & $-2$ & $1$ & $-1$ & $0$ & $2$ & $3$ & $-3$ & $0$ & $6$ & $0$  &\ &$2$& $z$\\
\hline
3.9&B.9& $0$ & $1$ & $-1$ & $-1$ & $1$ & $-1$ & $1$ & $-1$ & $1$ & $0$ & $3$ & $-3$ & $0$ & $3$ & $-3$  &\ &$2$& $z$\\
\hline
3.13&B.10& $0$ & $1$ & $-1$ & $-1$ & $1$ & $0$ & $1$ & $-1$ & $0$ & $0$ & $3$ & $-3$ & $-2$ & $2$ & $-2$  &\ &$2$& $z$\\
&B.15& $0$ & $3$ & $-3$ & $-3$ & $3$ & $0$ & $3$ & $-3$ & $0$ & $0$ & $9$ & $-9$ & $-6$ & $6$ & $-6$  &\ &$2$&$x$\\
\hline
4.13*&B.1& $0$ & $1$ & $-1$ & $-1$ & $1$ & $-4$ & $1$ & $-1$ & $1$ & $3$ & $3$ & $-3$ & $3$ & $9$ & $0$  &\ &$2$&$z$ \\
\hline
4.17*&B.2& $0$ & $ 1$ & $-1$ & $-1$ & $ 1$ & $-3$ & $ 1$ & $-1$ & $ 0$ & $ 3$ & $ 3$ & $-3$ & $ 1$ & $ 8$ & $ 1$  &\ &$2$& $z$ \\
&B.14& $0$ & $ 3$ & $-3$ & $-3$ & $ 3$ & $-9$ & $ 3$ & $-3$ & $ 0$ & $ 9$ & $ 9$ & $-9$ & $ 3$ & $24$ & $ 3$ &\ &$2$& $x$\\
\hline
4.18&B.8& $0$ & $1$ & $-1$ & $-1$ & $1$ & $-1$ & $1$ & $-1$ & $0$ & $1$ & $3$ & $-3$ & $-1$ & $4$ & $-1$ &\ &$2$& $z$\\
\hline
&B.5& $0$ & $1$ & $-1$ & $-1$ & $1$ & $-5$ & $1$ & $-1$ & $1$ & $4$ & $3$ & $-3$ & $4$ & $11$ & $1$ &\ &$2$& $z$ \\
5.10*&B.6& $0$ & $1$ & $-1$ & $-1$ & $1$ & $-6$ & $1$ & $-1$ & $1$ & $5$ & $3$ & $-3$ & $5$ & $13$ & $2$ &\ & $2$& $z$\\
&B.11& $0$ & $1$ & $-3$ & $-1$ & $3$ & $-8$ & $1$ & $-3$ & $1$ & $7$ & $3$ & $-9$ & $5$ & $19$ & $2$ &\ &$2$& $x$\\
\hline
5.11*&B.12& $0$ & $1$ & $-3$ & $-1$ & $3$ & $-2$ & $1$ & $-3$ & $1$ & $1$ & $3$ & $-9$ & $-1$ & $7$ & $-4$ &\ &$2$& $x$\\
\hline
&B.3& $0$ & $1$ & $-1$ & $-1$ & $1$ & $-2$ & $1$ & $-1$ & $1$ & $1$ & $3$ & $-3$ & $1$ & $5$ & $-2$ &\ &$2$& $z$ \\
5.12*&B.4& $0$ & $1$ & $-1$ & $-1$ & $1$ & $-3$ & $1$ & $-1$ & $1$ & $2$ & $3$ & $-3$ & $2$ & $7$ & $-1$ &\ &$2$& $z$ \\
&B.13& $0$ & $1$ & $-3$ & $-1$ & $3$ & $-5$ & $1$ & $-3$ & $1$ & $4$ & $3$ & $-9$ & $2$ & $13$ & $-1$ &\ &$2$& $x$\\
\hline
\multicolumn{20}{|c|}{All $(3,1)$ coplanarities leave the other two points at the same side}\\
\hline
3.6&C.1& $0$ & $1$ & $2$ & $-1$ & $-2$ & $0$ & $1$ & $2$ & $-1$ & $1$ & $3$ & $6$ & $0$ & $0$ & $3$ &\ & $2$&$x$\\
\hline
3.11&C.2& $0$ & $1$ & $2$ & $-1$ & $-2$ & $0$ & $1$ & $2$ & $0$ & $0$ & $3$ & $6$ & $1$ & $-1$ & $1$ &\ & $2$& $x$\\
&C.3& $0$ & $3$ & $6$ & $-3$ & $-6$ & $0$ & $3$ & $6$ & $0$ & $0$ & $9$ & $18$ & $3$ & $-3$ & $3$ &\ & $3$& $x$\\
\hline
5.4*&C.4&$0$ & $1$ & $5$ & $-1$ & $-5$ & $1$ & $1$ & $5$ & $-2$ & $1$ & $3$ & $15$ & $1$ & $-4$ & $7$ &\ & $2$& $y$\\
&C.5& $0$ & $1$ & $7$ & $-1$ & $-7$ & $1$ & $1$ & $7$ & $-2$ & $1$ & $3$ & $21$ & $3$ & $-6$ & $9$ &\ & $2$& $y$\\
\hline
5.6*&C.6& $0$ &$1$ & $3$ & $-1$ & $-3$ & $-2$ & $1$ & $3$ & $1$ & $1$ & $3$ & $9$ & $5$ & $1$ & $2$  &\ & $2$& $x$\\ 
\hline
\hline
\multicolumn{20}{|c|}{ \textbf{Polytopes containing a $(2,2)$ coplanarity, but none of the above}}\\
\hline
\multicolumn{20}{|c|}{One $(2,2)$ coplanarity leaves the other two points at opposite sides}\\
\hline
5.13&D.1& $0$ & $1$ & $-1$ & $1$ & $-1$ & $-4$ & $-1$ & $1$ & $3$ & $-1$ & $-1$ & $1$ & $5$ & $1$ & $-2$ &\ & $2$& $z$\\
&D.2& $0$ & $1$ & $-1$ & $1$ & $-1$ & $-5$ & $-1$ & $1$ & $4$ & $-1$ & $-1$ & $1$ & $7$ & $2$ & $-3$ &\ & $2$& $z$\\
\hline
\multicolumn{20}{|c|}{All $(2,2)$ coplanarities leave the other two points at the same side}\\
\hline
5.5&E.1& $0$ & $1$ & $5$ & $1$ & $5$ & $1$ & $-1$ & $-5$ & $-2$ & $-1$ & $-1$ & $-5$ & $1$ & $2$ & $-3$ &\ & $2$& $y$\\
&E.2& $0$ & $1$ & $7$ & $1$ & $7$ & $2$ & $-1$ & $-7$ & $-3$ & $-1$ & $-1$ & $-7$ & $1$ & $3$ & $-4$ &\ & $2$& $x-y$\\
\hline
\hline
\multicolumn{20}{|c|}{ \textbf{Polytopes containing a $(2,1)$ coplanarity, but none of the above}}\\
\hline
&F.1& $1$&$ -1 $&$ -2 $&$ 1 $&$ 2 $&$ 0 $&$ -1 $&$ -2 $&$ 0 $&$ 0 $&$ -4 $&$ -7 $&$ -1 $&$ 1 $&$ -1$ &\ & $2$& $y$\\
&F.2& $1$&$ -2 $&$ -4 $&$ 1 $&$ 2 $&$ 0 $&$ -1 $&$ -2 $&$ 0 $&$ 0 $&$ -5 $&$ -9 $&$ -2 $&$ 1 $&$ -1$ &\ & $2$& $z$\\
4.21&F.3& $2$&$ -1 $&$ -2 $&$ 1 $&$ 2 $&$ 0 $&$ -1 $&$ -2 $&$ 0 $&$ 0 $&$ -5 $&$ -8 $&$ -1 $&$ 1 $&$ -1$ &\ & $2$& $x-z$\\
&F.4& $1$&$ -3 $&$ -6 $&$ 2 $&$ 4 $&$ 0 $&$ -1 $&$ -2 $&$ 0 $&$ 0 $&$ -7 $&$ -13$&$ -3 $&$ 2 $&$ -1$&\  & $2$& $z$\\
&F.5& $3$&$ -2 $&$ -4 $&$ 1 $&$ 2 $&$ 0 $&$ -1 $&$ -2 $&$ 0 $&$ 0 $&$ -7 $&$ -11$&$ -2 $&$ 1 $&$ -1$&\  & $2$& $x-z$\\
&F.6& $5$&$ -3 $&$ -6 $&$ 2 $&$ 4 $&$ 0 $&$ -1 $&$ -2 $&$ 0 $&$ 0 $&$ -11 $&$ -17$&$ -3 $&$ 2 $&$ -1$&\  & $2$& $x-z$\\
\hline
&F.7& $1$&$ -1 $&$ 1 $&$ 1 $&$ -1 $&$ 0 $&$ -1 $&$ 1 $&$ 0 $&$ 0 $&$ -4 $&$ 2 $&$ 2 $&$ -2 $&$ 2 $&\ & $2$& $y$\\
&F.8& $1$&$ -2 $&$ 2 $&$ 1 $&$ -1 $&$ 0 $&$ -1 $&$ 1 $&$ 0 $&$ 0 $&$ -5 $&$ 3 $&$ 4 $&$ -2 $&$ 2 $&\ & $2$& $z$\\
4.22&F.9& $2 $&$ -1 $&$ 1 $&$ 1 $&$ -1 $&$ 0 $&$ -1 $&$ 1 $&$ 0 $&$ 0 $&$ -5 $&$ 1 $&$ 2 $&$ -2 $&$ 2 $&\ & $2$& $z$\\
&F.10&$ 1$&$ -3 $&$ 3 $&$ 2 $&$ -2 $&$ 0 $&$ -1 $&$ 1 $&$ 0 $&$ 0 $&$ -7 $&$ 5 $&$ 6 $&$ -4 $&$ 2$ &\ & $2$& $z$\\
&F.11&$ 3$&$ -2 $&$ 2 $&$ 1 $&$ -1 $&$ 0 $&$ -1 $&$ 1 $&$ 0 $&$ 0 $&$ -7 $&$ 1 $&$ 4 $&$ -2 $&$ 2$ &\ & $2$& $z$\\
&F.12&$ 5$&$ -3 $&$ 3 $&$ 2 $&$ -2 $&$ 0 $&$ -1 $&$ 1 $&$ 0 $&$ 0 $&$ -11 $&$ 1$&$ 6 $&$ -4 $&$ 2$ &\ & $2$& $z$\\
\hline
&F.13& $1 $&$ -1 $&$ -3 $&$ 1 $&$ 2 $&$ 1 $&$ -1 $&$ -2 $&$ -1 $&$ 0 $&$ -4 $&$ -8 $&$ -4 $&$ 0 $&$ 0$ &\ & $2$& $y$\\
&F.14&$ 1$&$ -1 $&$ -3 $&$ 2 $&$ 4 $&$ 2 $&$ -1 $&$ -2 $&$ -1 $&$ 0 $&$ -5 $&$ -10 $&$ -5 $&$ 0 $&$ 0$&\ & $2$& $z$\\
4.11&F.15&$ 2$&$ -1 $&$ -4 $&$ 1 $&$ 2 $&$ 1 $&$ -1 $&$ -2 $&$ -1 $&$ 0 $&$ -5 $&$ -10 $&$ -5 $&$ 0 $&$ 0$&\  & $2$& $x-z$\\
&F.16&$ 2$&$ -1 $&$ -4 $&$ 3 $&$ 6 $&$ 3 $&$ -1 $&$ -2 $&$ -1 $&$ 0 $&$ -7 $&$ -14 $&$ -7 $&$ 0 $&$ 0$&\  & $2$& $x-z$\\
&F.17&$ 3$&$ -1 $&$ -5 $&$ 2 $&$ 4 $&$ 2 $&$ -1 $&$ -2 $&$ -1 $&$ 0 $&$ -7 $&$ -14 $&$ -7 $&$ 0 $&$ 0$&\ & $2$& $x-z$\\
\hline
\end{tabular}
\caption{Lattice $3$-polytopes of size $6$ and width $>1$ with some coplanarity. The ones that are dps are marked with an * in the first column}
\label{table:6points-I}

\end{table}

\begin{table}[htbp]
\centering
\small
\begin{tabular}{|c|l|rrrrrrrrrrrrrrr|ccc|} 
\hline
\textbf{OM} &\textbf{Id.} &\multicolumn{15}{c|}{\textbf{Volume vector}} &\multicolumn{3}{c|}{\torre{ \textbf{Width,}}{ \textbf{functional}}} \\
\hline
\multicolumn{20}{|c|}{\textbf{Polytopes with no coplanarities and $1$ interior point}}\\
\hline
&G.1 & $1 $&$ -1 $&$ -1 $&$ 1 $&$ 3 $&$ -2 $&$ -1 $&$ -2 $&$ 1 $&$ 1 $&$ -4 $&$ -7 $&$ 3 $&$ 5 $&$ -1$ &\ & $2$& $y$ \\
&G.2 & $1 $&$ -1 $&$ -3 $&$ 1 $&$ 5 $&$ -2 $&$ -1 $&$ -4 $&$ 1 $&$ 1 $&$ -4 $&$ -13 $&$ 1 $&$ 7 $&$ -3$ &\ & $2$& $y$ \\
&G.3 & $1 $&$ -1 $&$ -1 $&$ 1 $&$ 2 $&$ -1 $&$ -2 $&$ -3 $&$ 1 $&$ 1 $&$ -5 $&$ -7 $&$ 2 $&$ 3 $&$ -1$ &\ & $2$& $z$\\
&G.4 & $1 $&$ -1 $&$ -2 $&$ 1 $&$ 3 $&$ -1 $&$ -2 $&$ -5 $&$ 1 $&$ 1 $&$ -5 $&$ -11 $&$ 1 $&$ 4 $&$ -3$ &\ & $2$& $z$\\
&G.5 & $1 $&$ -2 $&$ -5 $&$ 1 $&$ 4 $&$ -3 $&$ -1 $&$ -3 $&$ 1 $&$ 1 $&$ -5 $&$ -13 $&$ 1 $&$ 7 $&$ -2$ &\ & $2$& $x$\\ 
&G.6 & $2 $&$ -1 $&$ -1 $&$ 1 $&$ 5 $&$ -2 $&$ -1 $&$ -3 $&$ 1 $&$ 1 $&$ -5 $&$ -11 $&$ 3 $&$ 7 $&$ -2$ &\ & $2$& $x-z$\\
&G.7 & $2 $&$ -1 $&$ -3 $&$ 1 $&$ 7 $&$ -2 $&$ -1 $&$ -5 $&$ 1 $&$ 1 $&$ -5 $&$ -17 $&$ 1 $&$ 9 $&$ -4$ &\ & $2$& $x-z$\\
&G.8 & $1 $&$ -2 $&$ -3 $&$ 1 $&$ 2 $&$ -1 $&$ -3 $&$ -5 $&$ 1 $&$ 1 $&$ -7 $&$ -11 $&$ 1 $&$ 3 $&$ -2$ &\ & $2$& $z$\\
&G.9 & $2 $&$ -1 $&$ -1 $&$ 1 $&$ 3 $&$ -1 $&$ -3 $&$ -5 $&$ 1 $&$ 2 $&$ -7 $&$ -11 $&$ 2 $&$ 5 $&$ -1$ &\ & $2$& $z$\\
6.2*&G.10 &$2 $&$ -3 $&$ -7 $&$ 1 $&$ 5 $&$ -4 $&$ -1 $&$ -3 $&$ 1 $&$ 1 $&$ -7 $&$ -17$&$ 1$&$ 9$&$ -2$ &\ & $2$& $z$\\
&G.11&$3 $&$ -2 $&$ -1 $&$ 1 $&$ 5 $&$ -3 $&$ -1 $&$ -2 $&$ 1 $&$ 1 $&$ -7 $&$ -11 $&$ 5 $&$ 8 $&$ -1$ &\ & $2$& $z$\\
&G.12&$3 $&$ -1 $&$ -1 $&$ 1 $&$ 4 $&$ -1 $&$ -2 $&$ -5 $&$ 1 $&$ 1 $&$ -7 $&$ -13 $&$ 2 $&$ 5 $&$ -3$ &\ & $2$& $x-z$\\
&G.13&$3 $&$ -1 $&$ -2 $&$ 1 $&$ 5 $&$ -1 $&$ -2 $&$ -7 $&$ 1 $&$ 1 $&$ -7 $&$ -17 $&$ 1 $&$ 6 $&$ -5$ &\ & $2$& $x-z$\\
&G.14&$3 $&$ -2 $&$ -5 $&$ 1 $&$ 7 $&$ -3 $&$ -1 $&$ -4 $&$ 1 $&$ 1 $&$ -7 $&$ -19 $&$ 1 $&$ 10 $&$ -3$ &\ & $2$& $x-z$\\
&G.15&$5 $&$ -2 $&$ -1 $&$ 1 $&$ 3 $&$ -1 $&$ -3 $&$ -4 $&$ 1 $&$ 1 $&$ -11 $&$ -13 $&$ 3 $&$ 4 $&$ -1$ &\ & $2$& $z$\\
&G.16&$5 $&$ -2 $&$ -3 $&$ 1 $&$ 4 $&$ -1 $&$ -3 $&$ -7 $&$ 1 $&$ 1 $&$ -11 $&$ -19 $&$ 1 $&$ 5 $&$ -4$ &\ & $2$& $x-z$\\
&G.17&$5 $&$ -3 $&$ -5 $&$ 1 $&$ 5 $&$ -2 $&$ -2 $&$ -5 $&$ 1 $&$ 1 $&$ -11 $&$ -20 $&$ 1 $&$ 7 $&$ -3$ &\ & $2$& $x-z$\\
&G.18&$3 $&$ -4 $&$ -5 $&$ 1 $&$ 2 $&$ -1 $&$ -5 $&$ -7 $&$ 1 $&$ 1 $&$ -13 $&$ -17 $&$ 1 $&$ 3 $&$ -2$ &\ & $2$& $z$\\
&G.19&$4 $&$ -5 $&$ -7 $&$ 1 $&$ 3 $&$ -2 $&$ -3 $&$ -5 $&$ 1 $&$ 1 $&$ -13 $&$ -19 $&$ 1 $&$ 5 $&$ -2$ &\ & $2$& $z$\\
&G.20&$5 $&$ -3 $&$ -4 $&$ 1 $&$ 3 $&$ -1 $&$ -4 $&$ -7 $&$ 1 $&$ 1 $&$ -13 $&$ -19 $&$ 1 $&$ 4 $&$ -3$ &\ & $2$& $x-z$ \\
\hline
\hline
\multicolumn{20}{|c|}{ \textbf{Polytopes with no coplanarities and $2$ interior points}}\\
\hline
&H.1 &$1 $&$ -1 $&$ 5 $&$ 1 $&$ 1 $&$ -6 $&$ -1 $&$ -2 $&$ 7 $&$ -1 $&$ -4 $&$ 1 $&$ 19 $&$ 5 $&$ -9 $&\ & $2$& $x-2y$ \\
&H.2 &$1 $&$ -1 $&$ 7 $&$ 1 $&$ 1 $&$ -8 $&$ -1 $&$ -2 $&$ 9 $&$ -1 $&$ -4 $&$ 3 $&$ 25 $&$ 7 $&$-11 $ &\ & $2$& $x-z$ \\
&H.3 &$1 $&$ -2 $&$ 5 $&$ 1 $&$ 1 $&$ -7 $&$ -1 $&$ -2 $&$ 9 $&$ -1 $&$ -5 $&$ 1 $&$ 23 $&$ 6 $&$-11 $ &\ & $2$& $x-y$\\
&H.4 &$1 $&$ -2 $&$ 7 $&$ 1 $&$ 1 $&$ -9 $&$ -1 $&$ -2 $&$ 11 $&$ -1 $&$ -5 $&$ 3$&$ 29 $&$ 8 $&$ -13 $ &\ & $2$& $x-z$\\
&H.5 &$2 $&$ -1 $&$ 7 $&$ 1 $&$ 1 $&$ -4 $&$ -1 $&$ -3 $&$ 5 $&$ -1 $&$ -5 $&$ 1 $&$ 17 $&$ 3 $&$ -8 $&\ & $2$& $x-z$\\ 
6.1*&H.6 &$2 $&$ -1 $&$ 11 $&$ 1 $&$ 1 $&$ -6$&$ -1 $&$ -3 $&$ 7 $&$ -1 $&$ -5 $&$ 5$&$ 25 $&$ 5 $&$ -10 $&\ &$2$& $x-z$\\
&H.7 &$2 $&$ -3 $&$ 7 $&$ 1 $&$ 1 $&$ -5 $&$ -1 $&$ -3 $&$ 8 $&$ -1 $&$ -7 $&$ 1 $&$ 23 $&$ 4 $&$-11 $ &\ & $2$& $x-z$\\
&H.8 &$2 $&$ -3 $&$ 11 $&$ 1 $&$ 1 $&$ -7$&$ -1 $&$ -3 $&$ 10 $&$ -1 $&$ -7 $&$ 5$&$ 31 $&$ 6 $&$ -13$ &\ & $2$& $x-z$\\
&H.9 &$3 $&$ -1 $&$ 7 $&$ 2 $&$ 1 $&$ -5 $&$ -1 $&$ -2 $&$ 3 $&$ -1 $&$ -7 $&$ 1 $&$ 16 $&$ 3 $&$ -5 $&\ & $2$& $x$\\
&H.10&$3 $&$ -2 $&$ 13 $&$ 1 $&$ 1 $&$ -5$&$ -1 $&$ -4$&$ 7 $&$ -1 $&$ -7 $&$ 5$&$ 27 $&$ 4 $&$ -11$ &\ & $2$& $x-z$\\
&H.11&$3 $&$ -5 $&$ 11 $&$ 2 $&$ 1 $&$ -9$&$ -1 $&$ -2$&$ 7 $&$ -1 $&$ -11$&$ 5$&$ 32 $&$ 7 $&$ -9$ &\ & $2$& $x-z$\\
&H.12&$5 $&$ -2 $&$ 11 $&$ 3 $&$ 1 $&$ -7$&$ -1 $&$ -2$&$ 3 $&$ -1 $&$ -11$&$ 3$&$ 23 $&$ 4 $&$ -5$ &\ & $3$& $x-z$\\
\hline
\end{tabular}
\caption{Lattice $3$-polytopes of size $6$ and width $>1$ with no coplanarities. All dps}
\label{table:6points-II}
\end{table}

\begin{multicols}{3}
\footnotesize

\[A.1\left( \begin{array}{cccccc}
 0 & 1 & 0 &-1 & 0 & 1\\ 
 0 & 0 & 1 & 0 & 2 & 1\\ 
 0 & 0 & 0 & 0 & 0 & 2\end{array} \right)\]

\[A.2\left( \begin{array}{cccccc}
 0 & 1 & 0 &-1 & 0 & 1\\ 
 0 & 1 & 1 & 1 & 2 & 0\\ 
 0 & 0 & 0 & 0 & 0 & 2\end{array} \right)\]

\[B.1\left( \begin{array}{cccccc}
 0 & 1 & 0 &-1 & 0 & 1\\ 
 0 & 0 & 1 &-1 & 0 & 4\\ 
 0 & 0 & 0 & 0 & 1 & -1\end{array} \right)\]

\[B.2\left( \begin{array}{cccccc}
 0 & 1 & 0 &-1 & 0 & 0\\ 
 0 & 0 & 1 &-1 & 0 & 3\\ 
 0 & 0 & 0 & 0 & 1 & -1\end{array} \right)\]

\[B.3\left( \begin{array}{cccccc}
 0 & 1 & 0 &-1 & 0 & 1\\ 
 0 & 0 & 1 &-1 & 0 & 2\\ 
 0 & 0 & 0 & 0 & 1 & -1\end{array} \right)\]

\[B.4\left( \begin{array}{cccccc}
  0 & 1 & 0 &-1 & 0 & 1\\ 
  0 & 0 & 1 &-1 & 0 & 3\\ 
  0 & 0 & 0 & 0 & 1 & -1\end{array} \right)\]

\[B.5\left( \begin{array}{cccccc}
  0 & 1 & 0 &-1 & 0 & 1\\ 
  0 & 0 & 1 &-1 & 0 & 5\\ 
  0 & 0 & 0 & 0 & 1 & -1\end{array} \right)\]

\[B.6\left( \begin{array}{cccccc}
  0 & 1 & 0 &-1 & 0 & 1\\ 
  0 & 0 & 1 &-1 & 0 & 6\\ 
  0 & 0 & 0 & 0 & 1 & -1\end{array} \right)\]

\[B.7\left( \begin{array}{cccccc}
  0 & 1 & 0 &-1 & 0 & 0\\ 
  0 & 0 & 1 &-1 & 0 & 2\\ 
  0 & 0 & 0 & 0 & 1 & -1\end{array} \right)\]

\[B.8\left( \begin{array}{cccccc}
  0 & 1 & 0 &-1 & 0 & 0\\ 
  0 & 0 & 1 &-1 & 0 & 1\\ 
  0 & 0 & 0 & 0 & 1 & -1\end{array} \right)\]

\[B.9\left( \begin{array}{cccccc}
  0 & 1 & 0 &-1 & 0 & 1\\ 
  0 & 0 & 1 &-1 & 0 & 1\\ 
  0 & 0 & 0 & 0 & 1 & -1\end{array} \right)\]

\[B.10\left( \begin{array}{cccccc}
  0 & 1 & 0 &-1 & 0 & 0\\ 
  0 & 0 & 1 &-1 & 0 & 0\\ 
  0 & 0 & 0 & 0 & 1 & -1\end{array} \right)\]

\[B.11\left( \begin{array}{cccccc}
  0 & 1 & 0 &-1 & 0 & 1\\ 
  0 & 0 & 1 &-1 & 0 & 8\\ 
  0 & 0 & 0 & 0 & 1 & -3\end{array} \right)\]

\[B.12\left( \begin{array}{cccccc}
  0 & 1 & 0 &-1 & 0 & 1\\ 
  0 & 0 & 1 &-1 & 0 & 2\\ 
  0 & 0 & 0 & 0 & 1 & -3\end{array} \right)\]

\[B.13\left( \begin{array}{cccccc}
  0 & 1 & 0 &-1 & 0 & 1\\ 
  0 & 0 & 1 &-1 & 0 & 5\\ 
  0 & 0 & 0 & 0 & 1 & -3\end{array} \right)\]

\[B.14\left( \begin{array}{cccccc}
  0 & 1 & 0 &-1 & 1 & -1\\ 
  0 & 0 & 1 &-1 & 2 & 1\\ 
  0 & 0 & 0 & 0 & 3 & -3\end{array} \right)\]

\[B.15\left( \begin{array}{cccccc}
  0 & 1 & 0 &-1 & 1 & -1\\ 
  0 & 0 & 1 &-1 & 2 & -2\\ 
  0 & 0 & 0 & 0 & 3 & -3\end{array} \right)\]

\[C.1\left( \begin{array}{cccccc}
  0 & 1 & 0 &-1 & 0 & -1\\ 
  0 & 0 & 1 &-1 & 0 & 0\\ 
  0 & 0 & 0 & 0 & 1 & 2\end{array} \right)\]

\[C.2\left( \begin{array}{cccccc}
  0 & 1 & 0 &-1 & 0 & 0\\ 
  0 & 0 & 1 &-1 & 0 & 0\\ 
  0 & 0 & 0 & 0 & 1 & 2\end{array} \right)\]

\[C.3\left( \begin{array}{cccccc}
  0 & 1 & 0 &-1 & 1 & 2\\ 
  0 & 0 & 1 &-1 & 2 & 4\\ 
  0 & 0 & 0 & 0 & 3 & 6\end{array} \right)\]

\[C.4\left( \begin{array}{cccccc}
  0 & 1 & 0 &-1 & 0 & -2\\ 
  0 & 0 & 1 &-1 & 0 & -1\\ 
  0 & 0 & 0 & 0 & 1 & 5\end{array} \right)\]

\[C.5\left( \begin{array}{cccccc}
  0 & 1 & 0 &-1 & 0 & -2\\ 
  0 & 0 & 1 &-1 & 0 & -1\\ 
  0 & 0 & 0 & 0 & 1 & 7\end{array} \right)\]

\[C.6\left( \begin{array}{cccccc}
  0 & 0 & -1 &1 & 1 & 1\\ 
  0 & 1 & -1 &0 & 1 & 2\\ 
  0 & 0 & 0 & 0 & 1 & 3\end{array} \right)\]

\[D.1\left( \begin{array}{cccccc}
  0 & 1 & 0 & 1 & 0 & 3\\ 
  0 & 0 & 1 & 1 & 0 & 4\\ 
  0 & 0 & 0 & 0 & 1 & -1\end{array} \right)\]

\[D.2\left( \begin{array}{cccccc}
  0 & 1 & 0 & 1 & 0 & 4\\ 
  0 & 0 & 1 & 1 & 0 & 5\\ 
  0 & 0 & 0 & 0 & 1 & -1\end{array} \right)\]

\[E.1\left( \begin{array}{cccccc}
  0 & 1 & 0 & 1 & 0 & -2\\ 
  0 & 0 & 1 & 1 & 0 & -1\\ 
  0 & 0 & 0 & 0 & 1 & 5\end{array} \right)\]

\[E.2\left( \begin{array}{cccccc}
  0 & 1 & 0 & 1 & 0 & -3\\ 
  0 & 0 & 1 & 1 & 0 & -2\\ 
  0 & 0 & 0 & 0 & 1 & 7\end{array} \right)\]

\[F.1\left( \begin{array}{cccccc}
  0 &1  &0  &-2 &1 &2\\ 
  0 &1 &0  &-1  & 0 &0\\ 
  0 &1 &1  &-2  & 0 &0\end{array} \right)\]

\[F.2\left( \begin{array}{cccccc}
  0 &1 &0  &-1  & 1 &2\\ 
  0 &2 &0  &-1  & 0 &0\\ 
  0 &1 &1  &-1  & 0 &0\end{array} \right)\]

\[F.3\left( \begin{array}{cccccc}
  0 &0 &1  & 1  &-1  &-2\\ 
  0 &0 &0  & 2  &-1  &-2\\ 
  0 &1 &0  & 1  &-1  &-2\end{array} \right)\]

\[F.4\left( \begin{array}{cccccc}
 0 &0 &1  &-1  & 1  & 2\\ 
 0 &0 &3  &-2  & 0  & 0\\ 
 0 &1 &1  &-1  & 0  & 0\end{array} \right)\]

\[F.5\left( \begin{array}{cccccc}
 0 &0 &1  & 1  &-1  &-2\\ 
 0 &0 &0  & 3  &-2  &-4\\ 
 0 &1 &0  & 1  &-1  &-2\end{array} \right)\]

\[F.6\left( \begin{array}{cccccc}
 0 &1 &2  & 0  &-1  &-2\\ 
 0 &0 &5  & 0  &-2  &-4\\ 
 0 &0 &1  & 1  &-1  &-2\end{array} \right)\]

\[F.7\left( \begin{array}{cccccc}
 0 &1 &0  &-2  & 1  &-1\\ 
 0 &1 &0  &-1  & 0  & 0\\ 
 0 &1 &1  &-2  & 0  & 0\end{array} \right)\]

\[F.8\left( \begin{array}{cccccc}
 0 &1 &0  &-1  & 1  &-1\\ 
 0 &2 &0  &-1  & 0  & 0\\ 
 0 &1 &1  &-1  & 0 &0\end{array} \right)\]

\[F.9\left( \begin{array}{cccccc}
 0 &0 &1  & 1  &-1 &1\\ 
 0 &0 &0  & 2  &-1 &1\\ 
 0 &1 &0  & 1  &-1 &1\end{array} \right)\]

\[F.10\left( \begin{array}{cccccc}
 0 &0 &1  &-1  & 1  &-1\\ 
 0 &0 &3  &-2  & 0  & 0\\ 
 0 &1 &1  &-1  & 0  & 0\end{array} \right)\]

\[F.11\left( \begin{array}{cccccc}
 0 &0 &1  & 1  &-1  & 1\\ 
 0 &0 &0  & 3  &-2  & 2\\ 
 0 &1 &0  & 1  &-1  & 1\end{array} \right)\]

\[F.12\left( \begin{array}{cccccc}
 0 &1 &2  & 0  &-1  & 1\\ 
 0 &0 &5  & 0  &-2  & 2\\ 
 0 &0 &1  & 1  &-1  & 1\end{array} \right)\]

\[F.13\left( \begin{array}{cccccc}
 0 &1 &0  &-2  & 1  & 4\\ 
 0 &1 &0  &-1  & 0  & 1\\ 
 0 &1 &1  &-2  & 0  & 2\end{array} \right)\]

\[F.14\left( \begin{array}{cccccc}
 0 &0  &-1  & 1  & 1  & 1\\ 
 0 &0  &-1  & 2  & 0  &-2\\ 
 0 &1  &-1  & 1  & 0  &-1\end{array} \right)\]

\[F.15\left( \begin{array}{cccccc}
 0 &0 &1  & 1  &-1  &-3\\ 
 0 &0 &0  & 2  &-1  &-4\\ 
 0 &1 &0  & 1  &-1  &-3\end{array} \right)\]

\[F.16\left( \begin{array}{cccccc}
 0 &1  &-1  & 0  & 1  & 2\\ 
 0 &0  &-2  & 0  & 3  & 6\\ 
 0 &0  &-1  & 1  & 1  & 1\end{array} \right)\]

\[F.17\left( \begin{array}{cccccc}
 0 &1 &1  & 0  &-1  &-2\\ 
 0 &0 &3  & 0  &-2  &-4\\ 
 0 &0 &1  & 1  &-1  &-3\end{array} \right)\]
 
\[G.1\left( \begin{array}{cccccc}
 0 &1  &-2  & 0  & 1  & 4\\ 
 0 &1  &-1  & 0  & 0  & 1\\ 
 0 &1  &-2  & 1  & 0  & 3\end{array} \right)\]

\[G.2\left( \begin{array}{cccccc}
 0 &1  &-2  & 0  & 1 & 6\\ 
 0 &1  &-1  & 0  & 0 & 1\\ 
 0 &1  &-2  & 1  & 0  & 3\end{array} \right)\]

\[G.3\left( \begin{array}{cccccc}
 0  &-1  & 1  & 0  & 1  & 1\\ 
 0  &-1  & 2  & 0  & 0  & -1\\ 
 0  &-1  & 1  & 1  & 0  & 0\end{array} \right)\]

\[G.4\left( \begin{array}{cccccc}
 0  &-1  & 0  & 1  & 1  & 3\\ 
 0  &-1  & 0  & 2  & 0  & 1\\ 
 0  &-1  & 1  & 1  & 0  & 0\end{array} \right)\]

\[G.5\left( \begin{array}{cccccc}
 0  & 0  & 1  &-1  & 1  & 1\\ 
 0  & 0  & 2  &-1  &0  & -3\\ 
 0  & 1  & 1  &-1  &0  & -2\end{array} \right)\]

\[G.6\left( \begin{array}{cccccc}
 0  & 0  & 1  & 1  &-1  &-3\\ 
 0  & 0  & 2  & 0  &-1  &-5\\ 
 0  & 1  & 1  & 0  &-1  &-4\end{array} \right)\]

\[G.7\left( \begin{array}{cccccc}
 0  & 0  & 1  & 1  &-1  &-5\\ 
 0  & 0  & 2  & 0  &-1  &-7\\ 
 0  & 1  & 1  & 0  &-1  &-6\end{array} \right)\]

\[G.8\left( \begin{array}{cccccc}
 0  &-1  & 0  & 1  & 1  & 2\\ 
 0  &-2  & 0  & 3  & 0  & 1\\ 
 0  &-1  & 1  & 1  & 0  & 0\end{array} \right)\]

\[G.9\left( \begin{array}{cccccc}
 0  &-1  & 0  & 1  & 1  & 2\\ 
 0  &-2  & 0  & 0  & 3  & 5\\ 
 0  &-1  & 1  & 0  & 1  & 1\end{array} \right)\]

\[G.10\left( \begin{array}{cccccc}
 0  & 1  & 0  &-1  & 1  & 2\\ 
 0  & 0  & 0  &-2  & 3  & 7\\ 
 0  & 0  & 1  &-1  & 1  & 1\end{array} \right)\]

\[G.11\left( \begin{array}{cccccc}
 0  & 0  & 1  & 1  &-1  &-2\\ 
 0  & 0  & 0  & 3  &-2  &-1\\ 
 0  & 1  & 0  & 1  &-1  &-1\end{array} \right)\]

\[G.12\left( \begin{array}{cccccc}
 0  & 1  & 1  & 0  &-1  &-3\\ 
 0  & 3  & 0  & 0  &-2  &-5\\ 
 0  & 1  & 0  & 1  &-1  &-2\end{array} \right)\]

\[G.13\left( \begin{array}{cccccc}
 0  & 1  & 1  & 0  &-1  &-4\\ 
 0  & 3  & 0  & 0  &-2  &-7\\ 
 0  & 1  & 0  & 1  &-1  &-3\end{array} \right)\]

\[G.14\left( \begin{array}{cccccc}
 0  & 0  & 1  & 1  &-1  &-4\\ 
 0  & 0  & 0  & 3  &-2  &-5\\ 
 0  & 1  & 0  & 1  &-1  &-3\end{array} \right)\]

\[G.15\left( \begin{array}{cccccc}
 0  & 0  & 1  & 2  &-1  &-1\\ 
 0  & 0  & 0  & 5  &-2  &-1\\ 
 0  & 1  & 0  & 1  &-1  &-1\end{array} \right)\]

\[G.16\left( \begin{array}{cccccc}
 0  & 0  & 1  & 2  &-1  &-2\\ 
 0  & 0  & 0  & 5  &-2  &-3\\ 
 0  & 1  & 0  & 1  &-1  &-2\end{array} \right)\]

\[G.17\left( \begin{array}{cccccc}
 0  & 2  & 1  & 0  &-1  &-3\\ 
 0  & 5  & 0  & 0  &-2  &-5\\ 
 0  & 1  & 0  & 1  &-1  &-2\end{array} \right)\]

\[G.18\left( \begin{array}{cccccc}
 0  &-1  & 2  & 0  & 1  & 1\\ 
 0  &-1  & 5  & 0  & 0  & -1\\ 
 0  &-1  & 1  & 1  & 0  & 0\end{array} \right)\]

\[G.19\left( \begin{array}{cccccc}
 0  & 1  & 2  &-1  &0  & -1\\ 
 0  & 0  & 5  &-1  &0  & -2\\ 
 0  & 0  & 1  &-1  & 1  & 1\end{array} \right)\]

\[G.20\left( \begin{array}{cccccc}
 0  & 0  & 2  & 1  &-1  &-2\\ 
 0  & 0  & 5  & 0  &-1  &-3\\ 
 0  & 1  & 1  & 0  &-1  &-2\end{array} \right)\]

\[H.1\left( \begin{array}{cccccc}
 0  & 1  & 0  &-2  & 1 &-12\\
 0  & 1  & 0  &-1  & 0 & -7\\
 0  & 1  & 1  &-2  & 0 &-13\end{array} \right)\]

\[H.2\left( \begin{array}{cccccc}
 0  & 0  & 1  &-2  & 1 &-15\\
 0  & 0  & 1  &-1  & 0 & -8\\
 0  & 1  & 1  &-2  & 0 &-17\end{array} \right)\]

\[H.3\left( \begin{array}{cccccc}
 0  & 1  & 0  &-1  & 1 & -7\\
 0  & 2  & 0  &-1  & 0 & -9\\
 0  & 1  & 1  &-1  & 0 & -8\end{array} \right)\]

\[H.4\left( \begin{array}{cccccc}
 0  & 0  & 1  &-1  & 1 & -8\\
 0  & 0  & 2  &-1  & 0 & -9\\
 0  & 1  & 1  &-1  & 0 &-10\end{array} \right)\]

\[H.5\left( \begin{array}{cccccc}
 0  & 0  & 1  & 1  &-1 &  3\\
 0  & 0  & 0  & 2  &-1 &  7\\
 0  & 1  & 0  & 1  &-1 &  2\end{array} \right)\]

\[H.6\left( \begin{array}{cccccc}
 0  & 1  & 0  & 1  &-1 &  4\\
 0  & 0  & 0  & 2  &-1 & 11\\
 0  & 0  & 1  & 1  &-1 &  5\end{array} \right)\]

\[H.7\left( \begin{array}{cccccc}
 0  & 0  & 1  &-1  & 1 & -4\\
 0  & 0  & 0  &-2  & 3 & -7\\
 0  & 1  & 0  &-1  & 1 & -5\end{array} \right)\]

\[H.8\left( \begin{array}{cccccc}
 0  & 1  & 0  &-1  & 1 & -7\\
 0  & 0  & 0  &-2  & 3 &-11\\
 0  & 0  & 1  &-1  & 1 & -6\end{array} \right)\]

\[H.9\left( \begin{array}{cccccc}
 0  & 0  & 1  & 1  &-1 &  2\\
 0  & 0  & 3  & 0  &-2 & -1\\
 0  & 1  & 1  & 0  &-1 & -1\end{array} \right)\]

\[H.10\left( \begin{array}{cccccc}
 0  & 0  & 1  & 1  &-1 &  4\\
 0  & 0  & 0  & 3  &-2 & 13\\
 0  & 1  & 0  & 1  &-1 &  3\end{array} \right)\]

\[H.11\left( \begin{array}{cccccc}
 0  & 1  & 2  &-1  & 0 & -5\\
 0  & 0  & 5  &-2  & 0 & -9\\
 0  & 0  & 1  &-1  & 1 & -4\end{array} \right)\]

\[H.12\left( \begin{array}{cccccc}
 0  & 1  & 0  & 2  &-1 &  4\\
 0  & 0  & 0  & 5  &-2 & 11\\
 0  & 0  & 1  & 1  &-1 &  2\end{array} \right)\]
\end{multicols}

\medskip


\begin{thebibliography}{99}

\bibitem{Beckwith_etal}
Olivia Beckwith, Matthew Grimm, Jenya Soprunova, Bradley Weaver.
\newblock Minkowski length of 3D lattice polytopes.
\newblock {\em Discrete Comput. Geom.} 48 (2012), 1137--1158.

\bibitem{5points}
M.~Blanco and F.~Santos.
\newblock Lattice $3$-polytopes with five lattice points.
\newblock Preprint, 19 pages, September 2014, {\tt arXiv:1409.6701}.


\bibitem{ChoiLamReznick}
M.~D.~Choi, T.~Y.~Lam and B.~Reznick.
\newblock Lattice polytopes with distinct pair-sums.
\newblock {\em Discrete Comput. Geom.} 27 (2002), 65--72.


\bibitem{Curcic}
M. Curcic.
\newblock Lattice polytopes with distinct pair-sums.
Ph.~D.~Thesis, University of Illinois at Urbana-Champaign, 2012.

\bibitem{deLoeraRambauSantos2010}
J.~A.~De Loera, J.~Rambau, F.~Santos.
\newblock
 \emph{Triangulations: Structures for Algorithms and Applications.} Algorithms and Computation in Mathematics, Vol.~25. Springer-Verlag, 2010.

\bibitem{Duong}
H.~Duong.~Minimal volume $k$-point lattice $d$-simplices.
Ph. D. Thesis, 
University of Illinois at Urbana-Champaign, 2008.
An abridged version is in {\tt arXiv:0804.2910}, preprint, 22 pp., 2008.


\bibitem{Hensley}
D.~Hensley. Lattice vertex polytopes with interior lattice points. \emph{Pacific J.~Math.} 105 (1983), 183--191.

\bibitem{Kasprzyk}
A.~Kasprzyk. Toric Fano 3-folds with terminal singularities. \emph{Tohoku Math. J.} (2) 58 (2006), 101--121.

\bibitem{Kasprzyk-database}
A.~Kasprzyk, Graded ring database, \url{http://grdb.lboro.ac.uk}.

\bibitem{KasprzykNill}
A.~Kasprzyk, B.~Nill. Fano polytopes. 
In: \emph{Strings, Gauge Fields, and the Geometry Behind - The Legacy of Maximilian Kreuzer};
edited by: Anton Rebhan, Ludmil Katzarkov, Johanna Knapp, Radoslav
Rashkov, Emanuel Scheidegger; World Scientific, 2012.



\bibitem{Matsumoto-et-al}
Y.~Matsumoto, S.~Moriyama, H.~Imai, and D.~Bremner.
\newblock Matroid Enumeration for Incidence Geometry
\newblock \emph{Discrete Comput.~Geom.}, 47:1 (2012), 17--43.


\bibitem{NillZiegler}
B.~Nill and G.~M.~Ziegler.
\newblock Projecting lattice polytopes without interior lattice points.
\newblock {\em Math. Oper. Res.}, 36:3 (2011), 462--467.

\bibitem{Pikhurko}
O.~Pikhurko, Lattice points in lattice polytopes, {\em Mathematika} 48 (2001), 15--24.

\bibitem{Reznick-clean}
B.~Reznick.
\newblock Clean lattice tetrahedra, 
preprint, June 2006, 21~pages, {\tt arXiv:math/0606227}.

\bibitem{Reznick-favorite} 
B.~Reznick.
\newblock Problem 3 in ``Let me tell you my favorite lattice-point problem'' in
\newblock {\em Integer Points in Polyhedra---Geometry, Number Theory, Representation Theory, Algebra, Optimization, Statistics},
M.~Beck, C.~Haase, B.~Reznick, M.~Vergne, V.~Welker, and R.~Yoshida (eds.).
Contemp.~Math.~{ 452}, Amer.~Math.~Soc.~2008.


\bibitem{Scarf}
H.~E. Scarf.
\newblock Integral polyhedra in three space.
\newblock {\em Math. Oper. Res.} 10:3 (1985), 403--438.

\bibitem{White}
G.~K.~White.
\newblock Lattice tetrahedra.
\newblock {\em Canadian J.\ Math.} 16 (1964), 389--396.

\bibitem{Ziegler}
G.~M.~Ziegler.%
\newblock
\emph{ Lectures on polytopes.}~Graduate Texts in Mathematics,~152.~Springer-Verlag, 1994.


\end{thebibliography}
\end{document}